\providecommand{\customgenericname}{}
\newcommand{\newcustomtheorem}[2]{%
  \newenvironment{#1}[1]
  {%
   \renewcommand\customgenericname{#2}%
   \renewcommand\theinnercustomgeneric{##1}%
   \innercustomgeneric
  }
  {\endinnercustomgeneric}
}
\numberwithin{equation}{section}
\theoremstyle{plain}
\newtheorem{theorem}{Theorem}[section]
\newtheorem{lemma}[theorem]{Lemma}
\newtheorem{corollary}[theorem]{Corollary}
\newtheorem{proposition}[theorem]{Proposition}
\theoremstyle{definition}
\newtheorem{assumption}[theorem]{Assumption}
\newtheorem{remark}[theorem]{Remark}
\newtheorem*{theorem*}{Theorem}
\newtheorem*{question}{Question}
\newcommand{\bC}{\mathbb{C}}
\newcommand{\bR}{\mathbb{R}}
\newcommand\cA{\mathcal{A}}
\newcommand\cL{\mathcal{L}}
\newcommand\cS{\mathcal{S}}
\newcommand\tA{\widetilde{A}}
\newcommand\tr{\operatorname{tr}}
\DeclareMathOperator\supp{supp}
\newcommand{\abs}[1]{\lvert#1\rvert}
\newcommand{\norm}[1]{\left\|#1\right\|}
\newcommand{\inner}[1]{\left \langle#1\right \rangle}
\newcommand{\parenthese}[1]{\left(#1\right)}
\renewcommand{\vec}[1]{\boldsymbol{#1}}
\def\XXint#1#2#3{{\setbox0=\hbox{$#1{#2#3}{\int}$}
		\vcenter{\hbox{$#2#3$}}\kern-.5\wd0}}
\newcommand{\p}{\partial}
\newcommand{\epsi}{\varepsilon}
\begin{document}

\title[Unique continuation]{On the uniqueness of variable coefficient Schr\"odinger equations}

\author[Federico, Li and Yu]{Serena Federico, Zongyuan Li and Xueying Yu}

\address{Serena Federico
\newline \indent Dipartimento di Matematica, Universit\`a di Bologna
\newline \indent Piazza di Porta San Donato 5, 40126, Bologna, Italy}
\email{serena.federico2@unibo.it}

\address{Zongyuan Li
\newline \indent Department of Mathematics, Rutgers University, \indent 
\newline \indent 110 Frelinghuysen Road, Piscataway, NJ 08854-8019, USA}
\email{zongyuan.li@rutgers.edu}


\address{Xueying  Yu
\newline \indent Department of Mathematics, Oregon State University\indent 
\newline \indent  Kidder Hall 368
Corvallis, OR 97331, USA \indent 
\newline \indent 
And 
\newline \indent 
Department of Mathematics, University of Washington, 
\newline \indent  C138 Padelford Hall Box 354350, Seattle, WA 98195, USA}

\email{xueying.yu@oregonstate.edu}

\subjclass[2020]{35B60, 35B45, 35Q41}
\keywords{Schr\"odinger equations with variable coefficients, Unique continuation, Carleman inequality, Logarithmic convexity}

\begin{abstract}
We prove unique continuation properties for linear variable coefficient Schr\"odinger equations with bounded real potentials. Under certain smallness conditions on the leading coefficients, we prove that solutions decaying faster than any cubic exponential rate at two different times must be identically zero. Assuming a transversally anisotropic type condition, we recover the sharp Gaussian (quadratic exponential) rate in the series of works by Escauriaza-Kenig-Ponce-Vega \cite{EKPV_CPDE, EKPV_JEMS, EKPV_Duke}. 
\end{abstract}

\maketitle

\setcounter{tocdepth}{1}
\tableofcontents

\parindent = 10pt     
\parskip = 8pt

\section{Introduction}
In this paper, we consider Schr\"odinger equations with variable coefficients of the form
\begin{equation}\label{eqn-220312-0505}
	\p_t u = i (\cL + V)\,  u,\quad (t,x)\in[0,1]\times \bR^n.
\end{equation}
Here
\begin{equation}\label{eqn-220517-0913}
	\cL = \cL (x) = \sum_{j,k=1}^n\p_k(a_{kj}(x)\p_j),
\end{equation}
with $A(x) = (a_{kj} (x))_{k,j = 1, \cdots ,n}$ being a real symmetric matrix. Throughout this paper, we assume the boundedness and ellipticity 
\begin{equation} \label{eqn-221030-0836}
	\exists\lambda,\Lambda>0, \,\, \forall x,\xi\in\mathbb{R}^n, \quad \lambda|\xi|^2 \leq \sum_{j,k=1}^na_{jk}(x)\xi_j \xi_k\leq \Lambda |\xi|^2.
\end{equation}
Due to the symmetry of $A(x)$, the operator $\cL$ is self-adjoint, that is $\cL=\cL^*$. 
In our main results, the potential is assumed to be real valued and bounded, that is, $V\in L^\infty(\mathbb{R}^n, \mathbb{R})$. 
Moreover, we will often omit the dependence on $x$ and simply write $A, a_{jk}$, and $V$, in place of $A(x),a_{jk}(x)$, and $V(x)$.

The aim of this work is to find the (minimal) assumptions on the variable coefficients $a_{kj}$ such that
\begin{center}
\it if both $u(0,x)$ and $u(1,x)$ decay fast enough, then $u\equiv 0$.
\end{center} 
These kinds of problems, dealing with conditions ensuring the existence of an identically vanishing solution (or, equivalently,  of a unique solution), are usually known under the name of ``unique  continuation problems''.
Before stating our main results, let us discuss the background of such unique continuation problems and give some physical motivation for   variable coefficient Schr\"odinger equations.

\subsection{Background}
\subsubsection{History of the study on the unique continuation question}

Unique continuation type of questions on partial differential equations (PDEs) has been receiving lots of attention since last century. 
The first unique continuation problem for two dimensional elliptic equations on bounded domains traces back to the work of Carleman \cite{Carleman}, where the so called  ``Carleman estimates'' appeared for the first time. In this direction, people study local unique continuation problems across a smooth oriented hypersurface --- whether solutions vanishing on one side of the surface have to vanish on the other side locally. Such property is in the spirit of Holmgren's uniqueness theorem, in smooth instead of analytic settings. H\"ormander (see \cite{Ho4} and references therein) showed that, given a (smooth) partial differential operator $P$, and a (smooth) hypersurface $\Sigma$ being {\em strongly pseudoconvex} with respect to $P$, then unique continuation properties hold (locally) across $\Sigma$. H\"ormander's theory was later adapted by Isakov and Tataru \cite{IV_JDE, MR1472300} for anisotropic operators including variable-coefficient parabolic and Schr\"odinger operators. In particular, their theories indicate that solutions vanishing on a proper (large enough) space-time open subset of $\bR\times \bR^n$ must be identically zero everywhere.

In the current work, we are interested in a global unique continuation problem. For (variable or constant-coefficient, linear or nonlinear) Schr\"odinger equations, in view of the time-reversibility, one asks:
\begin{question}
Assume two solutions $u_1$ and $u_2$ are close to each other at two different times, say $t=0, 1$. Can we conclude that $u_1 \equiv u_2$?
\end{question}
Such problem was studied for constant-coefficient nonlinear Schr\"odinger equations by Zhang \cite{Z_NLS}, Bourgain \cite{MR1443322}, and Kenig-Ponce-Vega \cite{KPV_CPAM}. Eventually in \cite{IK1, IK2}, Ionescu and Kenig proved that for nonlinear Schr\"odinger equation with constant-coefficient leading terms
\begin{equation*}
    (i\p_t + \Delta) u = G(t, x, u, \bar{u}, \nabla_x u ,\nabla \bar{u})\quad \text{on} \,\, [0,1] \times \bR^n ,
\end{equation*}
and under proper conditions on $G$, if two (sufficiently regular) solutions $u_1$ and $u_2$ are such that $u_1-u_2$ is supported inside some ball $B_R$ at two different times $t=0,1$, then they must be exactly the same.

Later, motivated by Hardy's uncertainty principle \cite{Hardy}, Escauriaza-Kenig-Ponce-Vega replaced the compact support assumption by fast (spatial) Gaussian decay. In a series of papers \cite{EKPV_CPDE, EKPV_MRL, EKPV_JEMS, KPV_CPAM, EKPV_Duke, CEKPV, KPV_MRL, EKPV_JLMS}, they discussed linear constant-coefficient equations with general potentials
\begin{align}\label{eq LS}
 i\partial_t u +  \Delta u =  V(t,x) u, \quad (t,x) \in [0,1] \times \bR^n.
\end{align}
For completeness, we present the statement of their  unique continuation result for classical Schr\"odinger equations with the sharpest decay assumption of this type. This result will be recalled and compared to our result in later sections. 
\begin{customthm}{A}[Theorem 1 in \cite{EKPV_Duke}]\label{thm Sharp}
Assume that $u \in C([0,1], L^2 (\bR^n))$ solves \eqref{eq LS}. Then, under proper boundedness and decay assumptions on $V$, if for $A,B >0$ with $AB > 1 /16$, we have
\begin{equation} \label{eqn-221103-0558}
    \norm{ e^{A \abs{x}^2} u(0,x)}_{L^2 (\bR^n)} + \norm{e^{B \abs{x}^2} u(1,x)}_{L^2 (\bR^n)} < \infty ,
\end{equation}
then $u \equiv 0$. 
\end{customthm}
As a corollary, by considering the  difference equation of the two nonlinear solutions, one obtains similar results for the corresponding nonlinear equations. To motivate such result and see the sharpness of the decay assumption \eqref{eqn-221103-0558}, one considers the free Schr\"odinger flow (that is \eqref{eq LS} with $V=0$)
\begin{align}\label{eq FS}
e^{it\Delta} u_0 = (4 \pi it)^{-n/2} \int_{\bR^n} e^{\frac{i\abs{x-y}^2}{4t}} u_0 (y) \, dy = (2 \pi it)^{-n/2} e^{\frac{i \abs{x}^2}{4t}} \widehat{e^{\frac{i\abs{\cdot}^2}{4t}} u_0} \left(\frac{x}{2t}\right) .
\end{align}
On the one hand, if a solution starts from a Gaussian initial data $u|_{t=0}$, then so is $u|_{t=1}$. On the other hand, solutions to free Schr\"odinger equations cannot have both $u|_{t=0}$ and $u|_{t=1}$ simultaneously decaying at fast Gaussian rates, by Hardy's uncertainty principle.
\begin{customthm}{B}[Hardy's uncertainty principle in \cite{Hardy}]\label{thm Hardy}
 For any function $f : \bR \to \bC$, if the function $f$ itself and its Fourier transform $\widehat{f}$ satisfy 
 \begin{align}
 f (x) = \mathcal{O} (e^{- A x^2}) \quad   \text{and} \quad \widehat{f} (\xi) = \mathcal{O} (e^{- 4B \xi^2})
 \end{align}
with $A,B >0$ and $A B >1/16$, then $f \equiv 0$. 
Moreover, if $A =B =1/4$, then $f (x) =Ce^{- x^2/4}$.
\end{customthm}
In turn, our previous remark about free solutions with Gaussian decay, amounts to a Hardy uncertainty principle for Schr\"odinger equations.
In view of this, Theorem \ref{thm Sharp} can be regarded as a perturbation of Hardy's uncertainty principle for free Schr\"odinger equations, with a zeroth order perturbation term $Vu$.

Similar results for \eqref{eq LS} with gradient terms was proved by Dong-Staubach in \cite{MR2299492}. Unique continuation results were also proved for other dispersive models including discrete Schr\"odinger equations \cite{BV, JLMP}, KdV equations \cite{EKPV_JFA}, and higher order Schr\"odinger equations \cite{HHZ, LY}. 
Finally we would like to mention that there is a research line on Hardy's uncertainty principles for Schr\"odinger equations with magnetic potential. In this direction we refer the interested readers to
\cite{BFGR, CF, BCF} and references therein.
In the current paper, we are interested in perturbations in the leading terms, i.e., linear Schr\"odinger equations with variable coefficients $a_{jk}$.

\subsubsection{Motivation for variable coefficient extensions of Schr\"odinger equations}

The nonlinear Schr\"odinger equation (NLS) is an important model arisen in  physics and it plays a fundamental role in nonlinear optics and many other areas of physics. As the NLS equation allows for soliton solutions, optical solitons are regarded as an important alternative to the next generation of ultrafast optical telecommunication systems. In fact, the problem of soliton control in the nonlinear systems is governed by the NLS equation with variable coefficients, where the variable coefficients represent the group velocity dispersion \cite{HLLXZ}. Further physical motivations of variable coefficient versions of dispersive equations can be found, for example, in \cite{OG, Bis, Ge, KT} and the references therein. In particular, for Schr\"odinger equations with time variable coefficients connected to soliton control, we refer to \cite{FS1,FR, FS2} and references therein.

We would like to emphasize that in the last decades important progress have been made in the study of
Schr\"odinger equations with variable coefficients. Local well-posedness results for semilinear and nonlinear initial value problems have been established thanks to the derivation of the fundamental Strichartz and smoothing estimates. 
For Schr\"odinger equations with the Laplacian replaced by its compactly supported perturbation, Strichartz estimates were proved to be valid in \cite{ST}. This result was later generalized in the case of asymptotically flat perturbations of the Laplacian in \cite{MR2193021} (see also \cite{MR3198586} and references therein). Both the aforementioned results were proved under the so called nontrapping condition (for details about this condition see, for instance, \cite{MR1373768}). Other results about Strichartz estimates in the asymptotically flat case can be found in \cite{MR2565717}, where some nontrapping situations are studied as well.
Homogeneous smoothing estimates for Schr\"odinger equations with space-variable coefficients were proved in \cite{MR1373768} and \cite{MR1361016}. Inhomogeneous smoothing estimates, that is with a gain of one derivative, under the nontrapping condition, were first proved in \cite{MR2166312}. 
Recently, the previous estimates have been investigated also for some time-degenerate variable coefficient Schr\"odinger operators. In this setting, weighted smoothing estimates (both homogeneous and inhomogeneous) and weighted Strichartz estimates were established in \cite{FS1} and \cite{FR} respectively.

Motivated by the variable coefficient extensions of Schr\"odinger equations and the series of works \cite{EKPV_CPDE, EKPV_MRL, EKPV_JEMS, KPV_CPAM, EKPV_Duke, CEKPV, KPV_MRL, EKPV_JLMS}, we would like to investigate the  analogue of unique continuation results for variable coefficient Schr\"odinger equations.

\subsection{Main results}
We now present our main results.
\begin{theorem} \label{thm-220816-1210} 
Let $u\in C^0([0,1], L^2(\bR^n))$ be a solution to \eqref{eqn-220312-0505} with
\begin{equation*}
    A\in C^3(\mathbb{R}^n), \, V=V(x) \in L^\infty(\mathbb{R}^n,\mathbb{R}).
\end{equation*}
Then there exists a small positive number $\epsi_0 = \epsi_0(n,\lambda, \Lambda)$, such that if
\begin{equation} \label{eqn-221104-0447}
     \sup_{\bR^n}  |x||\nabla A| \leq \epsi_0
\end{equation}
and
\begin{equation*}
    e^{k|x|^3}|u(0,x)|,\,\, e^{k|x|^3}|u(1,x)|\in L^2_x(\bR^n),\quad \forall k>0,
\end{equation*}
then $u\equiv 0$.
\end{theorem}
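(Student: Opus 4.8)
The plan is to follow the Escauriaza--Kenig--Ponce--Vega strategy of combining two ingredients: a \emph{logarithmic convexity (Carleman-type) estimate} producing quantitative lower bounds on $\|u(t)\|_{L^2(B_R)}$, and \emph{a priori upper bounds} showing that Gaussian-type decay at $t=0,1$ propagates, with suitable loss, to intermediate times. The twist here is that we only have \emph{cubic} exponential decay and variable leading coefficients satisfying the smallness condition \eqref{eqn-221104-0447}. First, I would reduce to a normalized situation: after a linear change of variables we may assume $A(0)=I$; condition \eqref{eqn-221104-0447} then forces $A(x)=I+O(\epsi_0)$ globally with $|x||\nabla A|\le\epsi_0$, so that $\cL$ is a small, slowly-varying perturbation of $\Delta$, and the lower-order terms generated when we conjugate by a weight $e^{\phi}$ stay controlled. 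I would also reduce to showing that $u\equiv 0$ in a neighborhood of $t=1/2$ (say), since once $u$ vanishes on a time slice, the backward/forward uniqueness for \eqref{eqn-220312-0505} (itself a consequence of the same convexity estimate, or of Isakov--Tataru type results cited in the introduction) propagates it to all of $[0,1]$.

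The core is a Carleman inequality of the form
\begin{equation*}
    \alpha^{3/2}\big\| e^{\alpha|x/\rho+\text{(drift)}|^2}\, g\big\|_{L^2([0,1]\times\bR^n)}
    \le C\big\| e^{\alpha|x/\rho+\text{(drift)}|^2}\,(\p_t - i(\cL+V))g\big\|_{L^2([0,1]\times\bR^n)},
\end{equation*}
valid for $g$ compactly supported in $x$, uniformly in large $\alpha$ and in a parameter $\rho$. For the constant-coefficient case with a Gaussian weight this is exactly the EKPV mechanism; here, because $A$ is variable, I expect the right weight to be $\phi(x)=\alpha\,\varphi(x/\rho)$ where $\varphi$ is adapted to the metric $A^{-1}$ (so that the bicharacteristic/convexity computation closes), and the cubic tail in the hypothesis is what is needed to absorb the errors coming from $|x||\nabla A|\le\epsi_0$ — roughly, commuting $\cL$ past $e^{\phi}$ produces terms like $(\nabla A)\cdot(\nabla\phi)$ which on the support of a cubic weight scale one power worse than in the flat case, and smallness of $\epsi_0$ lets these be absorbed into the good (positive commutator) term. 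I would split the proof into: (i) the pointwise operator identity $\p_t - i(\cL+V) = S + \mathcal{A}$ with $S$ symmetric and $\mathcal{A}$ antisymmetric after conjugation, (ii) a positivity/convexity lemma $\tfrac{d^2}{dt^2}\log\|e^{\phi}g\|^2 \ge -C$ modulo the commutator $\langle[\mathcal{A},S]g,g\rangle\gtrsim \alpha^3\|\,\cdot\,\|^2$, where smallness of $\nabla A$ keeps the metric contribution subordinate, and (iii) integrating out to get the displayed inequality.

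Given the Carleman inequality, the endgame is standard: take cutoffs $\theta_R(x)$ equal to $1$ on $|x|\le R$ and supported in $|x|\le 2R$, apply the estimate to $g=\theta_R u$, note $(\p_t-i(\cL+V))g$ is supported in the annulus $R\le|x|\le 2R$ where one can use the hypothesis $e^{k|x|^3}u(t)\in L^2$ (after the a priori propagation step) to bound the right side by $C e^{C\alpha/\rho^2}e^{-kR^3}$ times a constant, while the left side is bounded below by $\alpha^{3/2}e^{c\alpha R^2/\rho^2}\|u\|_{L^2([1/4,3/4]\times B_{R/2})}$; optimizing in $\alpha$ and $\rho$ (choosing $\rho\sim R^{1/?}$ so the exponents balance) and letting $R\to\infty$ forces $u\equiv 0$ on a central time interval. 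The main obstacle I anticipate is precisely \textbf{item (ii)}: making the positive commutator computation for the variable-coefficient operator $\cL$ close with the \emph{cubic} Gaussian-in-$x$ weight, i.e.\ checking that every error term carrying a derivative of $A$ is either lower order in $\alpha$ or comes with a factor $|x||\nabla A|\le\epsi_0$ that can be absorbed — this is where the exact shape of the weight $\varphi$ and the smallness threshold $\epsi_0(n,\lambda,\Lambda)$ get pinned down, and it is also the step that explains why one cannot reach the sharp quadratic rate without the extra transversally-anisotropic hypothesis mentioned in the abstract.
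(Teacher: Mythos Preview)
Your overall architecture --- Carleman lower bound on annuli versus propagated decay upper bound, then contradiction --- is exactly the paper's. Two substantive corrections, though. First, the Carleman weight here is $\phi=\beta(|x/R|^2+\varphi(t))$ with an \emph{additive} time cutoff $\varphi\in C_c^\infty((1/8,7/8))$; the translated weight $|x/\rho+\text{(drift)}|^2$ you wrote is the device for the transversally anisotropic Theorem~\ref{thm-221005-0218}, and no metric-adapted weight is used --- the Euclidean $|x|^2$ suffices. Second, your explanation of the cubic exponent is off: the $\nabla A$ commutator errors either carry a factor $|x||\nabla A|$ (killed directly by $\epsi_0$-smallness) or are lower order and need only $\beta\gtrsim R^2$. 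What actually forces $\beta\gtrsim R^3$ is the zeroth-order term $\beta\varphi''|f|^2$ coming from the time cutoff; it can be absorbed into the good term $\beta^3R^{-6}|x|^2|f|^2$ only via the support condition $|x|\ge r_0$, giving $\beta^2\gtrsim R^6/r_0^2$. This yields the lower bound $\delta(R)\ge Ce^{-C_0R^3}$, hence the cubic hypothesis. (The $x_1$-translated weight in Theorem~\ref{thm-221005-0218} is designed precisely to dodge this $\varphi''$ obstruction, which is why that result reaches the Gaussian rate.)

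The propagation step is also more involved than ``Gaussian-type decay propagates with loss.'' The log-convexity computation for $\|e^{\beta|x|^2}u(t)\|_{L^2_x}^2$ requires Gaussian decay of $u$ and $\nabla u$ at \emph{intermediate} times to justify the integrations by parts, and this is not known a priori. The paper resolves this by a parabolic regularization $u_\epsi=e^{t\epsi(\cL+V)}u$ (here is where $V$ real-valued and time-independent is actually used), proves the convexity for $u_\epsi$ via dissipative decay estimates, and sends $\epsi\to0$. The resulting quadratic-weight log-convexity is then upgraded to the cubic-weight persistence you need not by rerunning the argument with a cubic weight, but by a subordination identity: integrating $e^{2\beta|x|^2-c\beta^q}\beta^{(q-2)/2}$ in $\beta$ over $(\beta_0,\infty)$ reproduces $e^{c'|x|^{2\alpha}}$ for the conjugate exponent $\alpha$. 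Your outline glosses over both of these mechanisms.
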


In the second result, we assume the following transversally anisotropic type condition on the leading term coefficients matrix $A(x)$: under a suitable choice of coordinates, for $x = (x_1, x') \in \bR\times \bR^{n-1}$,
	\begin{equation}
	\label{eq Structure}
		A=A(x_1, x')=
		\begin{pmatrix}
			a_{11}(x_1) & 0\\0&\widetilde{A}(x')
		\end{pmatrix},
		\quad \text{where} \,\, \widetilde{A} \,\,\text{is an $(n-1)\times(n-1)$ symmetric matrix}.
	\end{equation}

\begin{theorem} \label{thm-221005-0218}
    Let $u \in C^0([0,1] , L^2 (\bR^n) )$ be a solution to \eqref{eqn-220312-0505} with the above transversally anisotropic type  condition \eqref{eq Structure},
    \begin{equation*}
	a_{11} \in C^2(\bR), 
	\quad \widetilde{A} \in C^3 (\bR^{n-1}),
	\quad V=V(x)\in L^\infty(\mathbb{R}^n,\mathbb{R}).
\end{equation*}
Then there exists a small positive number $\epsi_0 = \epsi_0(n,\lambda, \Lambda)$ and a large number $k=k(n,\lambda, \Lambda, \|a_{11}\|_{C^2}, \| \tilde{A} \|_{C^3})$, such that if
\begin{equation*}
     \sup_{\bR^{n-1}} |x'||\nabla_{x'} \widetilde{A}| \leq \epsi_0,
\end{equation*}
and
	\begin{equation}\label{eqn-220521-0623}
		e^{k|x|^2}|u(0,x)|,\,\, e^{k|x|^2}|u(1,x)|\in L^2_x(\bR^n),
	\end{equation}
then $u\equiv 0$.
\end{theorem}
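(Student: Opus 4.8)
The plan is to flatten the $x_1$-direction by a one-dimensional change of variables and then run the Escauriaza--Kenig--Ponce--Vega logarithmic-convexity scheme with a Gaussian weight that \emph{splits} along the block decomposition \eqref{eq Structure}; the structural assumption is exactly what decouples the flat direction, where the clean quadratic weight survives, from the variable direction, where only the scale-invariant smallness is available. First I would write $\cL = \partial_1(a_{11}(x_1)\partial_1) + \widetilde{\cL}$ with $\widetilde{\cL} = \sum_{j,k=2}^{n}\partial_k(\widetilde a_{kj}(x')\partial_j)$, and conjugate the scalar operator $\partial_1(a_{11}\partial_1)$ to $\partial_{y_1}^2$ plus a bounded real potential via the Liouville substitution $y_1 = \int_0^{x_1} a_{11}(s)^{-1/2}\,ds$ together with a multiplier $v = \mu(x_1)\,u$. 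The hypothesis $a_{11}\in C^2(\bR)$ is precisely what keeps the new potential in $L^\infty$, while the ellipticity \eqref{eqn-221030-0836} gives $c_1|x_1|\le|y_1|\le c_2|x_1|$ and bounds $\mu$ above and below up to a factor $e^{\pm C|y_1|}$, so that the endpoint hypothesis \eqref{eqn-220521-0623} is preserved after enlarging $k$ and completing a square; since $\widetilde{\cL}$ differentiates only $x'$, it passes through untouched, and the equation becomes $\partial_t v = i\big(\partial_{y_1}^2 + \widetilde{\cL} + \widetilde V\big)v$ with $\widetilde V\in L^\infty(\bR^n,\bR)$ and \emph{no} mixed $y_1$--$x'$ derivatives. (Equivalently one may keep $a_{11}$ and work with the one-dimensional weight built from $\int_0^{x_1}a_{11}^{-1/2}$; the reduction only streamlines the bookkeeping.)

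Next I would establish a space--time Carleman inequality on $[0,1]\times\bR^n$ with weight $e^{\varphi}$, where $\varphi(y_1,x',t)=\varphi_1(y_1,t)+\varphi_2(x',t)$ and each $\varphi_i$ is a Gaussian weight of the type in \cite{EKPV_CPDE, EKPV_JEMS, EKPV_Duke}, comparable to $\gamma(t)|\cdot|^2$ with $\gamma(t)$ degenerating like $[t(1-t)]^{-1}$ at the endpoints and sharing a common temporal profile, so that $e^{\varphi}\approx e^{\gamma(t)|y|^2}$. Conjugating $e^{\varphi}\big(\partial_t - i\cL - i\widetilde V\big)e^{-\varphi}$ and performing the symmetric/antisymmetric splitting, the absence of mixed derivatives makes the $y_1$- and $x'$-computations independent: on the $y_1$ slot one recovers exactly the commutator positivity of \cite{EKPV_CPDE, EKPV_JEMS, EKPV_Duke} for the flat operator, with no coefficient error; on the $x'$ slot the analogous computation produces, besides the positive terms, error terms dominated pointwise by $\big(\sup_{\bR^{n-1}}|x'||\nabla_{x'}\widetilde A|\big)$ times the positive terms, together with $\gamma(t)$-weighted lower-order quantities whose coefficients are controlled by $\|\widetilde A\|_{C^3}$, all of which are absorbable once $\epsi_0=\epsi_0(n,\lambda,\Lambda)$ is small and $\gamma$ is suitably large; the zeroth-order term $\widetilde V$ and the time derivative of $\varphi$ are treated as in the constant-coefficient case. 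Applying this inequality (after a standard cutoff and approximation, the cutoff errors being negligible thanks to the Gaussian decay) upgrades the endpoint control \eqref{eqn-220521-0623} to a uniform interior bound $e^{c_0|y|^2}v\in L^\infty([0,1],L^2(\bR^n))$ for some $c_0>0$.

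Finally I would carry out the logarithmic-convexity and lower-bound step. With the same splitting one shows that $H(t):=\norm{e^{\gamma(t)\abs{\cdot}^2}v(t,\cdot)}_{L^2}^2$ is logarithmically convex up to an error absorbed using $\epsi_0$, $\|a_{11}\|_{C^2}$ and $\|\widetilde A\|_{C^3}$ (this is where the largeness of $k$ is spent), and complements it with an interior lower bound: if $v\not\equiv0$, an $L^2$ lower bound propagated by the equation --- or, alternatively, the local unique continuation results of Isakov--Tataru \cite{IV_JDE, MR1472300}, which prevent $v$ from vanishing on any space--time open set --- yields $H(t)\gtrsim1$ on a compact sub-interval of $(0,1)$. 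The log-convexity of $H$, the endpoint smallness coming from \eqref{eqn-220521-0623}, and this interior lower bound are mutually contradictory once $k$ is sufficiently large, whence $v\equiv0$ and therefore $u\equiv0$.

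The step I expect to be the main obstacle is the Carleman inequality: verifying that the block-diagonal hypothesis \eqref{eq Structure} genuinely decouples the conjugated operator, so that the flat $y_1$-direction retains the sharp quadratic positivity while the variable $x'$-direction contributes only $\epsi_0$-small (and $C^3$-controlled) errors, \emph{with a genuinely quadratic weight} --- a decoupling that is unavailable in the fully variable-coefficient setting of Theorem~\ref{thm-220816-1210}, which is why only the cubic rate is reached there. A secondary technical point is arranging the temporal profiles of $\varphi_1$ and $\varphi_2$ to be mutually compatible, so that the total weight stays comparable to $e^{\gamma(t)|y|^2}$ and matches the hypothesis \eqref{eqn-220521-0623} exactly.
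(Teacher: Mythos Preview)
Your reduction in the first step matches the paper's exactly: the paper also flattens the $x_1$-direction via the Liouville change of variables $y_1=\int_0^{x_1}a_{11}^{-1/2}$ together with a gauge transform, reducing to $a_{11}\equiv\mathrm{const}$ (Assumption~\ref{ass-220517-1100}), and then re-verifies the log-convexity of Section~\ref{sec-220517-1106} under the transversally anisotropic structure (Theorem~\ref{thm-sharp-log}).  Your Step~2 is a reasonable variant of that persistence step.

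The gap is in your Step~3.  The contradiction requires a \emph{quantitative} spatial lower bound of the form
\[
\int_{1/8}^{7/8}\!\!\int_{B_R\setminus B_{R-1}}\bigl(|v|^2+|\nabla v|^2\bigr)\,dx\,dt \;\ge\; C_1\,e^{-C_0 R^2}
\]
for all large $R$, with $C_0$ depending only on the coefficients; only then does the persistence-of-decay upper bound (which is $O(e^{-2kR^2})$) contradict it once $k>C_0$.  Your ``interior lower bound'' via $L^2$-conservation or local unique continuation gives only $\|v(t)\|_{L^2}>0$ on a sub-interval, which says nothing about the spatial tail and cannot be played against log-convexity: the latter is an \emph{upper} bound $H(t)\le C\,H(0)^{1-t}H(1)^t$, and $H(t)\gtrsim 1$ is perfectly compatible with $H(0),H(1)<\infty$.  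If instead you take $\gamma(t)\to\infty$ at the endpoints, then under \eqref{eqn-220521-0623} the quantities $H(0),H(1)$ are generically infinite, so there is no ``endpoint smallness'' to exploit either.

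The idea you are missing is the paper's Lemma~\ref{lem-220516-1205}: after the Liouville reduction the operator is invariant under $x_1$-translations, so one may use the \emph{translated} Carleman weight $\phi=\beta\,\bigl|x/R+\varphi(t)\vec e_1\bigr|^2$ in place of the general weight $\beta\bigl(|x/R|^2+\varphi(t)\bigr)$ of Lemma~\ref{lem-220816-1207}.  With the general weight, $\partial_{tt}\phi=\beta\varphi''$ carries no compensating spatial factor; absorbing it against the positive term $\beta^3R^{-6}|x|^2$ forces a cutoff away from the origin and $\beta\gtrsim R^3$, yielding only the cubic lower bound.  With the translated weight, $\partial_{tt}\phi$ carries a factor $(x_1/R+\varphi)$ already present in the positive commutator term $\beta^3R^{-4}|x/R+\varphi\vec e_1|^2$, and the support condition becomes $|x/R+\varphi\vec e_1|\ge 1$, so the inner cutoff $\theta_{(\varepsilon)}$ is no longer needed.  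The upshot is that $\beta\gtrsim R^2$ suffices, and the resulting Carleman estimate yields the sharp Gaussian lower bound (Theorem~\ref{thm-220513-1127}), which then contradicts the persistence of $e^{k|\cdot|^2}$-decay once $k$ is large.  Your split weight $\varphi_1(y_1,t)+\varphi_2(x',t)$ captures the block structure but not this translation trick; without it you will not improve on the cubic rate of Theorem~\ref{thm-220816-1210}.
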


\begin{remark}
\begin{enumerate}[(1)]
\item
With merely the decay, size, and smoothness conditions on $A$, currently in Theorem \ref{thm-220816-1210}, we obtain the uniqueness under cubic exponential decays. The sharp Gaussian decay rates as in the constant coefficient case are recovered in Theorem \ref{thm-221005-0218}, by assuming \eqref{eq Structure}. Similar conditions also appear in the study of inverse problems under the name of conformally transversally anisotropic condition, cf. \cite[Definition~1.5]{DKSU} and \cite{DKLS}.

\item
In the current paper, we assume the potential $V$ to be real-valued, depending only on $x$. This is needed when we take ``parabolic flow approximations'' in our proof. Actually, we expect that a certain perturbation $V=V_1(x) + V_2(t,x)$ with $V_1$ being real-valued and $V_2$ having fast decay is allowed.
\item As is commonly known, due to the nontrivial behavior of the Hamiltonian flow, the setting of Schr\"odinger equations with variable coefficients is much more complicated to study than the classical constant coefficients one, hence less favorable. Our main assumption \eqref{eqn-221104-0447} is comparable to those in \cite{MR2565717, MR3198586,MR2193021, Tataru, ST} in the sense that the global smallness prevents the possible trapping of the Hamiltonian flow. Actually in terms of smallness at the spatial infinity, our assumption is slightly weaker than asymptotic flatness. It is interesting to ask whether similar unique continuation results still hold if we merely assume the (weak) asymptotic flatness at the spatial infinity, i.e.,
\begin{equation} \label{eqn-221105-1133}
    \limsup_{|x|\rightarrow \infty} |x||\nabla A| < \infty \quad \text{or} \quad \limsup_{|x|\rightarrow \infty} |x||\nabla A| <\epsi \ll 1,
\end{equation}
instead of \eqref{eqn-221104-0447}. We would like to emphasize that under (suitable) weak asymptotic flatness assumptions, Strichartz and local smoothing estimates have been established; see \cite{MR2565717, MR3198586} and references therein for an in-depth analysis of the problem.
Closely related to our uniqueness problem here -- see also \cite{MR3545929} for the backward uniqueness of variable coefficient parabolic equations -- is the so-called Landis-Oleinik conjecture, where \eqref{eqn-221105-1133} instead of our condition \eqref{eqn-221104-0447} is assumed.

\end{enumerate}
\end{remark}

\subsection{Outline of the proofs and organization of the paper}
The major parts of this paper will be devoted to the proof of Theorem \ref{thm-220816-1210}, which we split into the following main steps.

\begin{enumerate}[(1)]
\item 
{\it Persistence of decays at $t=0,1$.}  

In Section \ref{sec-220517-1106}, we study the evolution of the weighted norm $\|e^{\beta|x|^2} u(t,x)\|_{L^2_x}$. In particular, we show that the logarithm of such quantity is almost convex in time. While proving such $log$-convexity, fast decays at any intermediate times ($0 < t < 1$) is needed in order to employ integration by parts, which we do not know a priori. To this end, we utilize the idea in \cite{EKPV_JEMS} and introduce an ``artificial dissipation". Thanks to the decay and regularity inherited from the parabolic estimate, after a proper limiting argument, we are able to close the $log$-convexity computation. Furthermore, by a subordination type inequality, we can prove that any decay faster than Gaussian at initial and terminal times will be preserved for the intermediate time.

\item
{\it Carleman estimates and lower bounds.} 

In Section \ref{sec Carlman}, we derive an absolute lower bound away from $t=0,1$ for non-trivial solutions. Such lower bound is absolute in the sense that the decay power is independent of the decay at initial/terminal times. The proof is by deriving a Carleman inequality with carefully designed weight function, from which the lower bound is obtained by a usual localization procedure.

As mentioned earlier,  inequalities  of Carleman type are widely used in unique continuation principles, where the criticality (and actually, difficulty) falls on hunting suitable weight functions to cooperate assumptions  to be made  on $A (x)$. 
In fact, in our Theorems \ref{thm-220816-1210} and \ref{thm-221005-0218}, two different weights are employed under two different smallness assumptions on  $A$.

\item
{\it A contradiction argument.}

With the upper and lower bounds obtained in previous steps, in Section \ref{sec Main}, we prove the uniqueness in Theorem \ref{thm-220816-1210}. This is by contradiction: the persistence of decay shows that faster decays at $t=0,1$ lead to faster intermediate time decays, while there is an absolute lower bound away from $t=0$ and $t=1$.

\end{enumerate}
In Section \ref{sec-221005-0114}, we prove Theorem \ref{thm-221005-0218} transversally anisotropic type  condition. The proof is very similar to the general case. We first prove that the $\log$-convexity still holds under the new assumption. Then we prove a Carleman estimate with a better ``translated in $x_1$'' weight. Such weight leads to the sharp Gaussian lower bound, and hence the sharp uniqueness theorem.

Besides these, In Section \ref{sec Pre}, we introduce notations and calculate a general formula for commutators that will be used repeatedly (but with possibly different weights) in the current paper. 
In Appendices \ref{sec AppA}, \ref{sec AppB} and \ref{sec AppC}, some technical lemmas including a subordination-type inequality, a Sobolev inequality, and a parabolic approximation result are proved.

\subsection*{Acknowledgement}
Z. L. was partially supported by an AMS-Simons travel grant. Part of this work was done while the third author was in residence at the Institute for Computational and Experimental Research in Mathematics in Providence, RI, during the Hamiltonian Methods in Dispersive and Wave Evolution Equations program. X.Y. was partially supported by an AMS-Simons travel grant. 

\section{Preliminaries}\label{sec Pre}

In this section, we preform a general computation for the symmetric and antisymmetric operators associated with $e^{\phi} (i\p_t + \cL) e^{-\phi}$ (with a very general weight function $\phi = \phi(t,x)$) and their commutator expansion. This will be repeatedly used in later sections.
\subsection{Notations}\label{sec.notations}
In the remainder of the present article, we write
\begin{align}
\inner{f, g}_{L_x^2} :  = \int_{\bR^n} f (x) \overline{g}(x) \, dx \quad \text{and} \quad \inner{f, g}_{L_{t,x}^2} :  = \int_{\bR} \int_{\bR^n} f (t,x) \overline{g}(t,x) \, dx dt .
\end{align}
When the context is clear, we will simple write $\inner{\cdot, \cdot}$ to keep our notations compact. 

We denote the ball centered at the origin with radius $R$ by $B_R :=\{x \in\mathbb{R}^n:\abs{x} \leq R\}$.

For every multi-index $\alpha\in (\mathbb{N}\cup \{0\})^n$, and for every variable coefficient $(n\times n)$ matrix $A=A(x)=\{a_{jk}(x)\}_{j,k=1,\ldots,n}$, we use the notation 
\begin{align}
|\nabla ^\alpha A|:=\left(\sum_{j,k=1}^n |\partial^\alpha a_{jk}|^2\right)^{1/2} ,
\end{align}
where $\partial^\alpha:=\partial_{x_1}^{\alpha_1}\ldots\partial_{x_n}^{\alpha_n}$. 

For every $k\in(\mathbb{N}\cup \{0\})$, we say that $A\in C^k(\mathbb{R}^n)$ if $a_{jk}\in C^k(\mathbb{R}^n)$ for all $j,k=1,\ldots,n$, and we define
\begin{align}
\|A\|_{C^k}:=\sum_{i=1}^k \sup_{\substack{x\in\mathbb{R}^n,\\ 
\alpha\in (\mathbb{N}\cup \{0\})^n, \, |\alpha|=i}} |\nabla^\alpha A|,\quad \text{where}\quad  |\alpha|=\alpha_1+\cdots+\alpha_n.
\end{align}

Finally, we shall often use the abbreviations
\begin{align}
 \int \,f dx:=\int_{\mathbb{R}^n} f(x)\, dx, \quad  \iint g \, dx dt:=\int_{\mathbb{R}}\int_{\mathbb{R}^n}g(t,x)\,dx dt.
\end{align}

\subsection{General formulas for commutator computation}\label{sub-220817}
Let $\phi=\phi(t,x)$ be a general weight function and $\cL$ be given in \eqref{eqn-220517-0913}. We decompose
\begin{equation} \label{eqn-220815-0801}
    e^{\phi} (i\p_t + \cL) e^{-\phi} = \cS + \cA,
\end{equation}
where
\begin{equation} \label{eqn-220815-0801-1}
\begin{split}
    \cS & := i\p_t + a_{kj}\p_{kj}^2 + (\p_k a_{kj})\p_j + (\p_k \phi)(\p_j \phi)a_{kj},\\
    \cA & := -2 (\p_l\phi)a_{ml}\p_m - (\p_l\phi) (\p_m a_{ml}) - (\p_{ml}^2\phi) a_{ml} - i\p_t\phi .
\end{split}
\end{equation}
are the symmetric and antisymmetric parts with respect to the inner product $\left< \cdot , \cdot \right>_{L^2_{t,x}} $.

A direct computation shows that formally the commutator $ [\cS, \cA]$ is given by
\begin{align}
    [\cS, \cA] =\cS\cA - \cA\cS = T_2 + T_1 + T_{0,1} + T_{0,2},
\end{align}
where
\begin{align}
T_2 & := -4 a_{kj}a_{ml}(\p_{kl}^2\phi) \p_{ml}^2 - 4 a_{kj} (\p_k a_{ml}) (\p_l \phi) \p_{mj}^2 + 2 a_{ml} (\p_m a_{kj}) (\p_l \phi) \p_{kj}^2,\\
    T_1 
    & :=
    -4i a_{ml} (\p_t\p_l\phi)  \p_m - 4 a_{kj} a_{ml} (\p^3_{kjl} \phi) \p_m
    \\&\quad 
    -4a_{kj} (\p_{kl}^2\phi) (\p_j a_{ml}) \p_m -4 a_{ml} (\p^2_{jl}\phi) (\p_k a_{kj}) \p_m - 2 a_{kj} (\p^2_{ml}\phi) (\p_k a_{ml}) \p_j 
    \\&\quad
    - 2 (\p_l\phi) (\p_j a_{ml}) (\p_k a_{kj}) \p_m - 2 a_{kj} (\p_l\phi) (\p^2_{km} a_{ml}) \p_j + 2 a_{ml} (\p_l \phi) (\p^2_{km} a_{kj}) \p_j,
    \\
    T_{0,1} &:= 4 a_{ml} a_{kj} (\p_l\phi) (\p_{km}^2 \phi) (\p_j \phi) + 2 a_{ml}(\p_m a_{kj}) (\p_l \phi) (\p_k\phi) (\p_j \phi),\\
    T_{0,2}
    &:=
    - a_{kj} a_{ml} (\p_{kjml}^4 \phi) - 2 i a_{kj} (\p_{kj} \p_t \phi) + \p_{tt}\phi
    \\&\quad
    - 2i (\p_t \p_l \phi) (\p_m a_{ml}) - 2 a_{kj}(\p^3_{kjl} \phi) (\p_m a_{ml}) - 2 a_{kj} (\p^3_{kml}\phi) (\p_j a_{ml})
    \\&\quad
    - (\p_k a_{kj}) (\p^2_{jl} \phi) (\p_m a_{ml}) - (\p_k a_{kj}) (\p^2_{ml}\phi) (\p_j a_{ml}) - 2 a_{kj} (\p^2_{kl} \phi) (\p^2_{jm} a_{ml}) - a_{kj}(\p^2_{ml}\phi)(\p^2_{kj}a_{ml})
    \\&\quad
    - (\p_k a_{kj})(\p_l \phi) (\p^2_{jm} a_{ml}) - a_{kj}(\p_l\phi) (\p^3_{kjm}a_{ml}).\label{eqn-1103-1042}
\end{align}
Note that here $T_2$, $T_1$ are of order 2 and 1 respectively, while $T_{0,1}$ and $T_{0,2}$ are order 0 terms. 
The (complicated) explicit expressions for $T_1, T_{0,2}$ are only needed in Section \ref{sec-221005-0114}, where some cancellation needs to be verified. Everywhere else, it suffices to work with the following:
\begin{align}\label{eqn-220815-0801-2}  
\begin{aligned}
       T_1 & : = -  4 i (\p_t \p_l \phi) a_{ml}\p_m - 4 a_{kj} a_{ml} (\p_{kjl}^3 \phi) \p_m + O(1) (|\nabla^2\phi| |\nabla A| + |\nabla \phi| |\nabla A|^2 + |\nabla \phi| |\nabla^2 A| )\nabla,\\
    T_{0,2} & :=  - a_{kj} a_{ml} (\p_{kjml}^4 \phi) - 2i a_{kj} (\p_{kj} \p_t \phi) + \p_{tt}\phi \\
    & \quad + O(1) (|\nabla \p_t \phi| |\nabla A| + |\nabla^3 \phi| |\nabla A| + |\nabla^2 \phi| |\nabla A|^2 + |\nabla^2 \phi| |\nabla^2 A| + |\nabla \phi| |\nabla A| |\nabla^2 A| + |\nabla \phi| |\nabla^3 A| ), 
\end{aligned}
\end{align}
where
\begin{equation*}
    \nabla = \nabla_x
\end{equation*}
and $O(1)g$ stands for a function which is bounded from above by $C(\Lambda,n)|g|$.

\section{$Log$-convexity with quadratic weight and persistence of decay}\label{sec-220517-1106}
The goal of this section is to prove that if $u$ solves equation \eqref{eqn-220312-0505}, then the function $H:[0,1]\rightarrow \mathbb{R}$, defined as
\begin{align}
H(t):=\|e^{\beta|x|^2}u(t, x)\|^2_{L^2_{x}},
\end{align}
is $log$-convex. More specifically, we will focus on establishing the following result.
\begin{theorem}\label{thm-220515-0921}
	Let $u\in C([0,1],L^2 (\mathbb{R}^n))$ be a solution to \eqref{eqn-220312-0505} with 
	\begin{align}
	    A\in C^3(\mathbb{R}^n), \quad V=V(x)\,\,\text{being real-valued},\quad  \text{and}\quad M_1 := \|V\|_{L^\infty} < \infty.
	\end{align}
	Then there exist a small enough $\epsi_0 = \epsi_0(n,\lambda, \Lambda) >0$ and a large enough $\beta_0 = \beta_0(n,\lambda, \Lambda, \|A\|_{C^3})$, such that if
\begin{equation}\label{eqn-220512-1132}
    \sup_{\bR^n}|x||\nabla A| \leq \epsi_0
\end{equation}
	and
	\begin{equation}\label{eqn-220704-0504}
		e^{\beta|x|^2}u(0,x), \, e^{\beta|x|^2}u(1,x) \in L^2(\bR^n), \quad \text{with} \,\, \beta>\beta_0,
	\end{equation}
then for all $t\in(0,1)$, we have 
\begin{equation}\label{eqn-220711-0221}
		\|e^{\beta|x|^2}u(t,x)\|_{L^2_x}^2 
		\leq
		C e^{ M_1^2 } (\|e^{\beta|x|^2}u(0,x)\|_{L^2_x}^2)^{1-t} (\|e^{\beta|x|^2}u(1,x)\|_{L^2_x}^2)^t,
	\end{equation}
  and
	\begin{equation}\label{eqn-220711-0222}
	\beta\|\sqrt{t(1-t)}e^{\beta|x|^2}|\nabla u|\|_{L^2_{t,x}}^2 + \beta^3\|\sqrt{t(1-t)}e^{\beta|x|^2}|xu|\|_{L^2_{t,x}}^2 \leq 
	 Ce^{M_1^2} ( \|e^{\beta|x|^2}u(0,x)\|^2_{L^2_{x}} + \|e^{\beta|x|^2}u(1, x)\|^2_{L^2_{x}} ).
	\end{equation}
	Here $C$ is an absolute constant.
	
	In particular, when $A=I_n$, we can take $\beta_0=0$.
\end{theorem}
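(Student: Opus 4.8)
\emph{Approach and reduction.} I would follow the Escauriaza--Kenig--Ponce--Vega logarithmic-convexity scheme, treating the variable coefficients through the commutator identity of Section~\ref{sub-220817}. Set $\phi(x):=\beta\abs{x}^2$ (time-independent) and $v:=e^{\phi}u$. Since $u$ solves $(i\p_t+\cL)u=-Vu$, the conjugated function satisfies $(\cS+\cA)v=-Vv$, where $\cS=i\p_t+\cS_s$ with $\cS_s:=a_{kj}\p^2_{kj}+(\p_k a_{kj})\p_j+(\p_k\phi)(\p_j\phi)a_{kj}$ symmetric on $L^2_x$, and where $\cA$ is purely spatial (because $\p_t\phi=0$). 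Rearranging, $\p_t v=\cR v+\cW v+\cF$ with $\cR:=i\cA$ symmetric and $\cW:=i\cS_s$ skew-symmetric on $L^2_x$, and with a source $\cF:=iVv$ obeying $\norm{\cF(t)}_{L^2_x}\le M_1\norm{v(t)}_{L^2_x}$. Putting $H(t):=\norm{v(t)}_{L^2_x}^2$ and $D(t):=\inner{\cR v,v}_{L^2_x}$, the standard computation — using $\p_t\cR=0$ (both $A$ and $\phi$ are $t$-independent), completing the square in the cross term $\mathrm{Re}\inner{\cR v,\cF}$, and discarding the nonnegative Cauchy--Schwarz defect $\norm{\cR v}^2H-D^2\ge 0$ — gives
\begin{equation*}
H'=2D,\qquad \frac{d^2}{dt^2}\log H \ge \frac{2\inner{[\cS,\cA]v,v}_{L^2_x}}{H} - M_1^2 .
\end{equation*}
So the entire theorem reduces to a lower bound for the commutator $\inner{[\cS,\cA]v,v}_{L^2_x}$.

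\emph{The commutator lower bound.} Inserting $\phi=\beta\abs{x}^2$ into the formulas of Section~\ref{sub-220817}, and using that $\phi$ is quadratic and $t$-independent (so $\p_t\phi=\nabla^3\phi=\nabla^4\phi=0$, $\nabla^2\phi=2\beta I_n$, $\nabla\phi=2\beta x$), two good terms appear: from $T_2$, after one integration by parts, a quantity $\gtrsim\beta\lambda^2\abs{\nabla v}^2$; and from $T_{0,1}$, the term $4a_{ml}a_{kj}(\p_l\phi)(\p^2_{km}\phi)(\p_j\phi)=32\beta^3\abs{Ax}^2\ge 32\beta^3\lambda^2\abs{x}^2$ by ellipticity \eqref{eqn-221030-0836}. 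Every remaining term carries a factor $\abs{\nabla A}$, $\abs{\nabla^2 A}$ or $\abs{\nabla^3 A}$ with the powers of $\beta,\abs{x}$ prescribed by \eqref{eqn-220815-0801-2}; I would absorb all of them into the two good terms, distinguishing three types: (i) a pattern $\abs{x}\abs{\nabla A}$ is $\le\epsi_0$, so the correction is $\le C\epsi_0(\beta\abs{\nabla v}^2+\beta^3\abs{x}^2\abs{v}^2)$, absorbed once $\epsi_0=\epsi_0(n,\lambda,\Lambda)$ is small; (ii) a bare $\abs{\nabla A}$ is $\le\epsi_0/\abs{x}$, and the resulting $\beta\int\abs{\nabla v}\abs{v}/\abs{x}$ is $\le C_n\epsi_0\beta\norm{\nabla v}^2$ by a Hardy-type estimate (the Sobolev lemma of Appendix~\ref{sec AppB}); (iii) a factor $\abs{\nabla^2 A}$ or $\abs{\nabla^3 A}$ is merely bounded (by $\norm{A}_{C^3}$) but always appears paired with $\beta\abs{x}$, so Young's inequality $\beta\abs{x}\abs{\nabla^2 A}\abs{\nabla v}\abs{v}\le\epsi\beta\abs{\nabla v}^2+\tfrac{\norm{A}_{C^3}^2}{4\epsi\beta^2}\,\beta^3\abs{x}^2\abs{v}^2$ — together with $\norm{v}^2\lesssim\norm{\nabla v}\norm{\,\abs{x}v\,}$ for the order-zero pieces of $T_{0,2}$ — absorbs it as soon as $\beta\ge\beta_0(n,\lambda,\Lambda,\norm{A}_{C^3})$. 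This produces
\begin{equation*}
\inner{[\cS,\cA]v,v}_{L^2_x} \ge c\big(\beta\norm{\nabla v}_{L^2_x}^2+\beta^3\norm{\,\abs{x}v\,}_{L^2_x}^2\big) - C H,
\end{equation*}
with $c,C$ absolute.

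\emph{From the commutator bound to the theorem.} Combined with the first paragraph this gives $\tfrac{d^2}{dt^2}\log H\ge -M_1^2-2C$, whence \eqref{eqn-220711-0221} by convexity on $[0,1]$. For \eqref{eqn-220711-0222} I would instead retain the good terms, so that $H''\ge 2c(\beta\norm{\nabla v}^2+\beta^3\norm{\abs{x}v}^2)-(M_1^2+2C)H$, multiply by $t(1-t)$, integrate over $[0,1]$, integrate $\int_0^1 t(1-t)H''$ by parts twice (the boundary terms vanish, leaving $H(0)+H(1)-2\int_0^1 H\le H(0)+H(1)$), bound $\sup_{[0,1]}H$ by \eqref{eqn-220711-0221}, and finally convert $\nabla v,\abs{x}v$ back to $e^{\beta\abs{x}^2}\nabla u,e^{\beta\abs{x}^2}\abs{x}u$ using $\nabla v=e^{\phi}(\nabla u+2\beta xu)$. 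When $A=I_n$ all the $\nabla A$-corrections vanish identically, only the $T_2$ and $T_{0,1}$ contributions remain, no largeness of $\beta$ is used, and one may take $\beta_0=0$.

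\emph{The main obstacle.} The genuine difficulty is not the (lengthy) commutator bookkeeping but justifying all of the above integrations by parts: a priori we control $e^{\beta\abs{x}^2}u$ only at $t=0,1$, so $H$, $D$ and their derivatives need not be finite for $0<t<1$ and the formal manipulations are not yet licensed. Following \cite{EKPV_JEMS}, I would first run the entire computation on a dissipative regularization $\p_t u_\epsi=(\epsi+i)(\cL+V)u_\epsi$, $\epsi>0$ (a genuine forward parabolic equation since $\cL\le 0$), whose solutions are, by the parabolic smoothing/decay estimate of Appendix~\ref{sec AppC}, smooth with Gaussian-type spatial decay on every $[\delta,1-\delta]$ — enough to make the manipulations rigorous. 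One then checks that the dissipative term contributes only favorably (or $O(\epsi)$) to $\tfrac{d^2}{dt^2}\log H_\epsi$ and that the constants stay uniform as $\epsi\to 0^+$, and passes to the limit $u_\epsi\to u$ in $C([0,1],L^2)$, using a subordination-type inequality (Appendix~\ref{sec AppA}) to carry the finiteness of the weighted norm from the endpoints into the interior. This regularization-and-limiting step is where the care is concentrated.
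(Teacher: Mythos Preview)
Your approach --- the commutator lower bound for $\inner{[\cS,\cA]v,v}$ followed by a parabolic regularization to justify the formal manipulations --- is exactly the paper's, and your commutator sketch is correct (the paper's Lemma~\ref{lem-220515-0957} in Appendix~\ref{sec AppB} plays the role you assign to the ``Hardy-type'' step, and works in all dimensions). Two corrections to your last paragraph, however. First, Appendix~\ref{sec AppA} is not used in Theorem~\ref{thm-220515-0921} at all; the subordination inequality only enters later, in Corollary~\ref{cor-220515-1117}, to upgrade Gaussian to higher-order exponential decay. Second, Appendix~\ref{sec AppC} (Lemma~\ref{lem-220706-1211}) does not by itself give intermediate-time Gaussian decay: it \emph{assumes} $e^{\beta|x|^2}u_\epsi(t)\in L^2$ for $t\in(0,1)$ and then upgrades to pointwise and derivative decay. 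The ingredient you are missing is the forward energy estimate Lemma~\ref{lem-220705-1134}: for the dissipative flow ($a>0$), Gaussian decay of the initial datum propagates to all later times, with a strictly smaller exponent. The paper then bootstraps (proof of Proposition~\ref{prop-220704-0508}): Lemma~\ref{lem-220705-1134} gives a sub-Gaussian bound; one runs the log-convexity argument with the slightly weaker weight $e^{\beta|x|^{2-2\delta}}$ (for which the integrations by parts are now licit), obtains a bound uniform in $\delta$, and passes $\delta\to 0$ by monotone convergence to get $e^{\beta|x|^2}u_\epsi(t)\in L^2$; only then does Lemma~\ref{lem-220706-1211} apply, justifying Lemma~\ref{lem-220711} with the full weight. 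A final subtlety you do not mention: one must define $u_\epsi(t):=e^{t\epsi(\cL+V)}u(t)$ (rather than just solving the dissipative equation from $u_0$), so that $u_\epsi(1)=e^{\epsi(\cL+V)}u(1)$ inherits decay $e^{\beta_\epsi|x|^2}u_\epsi(1)\in L^2$ with $\beta_\epsi\nearrow\beta$ from the assumed decay of $u(1)$, again via Lemma~\ref{lem-220705-1134}; this is what allows the limit $\epsi\to 0$ to recover the full exponent $\beta$.
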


From Theorem \ref{thm-220515-0921}, we can further derive the persistence of higher order decays.
\begin{corollary}\label{cor-220515-1117}
	Under the assumptions of Theorem \ref{thm-220515-0921}, for any $\alpha>1$, there exists some $\kappa_0 = \kappa_0 (\beta_0,\alpha)$, such that if we further assume that for some $\kappa > \kappa_0$
	\begin{equation*}
	e^{\kappa |x|^{2\alpha}}u(0,x), \,  e^{\kappa |x|^{2\alpha}} u(1,x) \in L^2(\bR^n),
	\end{equation*}
	then for any $t\in (0,1)$, we also have 
	\begin{equation*}
	e^{\kappa |x|^{2\alpha}}u(t,x)\in L^2(\bR^n)
	\end{equation*}
	and
	\begin{equation*}
	\sqrt{t(1-t)}e^{\kappa |x|^{2\alpha}}  \nabla u(t,x), \,  \sqrt{t(1-t)}e^{\kappa |x|^{2\alpha}} xu(t,x) \in L^2([0,1],L^2(\mathbb{R}^n)),
	\end{equation*}
with estimates
 	\begin{equation}\label{cor.high.order.decay.1}
	    \int_{\bR^n} e^{\kappa |x|^{2\alpha}} |u(t,x)|^2 \,dx \leq C \left( \int_{\bR^n} e^{\kappa |x|^{2\alpha}} |u_0|^2 \, dx \right)^{1-t} \left( \int_{\bR^n} e^{\kappa |x|^{2\alpha}} |u_1|^2 \, dx \right)^t
	\end{equation}
	and 
	\begin{equation} \label{cor.high.order.decay.2}
	\|\sqrt{t(1-t)}e^{\kappa |x|^{2\alpha}} \nabla u \|_{L^2_{t,x}}^2 + \|\sqrt{t(1-t)}e^{\kappa |x|^{2\alpha}} xu \|_{L^2_{t,x}}^2 
	\leq C
	( \|e^{ \kappa |x|^{2\alpha} } u(0,x)\|^2_{L^2_{x}} + \|e^{ \kappa |x|^{2\alpha} }u(1,x)\|^2_{L^2_{x}} ).
	\end{equation}
\end{corollary}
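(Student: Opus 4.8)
\textbf{Proof plan for Corollary \ref{cor-220515-1117}.}

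The plan is to bootstrap from the Gaussian-weight result of Theorem \ref{thm-220515-0921} to the higher-order weight $e^{\kappa|x|^{2\alpha}}$ by a subordination (or approximation) argument. The key observation is that for any $\alpha>1$ and any $R>0$, the superlinear weight $\kappa|x|^{2\alpha}$ dominates a quadratic weight on the ball $B_R$ up to an additive constant, while outside $B_R$ we have $\kappa|x|^{2\alpha}\ge \kappa R^{2(\alpha-1)}|x|^2$, so that $\kappa|x|^{2\alpha}$ beats \emph{any} quadratic weight $\beta|x|^2$ once $R$ is large enough. Thus the hypothesis $e^{\kappa|x|^{2\alpha}}u(0,\cdot),\,e^{\kappa|x|^{2\alpha}}u(1,\cdot)\in L^2$ gives, for every $\beta>0$, that $e^{\beta|x|^2}u(0,\cdot),\,e^{\beta|x|^2}u(1,\cdot)\in L^2$ (with a quantitative bound in terms of $\kappa$, $\beta$, and $\beta_0$), and hence Theorem \ref{thm-220515-0921} applies for \emph{all} $\beta>\beta_0$, yielding \eqref{eqn-220711-0221}--\eqref{eqn-220711-0222} with every such $\beta$.

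First I would fix $t\in(0,1)$ and, using a standard subordination-type inequality (the one proved in the Appendix \ref{sec AppA}), express the single-profile weight $e^{\kappa|x|^{2\alpha}}$ as a superposition, or a supremum over a family, of Gaussians $e^{\beta|x|^2-c(\beta,\kappa,\alpha)}$; concretely, one writes $\kappa r^{2\alpha} = \sup_{\beta>0}(\beta r^2 - \Phi(\beta))$ for the appropriate Legendre-type dual function $\Phi$, with $\Phi(\beta)\sim c_\alpha \kappa^{-1/(\alpha-1)}\beta^{\alpha/(\alpha-1)}$. Then, from the log-convexity estimate \eqref{eqn-220711-0221} applied at level $\beta$, one obtains pointwise-in-$\beta$
\begin{equation*}
    \|e^{\beta|x|^2}u(t)\|_{L^2_x}^2 \le Ce^{M_1^2}\,\|e^{\beta|x|^2}u(0)\|_{L^2_x}^{2(1-t)}\|e^{\beta|x|^2}u(1)\|_{L^2_x}^{2t},
\end{equation*}
and multiplying both sides by $e^{-2\Phi(\beta)}$, bounding $e^{\beta|x|^2-\Phi(\beta)}\le e^{\kappa|x|^{2\alpha}}$ on the right and taking the supremum (or integrating) over $\beta>\beta_0$ on the left, gives \eqref{cor.high.order.decay.1}; the constant $\kappa_0=\kappa_0(\beta_0,\alpha)$ enters to ensure that the relevant range of $\beta$ that recovers $\kappa|x|^{2\alpha}$ as $|x|\to\infty$ actually lies in $(\beta_0,\infty)$, i.e. that one never needs a subcritical $\beta\le\beta_0$. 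The gradient and $xu$ bounds \eqref{cor.high.order.decay.2} follow in the same way from \eqref{eqn-220711-0222}, with the extra factors of $\beta$ and $\beta^3$ only helping (they can be dropped, or absorbed, since $\beta>\beta_0>0$).

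The main obstacle, and the point requiring care, is the interchange of the supremum/integral over $\beta$ with the $L^2_x$ norm and the justification that the resulting right-hand side is finite — this is precisely where one needs the quantitative version of ``$e^{\kappa|x|^{2\alpha}}$ control implies $e^{\beta|x|^2}$ control for all $\beta$'', together with a uniform (in $\beta$) handle on $\|e^{\beta|x|^2}u(0)\|_{L^2_x}$ and $\|e^{\beta|x|^2}u(1)\|_{L^2_x}$ in terms of $\|e^{\kappa|x|^{2\alpha}}u(0,1)\|_{L^2_x}$. One does this by splitting $\bR^n = B_R\cup B_R^c$ with $R = R(\beta,\kappa,\alpha)$ chosen so that $\beta|x|^2\le \kappa|x|^{2\alpha}$ on $B_R^c$, controlling the $B_R$ piece by $e^{\beta R^2}\|u(0,1)\|_{L^2_x}$ and the $B_R^c$ piece by $\|e^{\kappa|x|^{2\alpha}}u(0,1)\|_{L^2_x}$; tracking the $R$-dependence then shows the $\beta$-superposition converges provided $\kappa>\kappa_0(\beta_0,\alpha)$. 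A secondary technical point is that \eqref{eqn-220711-0222} is an $L^2_{t,x}$ statement with the degenerate weight $\sqrt{t(1-t)}$, so the $\beta$-superposition for the gradient/$xu$ bounds should be carried out after integrating in $t$, which causes no difficulty since the constant on the right of \eqref{eqn-220711-0222} is $t$-independent. Once finiteness and the interchange are justified, the membership statements $e^{\kappa|x|^{2\alpha}}u(t)\in L^2$ and $\sqrt{t(1-t)}e^{\kappa|x|^{2\alpha}}\nabla u,\ \sqrt{t(1-t)}e^{\kappa|x|^{2\alpha}}xu\in L^2_{t,x}$ are immediate consequences of \eqref{cor.high.order.decay.1}--\eqref{cor.high.order.decay.2}.
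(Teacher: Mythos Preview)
Your proposal correctly identifies the subordination lemma from Appendix~\ref{sec AppA} as the key tool and has the right overall idea of superposing the Gaussian log-convexity estimates over~$\beta$. However, the specific mechanism you describe for combining over~$\beta$ has a gap.

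The supremum variant does not work: one has $\sup_\beta \int e^{2(\beta|x|^2-\Phi(\beta))}|u(t)|^2\,dx \le \int e^{2\sup_\beta(\beta|x|^2-\Phi(\beta))}|u(t)|^2\,dx$, which is the wrong inequality direction for the left side. The integration variant is the right one, but your description of it --- bound $e^{\beta|x|^2-\Phi(\beta)}\le e^{\kappa|x|^{2\alpha}}$ pointwise on the right, then integrate in~$\beta$ --- makes the right-hand side a $\beta$-integral of a constant, hence divergent. Your $B_R$-split does not rescue this: with $R=(\beta/\kappa)^{1/(2\alpha-2)}$ one gets $\beta R^2 = \kappa^{-1/(\alpha-1)}\beta^{\alpha/(\alpha-1)}$, while the Legendre dual is $\Phi(\beta)=\frac{\alpha-1}{\alpha^{\alpha/(\alpha-1)}}\kappa^{-1/(\alpha-1)}\beta^{\alpha/(\alpha-1)}$, and since $\frac{\alpha-1}{\alpha^{\alpha/(\alpha-1)}}<1$ the $B_R$ contribution $e^{2(\beta R^2-\Phi(\beta))}$ still blows up as $\beta\to\infty$.

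The paper's argument avoids this by integrating both sides of \eqref{eqn-220711-0221} against the full subordination kernel $e^{-(2\beta)^q/(q\kappa^q)}(2\beta)^{(q-2)/2}\,d\beta$, then applying H\"older in~$\beta$ with exponents $1/(1-t),\,1/t$ to the right-hand side \emph{before} any pointwise bound, then Fubini, so that on each side the inner $\beta$-integral is exactly the two-sided estimate $\approx e^{\kappa^\alpha|x|^{2\alpha}/\alpha}$ from Lemma~\ref{lem1.Appendix}. A second point you should be aware of: that lemma is stated only for $p\in(1,2)$, so the paper handles general $\alpha>1$ in stages --- first $\alpha\in(1,2)$ by the lemma, then $\alpha=2^m$ by a direct completing-the-square trick (multiply by $e^{-(\beta/\kappa)^2}$ and integrate), and finally $\alpha\in(2^m,2^{m+1})$ by applying the $(1,2)$-case with $r=|x|^{2^{m+1}}$ and exponent $\alpha/2^m$.
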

\begin{remark}\label{rmk-220711}
The proof of Theorem \ref{thm-220515-0921} is based on the lower bound for the operator
\begin{equation*}
	[\cS,\cA]
\end{equation*}
in  \eqref{eqn.comm.convexity}. By assuming a certain regularity and a Gaussian decay at any intermediate time of the solution, such convexity result is proved in Lemma \ref{lem-220711}. However, such decay and regularity are not known {\it a priori}. To this end, we introduce an ``artificial dissipation'' by considering the parabolic flow approximation $u_\epsi = e^{t \varepsilon (\cL + V) u}$ (see \eqref{eq u_e}), then use the parabolic estimate to get the desired decay and regularity of $u_\varepsilon$, and hence the log-convexity of the weighted norm. Finally, passing $\varepsilon\rightarrow 0$, we get the result for $u$ solving the original equation \eqref{eqn-220312-0505}. We remark that a similar strategy was also used in \cite{EKPV_JEMS} to prove Theorem \ref{thm-220515-0921} in the case when $a_{jk}=\delta_{jk}$.
\end{remark}
For the reasons explained in Remark \ref{rmk-220711},
we postpone the proof of Theorem \ref{thm-220515-0921} till the end of the section (Section \ref{ssec 3.2}) and devote the subsection below (Section \ref{sec-220915-0640}) to the proof of the results concerning the ``dissipative case''.

\subsection{$Log$-convexity for equations with dissipation} \label{sec-220915-0640}
In this subsection we study the equation
\begin{equation}\label{eqn-220630-1118}
\p_t u = (a+ib)(\cL u + Vu),\quad (t,x)\in [0,1]\times \bR^n.
\end{equation}

We prove the $log$-convexity property in Proposition \ref{prop-220704-0508}, with dissipation.
\begin{proposition}\label{prop-220704-0508}
	Let $u \in C^0([0,1],L^2(\mathbb{R}^n)) \cap L^2([0,1],H^1(\mathbb{R}^n))$ be a solution to
	\begin{equation}\label{eqn-220705-0804}
		\p_t u = (a + ib)(\cL u + Vu + g(t,x)), \quad (t,x) \in [0,1] \times \bR^n
	\end{equation}
	with $A \in C^3(\bR^n)$,
	\begin{equation} \label{eqn-220921-1258}
		a > 0, \quad b \in \bR, \quad \text{and} \,\, V = V (x) \,\,\text{being real-valued}.
	\end{equation}
	There exist a small enough $\epsi_0 = \epsi_0(n,\lambda, \Lambda) >0$   and a large enough $\beta_0 = \beta_0(n,\lambda,\Lambda, \|A\|_{C^3})$, such that if \eqref{eqn-220512-1132}-\eqref{eqn-220704-0504} are satisfied, then the following estimates hold  $e^{\beta|x|^2}g\in L^\infty([0,1],L^2(\mathbb{R}^n))$
	\begin{equation}\label{eqn-220705-1232}
		\|e^{\beta|x|^2}u(t,x)\|_{L^2_x}^2 \leq 	C e^{(a^2 + b^2) (1 + M_1^2 + M_2^2)} (\|e^{\beta|x|^2}u(0,x)\|_{L^2_x}^2)^{1-t} (\|e^{\beta|x|^2}u(1,x)\|_{L^2_x}^2)^t,
	\end{equation}
	\begin{equation}\label{eqn-220521-0546}
	\begin{split}
	&\beta\|\sqrt{t(1-t)}e^{\beta|x|^2}|\nabla u|\|_{L^2_{t,x}}^2 + \beta^3\|\sqrt{t(1-t)}e^{\beta|x|^2}|xu|\|_{L^2_{t,x}}^2 
	\\&\leq 
	C\big( ( a^2 + b^2)^{-1} + (1 + M_1^2 + M_2^2)  e^{(a^2 + b^2) (1 + M_1^2 + M_2^2)} \big)  \max\{H(0),H(1)\}.
	\end{split}
	\end{equation}
	Here, 
	\begin{equation}\label{eqn-220705-0739}
	M_1 := \|V\|_{L^\infty_{t,x}} \quad  \text{and} \quad M_2 := \sup_{t} \frac{ \|e^{\beta|x|^2} g \|_{L^2_x} }{ \| e^{\beta|x|^2} u \|_{L^2_x} }.
	\end{equation}
\end{proposition}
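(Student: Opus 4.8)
The strategy is the classical logarithmic convexity machinery of Escauriaza–Kenig–Ponce–Vega, adapted to variable coefficients via the commutator formula from Section \ref{sec Pre}. First I would conjugate: set $v := e^{\phi} u$ with the quadratic weight $\phi(t,x) := \beta|x|^2$ (the weight is time-independent here, which simplifies many terms). Writing the equation \eqref{eqn-220705-0804} in the form $\p_t u = (a+ib)(\cL + V)u + (a+ib)g$ and using $e^{\phi}(i\p_t + \cL)e^{-\phi} = \cS + \cA$ from \eqref{eqn-220815-0801}, one rewrites the evolution of $v$ as a symmetric-plus-antisymmetric system. Concretely, $\p_t v = \mathcal{S}_{\mathrm{sym}} v + \mathcal{A}_{\mathrm{antisym}} v + (\text{lower order}) v + (a+ib)e^{\phi} g$, after absorbing the $(a+ib)$ factor and the dissipation into a rescaled time or into the definition of the symmetric/antisymmetric parts. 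Then define $H(t) = \|v(t)\|_{L^2_x}^2 = \|e^{\beta|x|^2}u(t)\|_{L^2_x}^2$ and $D(t) = \inner{\mathcal{S}v, v}$ (the ``energy''), and compute $\dot H$ and $\ddot H$.

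The heart of the matter is the second derivative: one obtains, schematically, $\ddot H \cdot H - (\dot H)^2 \geq 2 H \cdot \inner{[\mathcal{S},\mathcal{A}] v, v} - (\text{error terms from }V, g, \text{ and the dissipation})$, and the positivity of the leading part of $[\mathcal{S},\mathcal{A}]$ must be extracted. This is where the smallness hypothesis \eqref{eqn-220512-1132}, i.e. $\sup |x||\nabla A| \le \epsi_0$, and the largeness of $\beta$ enter. From the expansion \eqref{eqn-220815-0801-2}, with $\phi = \beta|x|^2$ one has $\nabla\phi = 2\beta x$, $\nabla^2\phi = 2\beta I$, $\nabla^3 \phi = 0$, $\nabla^4\phi = 0$, $\p_t\phi = 0$. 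Thus the principal term $T_2 = -4a_{kj}a_{ml}(\p^2_{kl}\phi)\p^2_{ml} = -8\beta\, a_{kj}a_{ml}\delta_{kl}\p^2_{ml} + \dots$ is, modulo the $|\nabla A|$-small corrections, comparable to $\beta$ times the (positive) operator $-\nabla\cdot(A^2\nabla)$, and the zeroth order piece $T_{0,1} = 4 a_{ml}a_{kj}(\p_l\phi)(\p^2_{km}\phi)(\p_j\phi) + \dots \sim 32\beta^3 |Ax|^2 + \dots$ gives the $\beta^3 |xu|^2$ gain. The error terms multiplying $\nabla$ and the zeroth-order errors in \eqref{eqn-220815-0801-2} all carry at least one factor of $|\nabla A|$ or $|\nabla^2 A|$ or $|\nabla^3 A|$; using $|x||\nabla A| \le \epsi_0$ together with the $C^3$ bound on $A$, and Cauchy–Schwarz to trade the $\beta^2|\nabla v|$ and $\beta^4 |xv|^2$ against the good terms, one absorbs these provided $\epsi_0$ is small depending on $(n,\lambda,\Lambda)$ and $\beta > \beta_0(n,\lambda,\Lambda,\|A\|_{C^3})$. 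The contributions of $V$ and $g$ enter only through $M_1, M_2$ and are handled by Cauchy–Schwarz, contributing the $e^{(a^2+b^2)(1+M_1^2+M_2^2)}$ factor after integrating the differential inequality. The net output is a differential inequality of the form $\ddot H \cdot H - (\dot H)^2 \ge -C(a^2+b^2)(1+M_1^2+M_2^2) H^2$ (after dividing through), i.e. $\frac{d^2}{dt^2}\log H \ge -C(a^2+b^2)(1+M_1^2+M_2^2)$, which integrates to the convexity bound \eqref{eqn-220705-1232}; the gradient/weighted estimate \eqref{eqn-220521-0546} then follows by integrating the identity for $\ddot H$ (or $\dot D$) against the weight $t(1-t)$ and using \eqref{eqn-220705-1232} to bound the boundary and $H$-terms.

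One subtlety: since $u \in C^0([0,1],L^2)\cap L^2([0,1],H^1)$ is only assumed, but \eqref{eqn-220704-0504} gives the quadratic-exponential weight only at $t = 0, 1$, all the integrations by parts used to derive the $\ddot H$ identity must be justified. Here is precisely where the dissipation $a > 0$ is used: the parabolic smoothing from the $e^{ta(\cL+V)}$-type flow (formalized in the appendix parabolic-approximation lemma) provides enough decay and regularity of $u$ on compact time subintervals $[\delta, 1-\delta]$ to integrate by parts, after which one takes $\delta \to 0$. I expect the main obstacle to be precisely the bookkeeping in Step 2: verifying that \emph{every} error term in $[\mathcal{S},\mathcal{A}]$ — in particular the first-order terms $O(1)(|\nabla^2\phi||\nabla A| + |\nabla\phi||\nabla A|^2 + |\nabla\phi||\nabla^2 A|)\nabla$ — can be absorbed by the $\beta\,|A\nabla v|^2$ and $\beta^3|Ax\,v|^2$ positive terms using only the scale-invariant smallness $|x||\nabla A| \le \epsi_0$ (note $|\nabla\phi| = 2\beta|x|$, so $|\nabla\phi||\nabla A| \le 2\beta\epsi_0$, which is exactly the right size to beat against the order-one gain from $T_2$, while $|\nabla\phi||\nabla^2 A|$ and $|\nabla\phi||\nabla A|^2$ need the $C^3$-boundedness and Hardy/Cauchy–Schwarz tricks). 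Getting the dependence of $\beta_0$ and $\epsi_0$ to come out as stated — $\epsi_0$ depending only on $n,\lambda,\Lambda$ and not on $\|A\|_{C^3}$ — requires being careful to absorb the $\|A\|_{C^3}$-dependent terms into the largeness of $\beta$ rather than the smallness of $\epsi_0$.
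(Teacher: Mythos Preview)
Your outline of the log-convexity computation --- conjugating by $e^{\beta|x|^2}$, computing $\dot H$, $\ddot H$, and bounding $\inner{[\cS,\cA]f,f}$ from below using the smallness of $|x||\nabla A|$ and the largeness of $\beta$ --- is correct and matches the paper's a priori computation (Lemma~\ref{lem-220711}). Your description of how $T_2$ and $T_{0,1}$ produce the good terms $\beta|A\nabla v|^2$ and $\beta^3|Ax\,v|^2$, and of how the error terms are absorbed (with $\epsi_0$ depending only on $n,\lambda,\Lambda$ and the $\|A\|_{C^3}$-dependent terms absorbed into the largeness of $\beta$), is accurate.

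However, there is a genuine gap in your justification step, and it is not a matter of bookkeeping. You write that the dissipation $a>0$ ``provides enough decay and regularity of $u$ on compact time subintervals $[\delta,1-\delta]$ to integrate by parts, after which one takes $\delta\to 0$.'' But the parabolic decay estimate (Lemma~\ref{lem-220705-1134}) only yields $e^{\gamma(t)|x|^2}u(t)\in L^2_x$ with an exponent $\gamma(t)$ that is \emph{strictly smaller} than $\beta$ for $t>0$ (it degrades like $\beta/(1+C\beta t)$). This does \emph{not} justify the integrations by parts in the $\ddot H$ computation for the full weight $e^{\beta|x|^2}$: you need decay slightly \emph{faster} than $e^{-\beta|x|^2}$ (cf.\ condition~\eqref{eqn-220705-0802}), and what you have is slightly slower. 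The appendix lemma you invoke (Lemma~\ref{lem-220706-1211}) does not help either, since it \emph{assumes} the full $e^{\beta|x|^2}$ decay at intermediate times as a hypothesis.

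The paper closes this gap by a bootstrapping trick you are missing: first run the entire log-convexity argument with the \emph{sub-quadratic} weight $e^{\beta|x|^{2-\delta}}$ for small $\delta>0$. The degraded Gaussian decay from Lemma~\ref{lem-220705-1134} dominates any $e^{\beta|x|^{2-\delta}}$, so all integrations by parts are now justified (after a cutoff near the origin and Lemma~\ref{lem-220706-1211}). One checks that the commutator $[\cS^\delta,\cA^\delta]$ for this weight is still nonnegative for small $\delta$, yielding a bound $\sup_t\|e^{\beta|x|^{2-2\delta}}u(t)\|_{L^2_x}\le C$ uniform in $\delta$. Monotone convergence then gives $e^{\beta|x|^2}u(t)\in L^2_x$ for all $t\in(0,1)$, at which point Lemma~\ref{lem-220706-1211} and the a priori Lemma~\ref{lem-220711} (applied with $\beta-2\epsi$, then $\epsi\to 0$) finish the proof.
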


\begin{remark}\label{rmk-1103-1427}[Outline of the proof of Proposition \ref{prop-220704-0508}]
We split the proof of this result in three steps, treated, respectively, in three Subsections:
\begin{itemize}
    \item[\textbf{Step 1}] We prove a $log$-convexity result under some decay assumptions on the solution; this is Lemma \ref{lem-220711} in Subsection \ref{subsub-220711-1}.
    \item[\textbf{Step 2}] We prove decay properties for solutions to dissipative equations at any intermediate time $t\in (0,1)$ in Subsection \ref{subsub-220711-2}; 
    \item[\textbf{Step 3}] We use Step 2 to derive the decay rate required in Step 1 and conclude the proof of Proposition \ref{prop-220704-0508} in Subsection \ref{subsub-220711-3}.
\end{itemize}
\end{remark}

\subsubsection{$Log$-convexity for equations with dissipation under decay assumptions}\label{subsub-220711-1}
In order to prove Proposition \ref{prop-220704-0508} we need several lemmas, of which the main one is the following {\it a priori} estimate.

\begin{lemma}\label{lem-220711}
The conclusions of Proposition \ref{prop-220704-0508} hold if we further assume that $u  \in L^2_{t,loc}((0,1), H^2_{x,loc}(\mathbb{R}^n))$ and for each $t \in (0,1)$ there exists some small constant $\beta_\epsi = \beta_\epsi(t)>0$,
	\begin{equation}\label{eqn-220705-0802}
	|u(t,x)| + |\nabla_x u(t,x)| = o(e^{-(\beta + \beta_\epsi)|x|^2}) \quad \text{as} \,\, |x| \rightarrow \infty.
  \end{equation}
  In this case, assumption \eqref{eqn-220921-1258} can be relaxed to
  \begin{equation*}
      a, b \in \bR, \, a^2 + b^2 \neq 0, \quad \text{and} \,\, V = V (t,x) \in L^\infty([0,1]\times \mathbb{R}^n) \,\,\text{not necessarily being real-valued}.
  \end{equation*}
\end{lemma}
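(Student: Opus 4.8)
The plan is to adapt the classical logarithmic--convexity scheme of Escauriaza--Kenig--Ponce--Vega to the conjugated operator. Since the extra decay \eqref{eqn-220705-0802} and the interior $H^2$--regularity are now hypotheses, this lemma is exactly ``Step~1'' of the proof of Proposition~\ref{prop-220704-0508} announced in Remark~\ref{rmk-1103-1427}.

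\emph{Conjugation.} First I would conjugate by the time-independent weight $e^{\phi}$, $\phi=\phi(x)=\beta\abs{x}^2$, and set $f:=e^{\phi}u$. By the decomposition \eqref{eqn-220815-0801}--\eqref{eqn-220815-0801-1} --- in which, $\phi$ being independent of $t$, the terms $i\p_t$ and $-i\p_t\phi$ disappear --- one has $e^{\phi}\cL e^{-\phi}=\cS_0+\cA_0$, where $\cS_0$ (the spatial part of $\cS$ in \eqref{eqn-220815-0801-1}) is symmetric and $\cA_0$ (the spatial part of $\cA$) is antisymmetric with respect to $\inner{\cdot,\cdot}_{L^2_x}$. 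Hence
\[
 \p_t f=(a+ib)(\cS_0+\cA_0)f+G,\qquad G:=(a+ib)\,e^{\phi}(Vu+g).
\]
Splitting $(a+ib)(\cS_0+\cA_0)$ into its symmetric and antisymmetric parts, and using that $i$ times a symmetric operator is antisymmetric and conversely, we get $\p_t f=\widehat{\cS}f+\widehat{\cA}f+G$ with $\widehat{\cS}:=a\cS_0+ib\cA_0$ symmetric and $\widehat{\cA}:=a\cA_0+ib\cS_0$ antisymmetric. By the definition \eqref{eqn-220705-0739} of $M_1,M_2$ one has $\norm{G(t)}_{L^2_x}\le\sqrt{a^2+b^2}\,(M_1+M_2)\norm{f(t)}_{L^2_x}$ for a.e.\ $t$, and this remains valid when $V$ is complex and $t$-dependent, with $M_1=\norm{V}_{L^\infty_{t,x}}$. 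The decay \eqref{eqn-220705-0802} together with $u\in L^2_{t,loc}H^2_{x,loc}$ guarantees $f(t),\widehat{\cS}f(t),\widehat{\cA}f(t)\in L^2_x$ for a.e.\ $t$ and legitimizes all the integrations by parts below (performed with spatial cut-offs, using in addition the global bound $u\in L^2_tH^1_x$).

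\emph{The commutator lower bound.} This is the heart of the matter. Since $\cS_0,\cA_0$ do not depend on $t$ we have $\p_t\widehat{\cS}=0$ and, expanding the bracket (the diagonal terms $[\cS_0,\cS_0]$ and $[\cA_0,\cA_0]$ cancel), the clean identity $[\widehat{\cS},\widehat{\cA}]=(a^2+b^2)\,[\cS_0,\cA_0]$. I would then substitute $\phi=\beta\abs{x}^2$ --- so $\nabla\phi=2\beta x$, $\nabla^2\phi=2\beta I_n$, and all third and higher derivatives of $\phi$ vanish --- into the general commutator formula $T_2+T_1+T_{0,1}+T_{0,2}$ of Subsection~\ref{sub-220817}. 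The two principal contributions are the second-order term $T_2$, which after an integration by parts gives --- up to an $\epsi_0$-small remainder --- a positive contribution $\gtrsim\beta\norm{\nabla f}_{L^2_x}^2$, and the leading zeroth-order part of $T_{0,1}$, which by ellipticity \eqref{eqn-221030-0836} is $\gtrsim\beta^3\abs{x}^2$ and hence contributes $\gtrsim\beta^3\norm{\abs{x}f}_{L^2_x}^2$; every remaining term in the expansion \eqref{eqn-220815-0801-2} carries at least one of the factors $\abs{x}\abs{\nabla A}$, $\abs{\nabla^2A}$, $\abs{\nabla^3A}$ together with a factor $\beta\nabla$ or $\beta^2x$, so by the smallness \eqref{eqn-220512-1132}, $A\in C^3$, ellipticity, and Young's inequality it is absorbed once $\epsi_0$ is small and $\beta>\beta_0(n,\lambda,\Lambda,\norm{A}_{C^3})$. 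The outcome is a lower bound
\[
 \inner{[\widehat{\cS},\widehat{\cA}]f,f}_{L^2_x}\ \ge\ (a^2+b^2)\Bigl(c\,\beta\norm{\nabla f}_{L^2_x}^2+c\,\beta^3\norm{\abs{x}f}_{L^2_x}^2-C\norm{f}_{L^2_x}^2\Bigr),
\]
with $c=c(n,\lambda,\Lambda)>0$ and $C=C(n,\lambda,\Lambda,\norm{A}_{C^3},\beta)$; when $A=I_n$ the $\nabla A$-terms are absent, the commutator is nonnegative, and $\beta_0=0$. I expect this verification --- choosing the weight and checking that the numerous $\nabla A,\nabla^2A,\nabla^3A$ error terms really are dominated by the good $\beta$- and $\beta^3$-terms --- to be the main obstacle.

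\emph{The abstract convexity lemma.} Finally, put $H(t):=\norm{f(t)}^2_{L^2_x}$ and $D(t):=\inner{\widehat{\cS}f,f}_{L^2_x}$. A direct computation gives $H'=2D+2\operatorname{Re}\inner{G,f}$ and $D'=\inner{(\p_t\widehat{\cS}+[\widehat{\cS},\widehat{\cA}])f,f}+2\norm{\widehat{\cS}f}_{L^2_x}^2+2\operatorname{Re}\inner{\widehat{\cS}f,G}$. Combining these with the Cauchy--Schwarz inequality $D^2\le\norm{\widehat{\cS}f}_{L^2_x}^2H$, the lower bound of the previous step, and $\norm{G}_{L^2_x}\le\sqrt{a^2+b^2}\,(M_1+M_2)\sqrt{H}$, one obtains a differential inequality of the form $(\log H)''\ge-c_1$, where $c_1$ depends on $(a^2+b^2)$, $M_1$, $M_2$ and the structural constants $n,\lambda,\Lambda,\norm{A}_{C^3},\beta$; integrating it over $[0,1]$ yields \eqref{eqn-220705-1232}. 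Multiplying the same identities by $t(1-t)$ and integrating once more bounds $\int_0^1 t(1-t)\bigl(\norm{\widehat{\cS}f}_{L^2_x}^2+\inner{[\widehat{\cS},\widehat{\cA}]f,f}_{L^2_x}\bigr)\,dt$ by $\max\{H(0),H(1)\}$ up to the $G$-corrections, and inserting this into the commutator lower bound --- together with $e^{\phi}\nabla u=\nabla f-2\beta xf$ --- gives the weighted estimate \eqref{eqn-220521-0546}. The relaxation to $a^2+b^2\ne0$ and complex, time-dependent $V$ costs nothing here: the identity $[\widehat{\cS},\widehat{\cA}]=(a^2+b^2)[\cS_0,\cA_0]$ and the symmetry/antisymmetry of $\widehat{\cS},\widehat{\cA}$ do not see the sign of $a$, and a bounded complex potential only adds a further term of the same size to $G$.
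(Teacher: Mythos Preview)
Your overall architecture is exactly the paper's: conjugate, split into $\widehat{\cS}=a\cS_0+ib\cA_0$ and $\widehat{\cA}=a\cA_0+ib\cS_0$, use $[\widehat{\cS},\widehat{\cA}]=(a^2+b^2)[\cS_0,\cA_0]$, and run the $H$--$D$--$N$ machinery. But there is a genuine gap in your commutator lower bound. After plugging $\phi=\beta|x|^2$ into $T_1$ and $T_{0,2}$ and applying Young, you are left with zeroth-order errors of the form $\beta\bigl(|\nabla A|^2+|\nabla^2 A|\bigr)|f|^2$ that carry \emph{no} factor of $|x|$ and \emph{no} factor of $\nabla$; your claim that every remainder comes ``together with a factor $\beta\nabla$ or $\beta^2 x$'' is not correct for these. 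Such terms cannot be absorbed into $\beta^3\||x|f\|^2$ near the origin, nor into $\beta\|\nabla f\|^2$. You sidestep this by allowing your constant $C$ to depend on $\beta$, but then the convexity inequality becomes $(\log H)''\ge -c_1(\beta)$ with $c_1\sim\beta$, and the resulting bound $H(t)\le Ce^{(a^2+b^2)(c_1(\beta)+M_1^2+M_2^2)}H(0)^{1-t}H(1)^t$ is \emph{not} the conclusion \eqref{eqn-220705-1232} of Proposition~\ref{prop-220704-0508}, which requires the exponent $(a^2+b^2)(1+M_1^2+M_2^2)$ with a $\beta$-independent ``$1$''.

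The paper's fix is precisely Lemma~\ref{lem-220515-0957}: split $\int(|\nabla A|^2+|\nabla^2 A|)|f|^2$ into $\{|x|>r\}$ (where one inserts $|x/r|^2\ge 1$ and absorbs into $\beta^3\||x|f\|^2$) and $\{|x|\le r\}$ (where the Poincar\'e-type inequality $\|f\|_{L^2(B_r)}\lesssim r\|\nabla f\|_{L^2(B_{2r})}+r^{-1}\|xf\|_{L^2(B_{2r})}$ converts the bad term into pieces absorbable by $\beta\|\nabla f\|^2$ and $\beta^3\||x|f\|^2$). Choosing $r$ small depending only on $(n,\lambda,\Lambda,\|A\|_{C^3})$ and then $\beta_0$ large yields the key estimate $\langle[\cS_0,\cA_0]f,f\rangle\ge c\beta\|\nabla f\|^2+c\beta^3\||x|f\|^2-H(t)$ with the crucial constant $1$ in front of $H(t)$. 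A smaller point: your appeal to $D^2\le\|\widehat{\cS}f\|^2 H$ alone does not cleanly dispose of the cross terms $2\mathrm{Re}\langle\widehat{\cS}f,G\rangle H-2D\,\mathrm{Re}\langle G,f\rangle$; the paper instead rewrites both $\|\widehat{\cS}f\|^2+\mathrm{Re}\langle\widehat{\cS}f,G\rangle$ and $DH'$ via polarization so that the $\|2\widehat{\cS}f+G\|^2$ contributions cancel exactly, leaving only $-\|G\|^2/(2H)$.
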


It is worth mentioning that with the extra decay assumption \eqref{eqn-220705-0802}, $a>0$ is not needed here.
The proof of Lemma \ref{lem-220711} is essentially obtained by combining the commutator lower bound \eqref{eqn-220515-0938} and the following consequence of Poincaré inequality, which we prove in Appendix \ref{sec AppB} for completeness.
\begin{lemma}\label{lem-220515-0957}
	For any $f=f(x)\in H^1(B_{2r})$ and any $r>0$,
	\begin{equation*}
	\|f\|_{L^2(B_{r})} \leq C\left(r\|\nabla f\|_{L^2(B_{2r})} + r^{-1}\|xf\|_{L^2(B_{2r})}\right),
	\end{equation*}
	where $C=C(n)$.
\end{lemma}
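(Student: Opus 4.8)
The plan is to prove Lemma~\ref{lem-220515-0957} by a scaling reduction followed by the classical Poincaré--Wirtinger inequality. First I would observe that the estimate is scale-covariant: setting $f_r(y):=f(ry)$, a change of variables gives $\|f_r\|_{L^2(B_1)}=r^{-n/2}\|f\|_{L^2(B_r)}$, $\|\nabla f_r\|_{L^2(B_2)}=r^{1-n/2}\|\nabla f\|_{L^2(B_{2r})}$, and $\|yf_r\|_{L^2(B_2)}=r^{-1-n/2}\|xf\|_{L^2(B_{2r})}$; since these powers of $r$ are exactly balanced, it suffices to prove
\[
\|f\|_{L^2(B_1)}\le C(n)\big(\|\nabla f\|_{L^2(B_2)}+\|xf\|_{L^2(B_2)}\big)
\]
for $f\in H^1(B_2)$, and the constant produced this way depends on $n$ alone.

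For the case $r=1$ I would introduce the mean $m:=|B_2|^{-1}\int_{B_2}f\,dx$ and apply the Poincaré--Wirtinger inequality on the convex ball $B_2$, which yields $\|f-m\|_{L^2(B_2)}\le C(n)\|\nabla f\|_{L^2(B_2)}$. It then remains to bound the constant $|m|$, and here the key observation is that $|x|\ge 1$ on the annulus $B_2\setminus B_1$, so $\|f\|_{L^2(B_2\setminus B_1)}\le\|xf\|_{L^2(B_2\setminus B_1)}\le\|xf\|_{L^2(B_2)}$. Combining this with the triangle inequality on $B_2\setminus B_1$,
\[
|m|\,|B_2\setminus B_1|^{1/2}\le\|f\|_{L^2(B_2\setminus B_1)}+\|f-m\|_{L^2(B_2\setminus B_1)}\le\|xf\|_{L^2(B_2)}+C(n)\|\nabla f\|_{L^2(B_2)},
\]
so $|m|\le C(n)\big(\|\nabla f\|_{L^2(B_2)}+\|xf\|_{L^2(B_2)}\big)$. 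Finally I would conclude via $\|f\|_{L^2(B_1)}\le\|f-m\|_{L^2(B_1)}+|m|\,|B_1|^{1/2}$ and insert the two bounds just obtained, which gives the $r=1$ case and hence, by the first step, the lemma.

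I do not expect any genuine obstacle here: the only external ingredient is the standard Poincaré--Wirtinger inequality on a ball with its dimensional constant, and everything else is elementary. The one point deserving a little care is the bookkeeping in the scaling step, which is what guarantees a final constant of the form $C(n)$ rather than one depending on $r$; a slightly less quantitative alternative would be to replace the mean-value argument by the compactness form of the Poincaré inequality $\|f\|_{L^2(B_2)}\le C(\|\nabla f\|_{L^2(B_2)}+\|f\|_{L^2(\omega)})$ with $\omega=B_2\setminus B_1$, again using $|x|\ge 1$ on $\omega$.
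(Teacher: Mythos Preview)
Your proof is correct, but it takes a genuinely different route from the paper. The paper does not scale and does not use the Poincar\'e--Wirtinger inequality; instead it multiplies $f$ by the weight $4r^2-|x|^2$, sets $g(x)=(4r^2-|x|^2)f(x)$ so that $g\in H^1_0(B_{2r})$, and applies the \emph{zero-boundary} Poincar\'e inequality $\|g\|_{L^2(B_{2r})}\le Cr\|\nabla g\|_{L^2(B_{2r})}$. The product rule gives $\nabla g = -2xf + (4r^2-|x|^2)\nabla f$, producing exactly the two terms $xf$ and $\nabla f$; on the other side, $4r^2-|x|^2\ge 3r^2$ on $B_r$ recovers $\|f\|_{L^2(B_r)}$ from below.

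Both arguments ultimately rest on a Poincar\'e-type inequality, but yours isolates the mean and kills it on the annulus $\{|x|\ge 1\}$, while the paper kills the boundary contribution via a multiplicative cutoff. The paper's approach is a one-shot computation valid for all $r$ without rescaling, and the multiplier trick is what makes the $xf$ term appear naturally. Your approach is equally elementary and perhaps more transparent about \emph{why} the $xf$ term suffices to control the constant part of $f$; the scaling step is what guarantees the dimensional constant, which in the paper comes for free from the homogeneity of the multiplier.
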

Now we are ready to give the proof of Lemma \ref{lem-220711}.
\begin{proof}[Proof of Lemma \ref{lem-220711}]
	
	Below the computations until \eqref{eqn-220704-0828-2} basically follow the abstract framework in \cite[Lemma~2]{EKPV_JEMS}. Here, for completeness we provide a proof with explicit computations. Note that in the following, all the formal integration by parts are justified due to our decay assumption \eqref{eqn-220705-0802}.
	Denote
	\begin{equation}\label{eqn-220611-0528}
	f(t,x):=e^{\beta|x|^2}u(t,x),\quad H(t):=\left<f,f\right>_{L^2_x} , \quad h(t,x) := e^{\beta |x|^2} g(t,x),
	\end{equation}
	where $u$ and $g$ are as in \eqref{eqn-220705-0804}. 
	In this proof, we denote
	\begin{equation*}
	\left<\cdot,\cdot\right>:=\left<\cdot,\cdot\right>_{L^2_x},
	\end{equation*}
	and call $\cS$ and $\cA$ the symmetric and antisymmetric operators in the decomposition
    \begin{equation}\label{eqn-220515-0923}
	e^{\beta |x|^2} \cL(e^{-\beta |x|^2}) = \cS + \cA
    \end{equation}
    with respect to the $L^2_x$-inner product (instead of $L^2_{t,x}$ as in \eqref{eqn-220815-0801}).
	Now, since $u$ is a solution to \eqref{eqn-220705-0804} and our weight $e^{\beta|x|^2}$ is independent of $t$,
	\begin{equation}\label{eqn-220609-1105}
	\begin{split}
	\p_t f 
	&= 
	(a + ib) (e^{\beta|x|^2}\cL e^{-\beta|x|^2} + V) f 
	= 
	(a + ib)(\cS+\cA)f + (a + ib)( V f + h)\\
	&=
	\underbrace{(a\cS + ib\cA) }_{symmetric} f + \underbrace{(ib\cS + a\cA) }_{antisymmetric} f  + (a + ib)( V f + h).
	\end{split}
	\end{equation}
	Therefore, 
	\begin{align}
	\frac{dH}{dt} 
	&= 
	2 \mathrm{Re} \left<\p_t f,f\right>  
	=
	2 \left<(a\cS + ib\cA) f , f\right> +  2 \mathrm{Re} \left< (a + ib)( V f + h), f \right>. \label{eqn-220609-1118}
	\end{align}
	We define
	\begin{equation}\label{eqn-220704-0848}
	D(t) := \left<(a\cS + ib\cA) f , f\right> \quad\text{and}\quad  N(t):=\frac{D(t)}{H(t)}.
	\end{equation}
	Now, since both $\cS$ and $\cA$ have coefficients independent of $t$, we have
	\begin{align*}
	\frac{dD}{dt} 
	&= 
	\left< (a\cS + ib\cA) \partial_tf , f \right> + \left< (a\cS + ib\cA) f , \p_tf \right>.
	\end{align*}
	Using \eqref{eqn-220609-1105} and the symmetry and antisymmetry of the operators $\cS$ and $\cA$, we can further compute
	\begin{align}
	\frac{dD}{dt} 
	&=
	\left< (a\cS + ib\cA) f  + (ib\cS + a\cA)f + (a + ib)( V f + h) , (a\cS + ib\cA) f \right>   \\
	&\quad + \left< (a\cS + ib\cA) f , (a\cS + ib\cA) f  + (ib\cS + a\cA)f + (a + ib)( V f + h) \right>  
	\\&=
	\left< [a\cS + ib\cA, ib\cS + a\cA] f, f \right> + 2 \| (a\cS + ib\cA)f \|_{L_x^2}^2 + 2 \mathrm{Re} \left< (a\cS + ib\cA) f, (a + ib)( V f + h)\right>
	\\&=
	(a^2 + b^2) \left< [\cS, \cA] f, f \right> + 2 \| (a\cS + ib\cA)f \|_{L_x^2}^2 + 2 \mathrm{Re} \left< (a\cS + ib\cA) f, (a + ib)( V f + h)\right>. \label{eqn-220611-0601}
	\end{align}
		Hence, 
	\begin{align}
	\frac{dN}{dt}
	&=
	\frac{D'H - DH'}{H^2}  
	\\&=
	\frac{(a^2 + b^2) \left< [\cS, \cA] f, f \right>}{H}  + 2 \frac{ \| (a\cS + ib\cA)f \|_{L_x^2}^2 + \mathrm{Re} \left< (a\cS + ib\cA) f, (a + ib)( V f + h) \right>}{H} - \frac{DH'}{H^2}. \label{eqn-220515-1029}
	\end{align}
	By the bilinearity and the polarization identity,
	\begin{align}
	\| (a\cS + ib\cA)f \|_{L_x^2}^2 &+ \mathrm{Re} \left< (a\cS + ib\cA) f, (a + ib)( V f + h) \right>
	\\&=
	\mathrm{Re} \left< (a\cS + ib\cA)f , (a\cS + ib\cA)f + (a + ib)( V f + h) \right>
	\\&=
	\frac{1}{4} \left(\| 2(a\cS + ib\cA)f + (a + ib)( V f + h) \|_{L_x^2}^2 - \| (a + ib)( V f + h) \|_{L_x^2}^2 \right). \label{eqn-220704-0828-1}
	\end{align}
	From \eqref{eqn-220609-1118} and the bilinearity,
	\begin{align}
		DH'
		&=
		\left<(a\cS + ib\cA)f , f \right> \left( 2 \left<(a\cS + ib\cA)f , f \right> + 2 \mathrm{Re}\left< (a + ib)( V f + h), f \right> \right)
		\\&=
		\frac{1}{2} \left( \mathrm{Re} \left< 2(a\cS + ib\cA) f + (a + ib) ( V f + h) , f \right> - \mathrm{Re} \left< (a + ib) ( V f + h) , f \right> \right)
		\\ &\qquad
		\times \left( \mathrm{Re} \left< 2(a\cS + ib\cA) f + (a + ib) ( V f + h) , f \right> + \mathrm{Re} \left< (a + ib) ( V f + h) , f \right> \right)
		\\ &=
		\frac{1}{2} \left( (\mathrm{Re} \left< 2(a\cS + ib\cA) f + (a + ib) ( V f + h) , f \right>)^2 - (\mathrm{Re} \left< (a + ib) ( V f + h) , f \right>)^2 \right). \label{eqn-220704-0828-2}
	\end{align}
	Substituting \eqref{eqn-220704-0828-1} - \eqref{eqn-220704-0828-2} back to \eqref{eqn-220515-1029}, then using the Cauchy-Schwarz inequality, the triangle inequality, and \eqref{eqn-220705-0739}, we have
	\begin{align}
	\frac{d N}{d t}
	&=
	\frac{(a^2 + b^2) \left< [\cS, \cA] f, f \right>}{H}
	+ 
	\frac{1}{2 H} \left( \| 2(a\cS + ib\cA)f + (a + ib)( V f + h)\|_{L_x^2}^2 - \| (a + ib)( V f + h) \|_{L_x^2}^2 \right) 
	\\&\quad-
	\frac{1}{2 H^2} \left( (\mathrm{Re} \left< 2(a\cS + ib\cA) f + (a + ib) ( V f + h) , f \right>)^2 - (\mathrm{Re} \left< (a + ib) ( V f + h) , f \right>)^2 \right)
	\\&\geq
	\frac{(a^2 + b^2) \left< [\cS, \cA] f, f \right>}{H}
	+ 
	\frac{1}{2 H} \left( \| 2(a\cS + ib\cA)f + (a + ib)( V f + h)\|_{L_x^2}^2 - \| (a + ib)( V f + h) \|_{L_x^2}^2 \right) 
	\\&\quad-
	\frac{1}{2 H^2} (\mathrm{Re} \left< 2(a\cS + ib\cA) f + (a + ib) ( V f + h) , f \right>)^2
	\\&\geq
	\frac{(a^2 + b^2) \left< [\cS, \cA] f, f \right>}{H}
	+ 
	\frac{1}{2 H} \left( \cancel{ \| 2(a\cS + ib\cA)f + (a + ib)( V f + h)\|_{L_x^2}^2 } - \| (a + ib)( V f + h) \|_{L_x^2}^2 \right) 
	\\&\quad-
	\cancel{ \frac{1}{2 H^{2}} \| 2(a\cS + ib\cA) f + (a + ib)( V f + h) \|_{L_x^2}^2 \|f\|_{L_x^2}^2 }
	\\&\geq 
	\frac{(a^2 + b^2) \left< [\cS, \cA] f, f \right>}{H} - (a^2 + b^2) (M_1^2 + M_2^2). \label{eqn-220704-0846}
\end{align}
	
	Next, we bound the term involving $[\cS,\cA]$ from below. Note that, although here we are computing with respect to $\left<\cdot,\cdot\right>_{L^2_x}$ instead of $\left<\cdot,\cdot\right>_{L^2_{t,x}}$, the results in \eqref{eqn-220815-0801-1} - \eqref{eqn-220815-0801-2} are still applicable due to  the weight being time-independent. Therefore we obtain
	\begin{align}
	[\cS,\cA] 
	&= 
	[a_{kj}\p_{kj}^2 + (\p_ka_{kj})\p_j + 4\beta^2 a_{kj}x_kx_j, -4\beta a_{ml}x_l\p_m - 2\beta(\p_ma_{ml})x_l - 2\beta\tr(A)]\\
	&=
	-8\beta a_{kj}a_{ml}(\underbrace{\p_k x_l}_{\delta_{kl}})\p_{mj}^2 - 8\beta a_{kj}(\p_k a_{ml})x_l\p_{mj}^2 + 4\beta a_{ml}x_l(\p_m a_{kj})\p_{kj}^2\\
	&\quad +
	O(1)\beta\big(|x||\nabla^2 A| + |x||\nabla A|^2 + |\nabla A|\big)\nabla\\
	&\quad +
	32\beta^3 a_{ml}a_{kj}(\underbrace{\p_m x_k}_{\delta_{mk}})x_lx_j + 16\beta^3 a_{ml}x_l\p_m(a_{kj})x_kx_j
	\\&\quad+
	O(1)\beta(|x||\nabla^3 A| + |x||\nabla^2 A||\nabla A| + |\nabla^2 A| + |\nabla A|^2). \label{eqn.comm.convexity}
\end{align}
Now, since $f=e^{\beta |x|^2}$ along with its derivative decay fast enough, integration by parts in the second order terms yields 
\begin{align}
	\left<[\cS,\cA]f, f\right>
	&= 
	8\beta\int a_{kj}a_{mk}\p_m f\p_j f \,dx+ \underbrace{8\beta\int a_{kj}(\p_k a_{ml})x_l\p_m f \p_j f\,dx - 4\beta\int a_{ml}x_l (\p_m a_{kj})\p_k f \p_j f \,dx}_{II_1}\\
	&\quad -
	O(1) \underbrace{\beta\int\big(|x||\nabla^2 A| + |x||\nabla A|^2 + |\nabla A|\big)|\nabla f||f| \,dx}_{I}\\
	&\quad +
	32\beta^3 \int a_{kj}a_{kl}x_lx_j|f|^2 \,dx+ \underbrace{16\beta^3 \int a_{ml}x_l\p_m(a_{kj})x_kx_j|f|^2\,dx}_{II_2} \\
	&\quad - O(1)\beta \int\big(|x||\nabla^3 A| + |x||\nabla^2 A||\nabla A| + |\nabla A|^2 + |\nabla^2 A|\big)|f|^2\,dx. \label{eqn-220508-0350-1}
\end{align}
Noting that $a_{kj}a_{mk} = (A^2)_{mj}$, the ellipticity condition in \eqref{eqn-221030-0836} gives
\begin{equation*}
	\lambda_{min}(A^2)\geq \lambda_{min}(A)^2 \geq \lambda^2, \quad \text{i.e.,}\,\, a_{kj}a_{mk} \geq \lambda^2\delta_{mj},
\end{equation*}
which implies
\begin{equation}\label{eqn-220508-0349-11}
	a_{kj}a_{mk}\p_m f\p_j f \geq \lambda^2 |\nabla f|^2 ,\quad a_{kj}a_{kl}x_lx_j \geq \lambda^2|x|^2.
\end{equation}
Using Young's inequality, we have
\begin{equation}\label{eqn-220508-0349-22}
	I \leq \beta\int \lambda^2 |\nabla f|^2 \, dx  + C\beta \int\big(|x|^2|\nabla^2 A|^2 + |x|^2 |\nabla A|^4 + |\nabla A|^2\big)|f|^2\,dx.
\end{equation}
Moreover, from the boundedness of $a_{jk}$ in \eqref{eqn-221030-0836},
\begin{equation} \label{eqn-221030-0838}
|II_1| \leq C\beta\Lambda \int |x||\nabla A||\nabla f|^2 \, dx  \quad \text{and}\quad |II_2| \leq C\beta\Lambda \int |x||\nabla A||x f|^2 \, dx .
\end{equation}
Substituting \eqref{eqn-220508-0349-11}, \eqref{eqn-220508-0349-22}, and \eqref{eqn-221030-0838} back to \eqref{eqn-220508-0350-1}, we get
\begin{align}\label{eqn-220515-0938}   
\begin{aligned}
 		&\left<[\cS,\cA]f,f\right>\\ 
		&\geq 
		\beta\int \big(8\lambda^2 - C \Lambda  |x||\nabla A| - \lambda^2\big)|\nabla f|^2\,dx\\
		&\quad +
		\beta^3 \int \left(32\lambda^2 - C \Lambda   |x||\nabla A| - \frac{C}{\beta^2}\left(\frac{|\nabla^3 A|}{|x|} + |\nabla^2 A|^2 + |\nabla A|^4 + \frac{|\nabla^2 A||\nabla A|}{|x|} + \frac{|\nabla^2 A|}{|x|^2} + \frac{|\nabla A|^2}{|x|^2}\right)\right)|xf|^2\,dx, 
\end{aligned}
\end{align}
where $C = C(\lambda,\Lambda, n)$. Hence, if we choose $\epsi_0$ in \eqref{eqn-220512-1132} small enough, such that
\begin{equation}
    C \Lambda  |x||\nabla A| \leq \lambda^2,
\end{equation}
then for each $t\in (0,1)$, we have
	\begin{equation}\label{eqn-220515-1012}
	\left<[\cS,\cA]f(t,\cdot),f(t,\cdot)\right> \geq 6\beta\lambda^2\int_{\bR^n}|\nabla f(t,x)|^2\,dx + 31\beta^3\lambda^2\int_{\bR^n} |xf(t,x)|^2\,dx - III,
	\end{equation}
	where
	\begin{align*}
	III 
	&= 
	C \beta\int_{\bR^n} (|\nabla^2 A|^2 + |\nabla A|^4)|x|^2|f|^2\,dx + C \beta\int_{\bR^n} (|\nabla^3 A| + |\nabla^2 A||\nabla A|)|x||f|^2 \,dx \\
	& \quad + \underbrace{C \beta\int_{\bR^n} (|\nabla^2 A| + |\nabla A|^2)|f|^2\,dx}_{IV}.
	\end{align*}
	For some $r>0$ to be fixed later, splitting
	\begin{equation*}
	IV \leq C \beta\int_{|x|\geq r} (|\nabla^2 A| + |\nabla A|^2)|x/r|^2|f|^2 \,dx + C \beta\int_{|x|\leq r} (|\nabla^2 A| + |\nabla A|^2)|f|^2\,dx,
	\end{equation*}
	and using Young's inequality, we obtain
	\begin{align}
	III
	&\leq
	C \big( \beta\sup_{\bR^n}(|\nabla^2 A|^2 + |\nabla A|^4) + \beta^2\sup_{\bR^n}(|\nabla^3 A| + |\nabla^2 A||\nabla A|)^2 + \beta r^{-2}\sup_{B_r^{\mathsf{c}}}(|\nabla^2 A| + |\nabla A|^2)\big)\int_{\bR^n}|xf|^2\,dx   
	\\&\quad+ \int_{\bR^n} |f|^2\,dx + C \beta\sup_{B_r}(|\nabla^2 A| + |\nabla A|^2)\int_{|x|\leq r} |f|^2\,dx \label{eqn-220515-1011}
	\\&\leq
	C_1 (\beta + \beta^2 + \beta r^{-2}) \int_{\bR^n}|xf|^2\,dx + H(t) + C_1 \beta \big( \int_{B_{2r}}(r^2 |\nabla f|^2 + r^{-2} |xf|^2)\,dx \big), \label{eqn-220906-0255}
	\end{align}
	where $C_1$ is a constant depending on $(\lambda, \Lambda, n, \|A\|_{C^3})$.
	Note that to get the last inequality we applied Lemma \ref{lem-220515-0957} to $f(t,\cdot)$.
Substituting \eqref{eqn-220906-0255} back to \eqref{eqn-220515-1012}, if we choose
\begin{equation}
    r = \lambda / \sqrt{C_1} \quad \text{and} \quad \beta_0 = \beta_0 ( \lambda, \Lambda, n, \|A\|_{C^3} )\,\,\text{large enough}
\end{equation}
and such that
\begin{equation*}
    C_1 r^2 < \lambda^2, \quad C_1 (\beta_0 + \beta_0^2 + 2\beta_0 r^{-2}) < \beta_0^3\lambda^2,
\end{equation*}
then for any $\beta > \beta_0$,
	\begin{align} \label{eqn-220515-1028-1}
	\left<[\cS,\cA]f(t,\cdot), f(t,\cdot)\right> 
	\geq 
	5\beta\lambda^2\int_{\bR^n}|\nabla f(t,x)|^2\,dx + 30\beta^3\lambda^2\int_{\bR^n}|xf(t,x)|^2\,dx - H(t).
	\end{align}
	
	Now, using \eqref{eqn-220609-1118} and \eqref{eqn-220704-0848}, then substituting \eqref{eqn-220515-1028-1} back to \eqref{eqn-220704-0846}, we have
	\begin{align}
	\frac{d}{dt}&{\left(\frac{d\log(H(t))}{dt} - 2\frac{2 \mathrm{Re} \left< (a + ib)( V f + h), f \right>}{H}\right)} 
	\\=&
	2\frac{dN}{dt} 
	\\ \geq&
	\frac{2 (a^2 + b^2)}{H} \left( 5\beta\lambda^2\int_{\bR^n}|\nabla f(t,x)|^2\,dx + 30 \beta^3\lambda^2\int_{\bR^n}|xf(t,x)|^2\,dx - H(t) \right) - 2 (a^2 + b^2) (M_1^2 + M_2^2)
	\\ \geq &
	- 2 (a^2 + b^2) (1 + M_1^2 + M_2^2).
	\end{align}
	Hence, defining
	\begin{equation*}
	\widetilde{M}(t):=\int_0^t \frac{2 \mathrm{Re} \left< (a + ib)( V f + h)(s,x), f(s,x)\right>_{L^2_x} }{H(s)} \, ds,
	\end{equation*}
	we obtain
	\begin{align*}
	&\quad \frac{d^2 \left(\log(H(t))- 2 \widetilde{M} (t) + t^2 (a^2 + b^2) (1 + M_1^2 + M_2^2) \right)}{dt^2} =\frac{d^2 \log\Big( H(t) e^{-2\widetilde{M}(t) + t^2 (a^2 + b^2) (1 + M_1^2 + M_2^2) }\Big)}{dt^2}\geq 0,
	\end{align*}
	meaning that $\log\Big( H(t) e^{-2\widetilde{M}(t) + t^2 (a^2 + b^2) (1 + M_1^2 + M_2^2) }\Big)$ is convex in $t$, that is,
	\begin{equation*}
		H(t) e^{-2\widetilde{M}(t) + t^2 (a^2 + b^2) (1 + M_1^2 + M_2^2) }
		\leq
		H(0)^{1-t} \left(H(1) e^{-2\widetilde{M}(1) + (a^2 + b^2) (1 + M_1^2 + M_2^2) }\right)^t
	\end{equation*}
	From \eqref{eqn-220611-0528} and \eqref{eqn-220705-0739},
	\begin{equation*}
		|\widetilde{M}(t)| \leq \sqrt{a^2 + b^2}(M_1 + M_2) t.
	\end{equation*}
	Eventually,
	\begin{equation*}
	\begin{split}
	H(t) 
	&\leq 
	H(0)^{1-t} H(1)^t e^{2\widetilde{M}(t) - t^2 (a^2 + b^2) (1 + M_1^2 + M_2^2) + t \left( -2\widetilde{M}(1) + (a^2 + b^2) (1 + M_1^2 + M_2^2) \right)}
	\\&\leq
	C e^{(a^2 + b^2) (1 + M_1^2 + M_2^2)} H(0)^{1-t} H(1)^t.
	\end{split}
	\end{equation*}
	
	Hence, \eqref{eqn-220705-1232} is proved. We are left to bound the derivatives in \eqref{eqn-220521-0546}.
    From \eqref{eqn-220609-1118}, \eqref{eqn-220611-0601}, \eqref{eqn-220515-1028-1}, the Cauchy-Schwarz inequality, and \eqref{eqn-220705-0739}, we have
	\begin{align*}
	&\frac{d}{dt}\left(\frac{dH(t)}{dt} - 2 \mathrm{Re} \left< (a + ib)( V f + h), f \right> \right) \\
	&\quad= 
	2\frac{dD(t)}{dt}
	\\&\quad=
	2 \left((a^2 + b^2) \left< [\cS, \cA] f, f \right> + 2 \| (a\cS + ib\cA)f \|_{L_x^2}^2 + 2 \mathrm{Re} \left< (a\cS + ib\cA) f, (a + ib)( V f + h)\right>\right)
	\\&\quad\geq
	2 (a^2 + b^2) \left(5\beta\lambda^2\int_{\bR^n}|\nabla f(t,x)|^2\,dx + 30 \beta^3\lambda^2\int_{\bR^n}|xf(t,x)|^2\,dx - H(t)\right)
	\\&\qquad  +
	4 \| (a\cS + ib\cA)f \|_{L_x^2}^2 - 4 \| (a\cS + ib\cA)f \|_{L_x^2} \cdot \|(a + ib)( V f + h)\|_{L_x^2}
	\\&\quad \geq
	2 (a^2 + b^2) \left(5\beta\lambda^2\int_{\bR^n}|\nabla f(t,x)|^2\,dx + 30 \beta^3\lambda^2\int_{\bR^n}|xf(t,x)|^2\,dx - H(t)\right) - \|(a + ib)( V f + h)\|_{L_x^2}^2
	\\&\quad\geq
	2 (a^2 + b^2) \left( 5  \beta\lambda^2\int_{\bR^n}|\nabla f(t,x)|^2\,dx + 30 \beta^3\lambda^2\int_{\bR^n}|xf(t,x)|^2\,dx - (1 + M_1^2 + M_2^2) H(t) \right).
	\end{align*}
	Multiplying $t(1-t)$ to both sides, integrating in time over $[0,1]$, then doing integration by parts, we obtain
	\begin{align}
	&2 (a^2 + b^2) \big( 5\beta\lambda^2\int_0^1 \int_{\bR^n}t(1-t)|\nabla f(t,x)|^2\,dxdt + 30 \beta^3\lambda^2\int_0^1\int_{\bR^n}t(1-t)|xf(t,x)|^2\,dxdt 
	\\ & \quad - (1 + M_1^2 + M_2^2) \int_0^t t(1-t)H(t)\,dt \big) 
	\\ &\leq
	\int_0^1 t(1-t)\frac{d^2H(t)}{dt^2}\,dt - 2 \mathrm{Re} \int_0^1 t(1-t)\frac{d}{dt} \left<(a + ib)( V f + h) , f \right>_{L^2_x}\,dt
	\\ &=
	\cancel{t(1-t)\frac{dH}{dt}\bigg\vert_0^1} - \int_0^1 (1-2t)\frac{dH}{dt}\,dt - \cancel{2 \mathrm{Re} \left( t(1-t) \left< (a + ib)( V f + h) , f \right>_{L^2_x}\bigg\vert_0^1 \right)} 
	\\&\quad+ 2 \mathrm{Re} \int_0^1 (1-2t)\left< (a + ib)( V f + h) , f \right>_{L^2_x} \, dt 
	\\\quad&\leq 
	-(1-2t)H(t)\big\vert_0^1 - 2\int_0^1 H(t)\,dt + 2 \sqrt{ a^2 + b^2 } ( M_1 + M_2 ) \int_0^1 H(t) \, dt 
	\\&=
	H(1) + H(0) + 2\left( 2 \sqrt{ a^2 + b^2 } ( M_1 + M_2 ) - 1 \right)\int_0^1 H(t) \, dt.\label{eqn-220521-0509}
	\end{align}
	Furthermore, from \eqref{eqn-220705-1232},
	\begin{equation}\label{eqn220521-0508-2}
	H(t) \leq C e^{(a^2 + b^2) (1 + M_1^2 + M_2^2)} \max\{H(0),H(1)\},\quad \forall t\in(0,1).
	\end{equation}
	Substituting \eqref{eqn220521-0508-2} and the following point-wise inequality
	\begin{equation}\label{eqn220521-0508-1}
	\beta|\nabla f|^2 + \beta^3|xf|^2 = \beta|\nabla (e^{\beta|x|^2}u)|^2 + \beta^3|xe^{\beta|x|^2}u|^2 \geq \frac{\beta}{4}|e^{\beta|x|^2}\nabla u| + \frac{\beta^3}{4}|e^{\beta|x|^2}xu|
	\end{equation}
	back to \eqref{eqn-220521-0509}, we reach \eqref{eqn-220521-0546}.
	
	This completes the proof of Lemma \ref{lem-220711}. 
\end{proof}

\subsubsection{Decay and regularity of solutions to \eqref{eqn-220705-0804}}\label{subsub-220711-2}
In this section, we first study the evolution of weighted $L^2_x$-norms. In particular, in the dissipative case ($a>0$ in Lemma \ref{lem-220705-1134}), solutions with Gaussian decay intial data will also have (possibly slower) Gaussian decay at later times. Actually our proof for Lemma \ref{lem-220705-1134} also works for a space-time dependent operator $\cL$, i.e., with $a_{kj}=a_{kj}(t,x)$.
\begin{lemma}\label{lem-220705-1134}
	Let $u \in L^\infty([0,1], L^2(\mathbb{R}^n)) \cap L^2([0,1],H^1(\mathbb{R}^n))$ solve \eqref{eqn-220705-0804}
	with
	\begin{equation*}
	 a^2 + b^2 \neq 0, \quad a \geq 0,\quad \text{and} \quad V = V(t , x) \in L^\infty([0,1]\times \mathbb{R}^n) \,\,\text{not necessarily being real-valued}.
	\end{equation*}
	Furthermore, assume that for some $\gamma>0$,
	\begin{equation*}
		e^{\gamma |x|^2} u(0) \in L^2(\mathbb{R}^n) , \quad e^{\gamma |x|^2} g(t , x) \in L^\infty([0,1],L^2(\mathbb{R}^n)).
	\end{equation*}
	Then, for any $t\in[0,1]$, $u(t)$ has a Gaussian decay:
	\begin{align}
	 \| e^{\frac{\gamma \lambda a \abs{x}^2}{\lambda a + 4 \gamma (\lambda a^2 \Lambda + 4b^2 C \norm{A}_{L^\infty}^2 ) t}} u(t) \|_{L^2_x} & \leq e^{ t \|a \mathrm{Re} V - b \mathrm{Im} V\|_{L^\infty_{t,x}} } \| e^{\gamma |x|^2} u(0) \|_{L^2_x} \\
	 & \quad + \sqrt{a^2 + b^2} \int_0^t e^{ (t -s) \|a \mathrm{Re} V - b \mathrm{Im} V\|_{L^\infty_{t,x}} } \|e^{\frac{\gamma \lambda a \abs{x}^2}{\lambda a + 4 \gamma (\lambda a^2 \Lambda + 4b^2 C \norm{A}_{L^\infty}^2 ) s}} g(s)\|_{L^2_x} \, ds   ,
	\end{align}
	where $C$ is a constant depending only on the spatial dimension $n$.
\end{lemma}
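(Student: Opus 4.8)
The plan is to run a weighted $L^2$-in-$x$ energy estimate with a time-dependent Gaussian weight. Set
\begin{equation*}
\varphi(t,x) := \psi(t)\abs{x}^2, \qquad \psi(t) = \frac{\gamma\lambda a}{\lambda a + 4\gamma(\lambda a^2\Lambda + 4b^2 C\norm{A}_{L^\infty}^2)\, t},
\end{equation*}
and $w := e^{\varphi}u$. A direct computation shows $\psi(0)=\gamma$, so $\varphi(0,\cdot)=\gamma\abs{\cdot}^2$, and that $\psi$ is the solution of the ODE $\psi'+ c\,\psi^2 = 0$, $\psi(0)=\gamma$, with $c = 4a\Lambda + 16 C b^2\norm{A}_{L^\infty}^2/(\lambda a)$; the dimensional constant $C=C(n)$ is fixed large enough below. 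Writing $K := \norm{a\,\mathrm{Re}\,V - b\,\mathrm{Im}\,V}_{L^\infty_{t,x}}$, the claimed estimate is exactly
\begin{equation*}
\norm{w(t)}_{L^2_x} \le e^{tK}\norm{w(0)}_{L^2_x} + \sqrt{a^2+b^2}\int_0^t e^{(t-s)K}\norm{e^{\varphi(s,\cdot)}g(s)}_{L^2_x}\,ds,
\end{equation*}
which, by Grönwall/Duhamel, follows once we establish the differential inequality $\tfrac{d}{dt}\norm{w}_{L^2_x} \le K\norm{w}_{L^2_x} + \sqrt{a^2+b^2}\,\norm{e^{\varphi}g}_{L^2_x}$ (and recall $\varphi(0,\cdot)=\gamma\abs{\cdot}^2$ gives the stated data term).

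For the differential inequality I would compute, using \eqref{eqn-220705-0804} and $\partial_t\varphi = \psi'\abs{x}^2$,
\begin{equation*}
\tfrac12\tfrac{d}{dt}\norm{w}_{L^2_x}^2 = \int\psi'\abs{x}^2\abs{w}^2\,dx + \mathrm{Re}\inner{(a+ib)\,e^{\varphi}\cL(e^{-\varphi}w),\, w}_{L^2_x} + \mathrm{Re}\inner{(a+ib)Vw, w}_{L^2_x} + \mathrm{Re}\inner{(a+ib)e^{\varphi}g, w}_{L^2_x}.
\end{equation*}
Integrating by parts in the second-order term and expanding $\partial_j(e^{-\varphi}w)$, $\partial_k(e^{\varphi}\bar w)$, the reality and symmetry of $A$ make the gradient$\times$potential cross terms purely imaginary, so after taking real parts one is left with
\begin{equation*}
\mathrm{Re}\inner{(a+ib)e^{\varphi}\cL(e^{-\varphi}w),w} = -a\int a_{kj}\partial_j w\,\partial_k\bar w\,dx + a\int a_{kj}(\partial_j\varphi)(\partial_k\varphi)\abs{w}^2\,dx + 2b\int a_{kj}(\partial_k\varphi)\,\mathrm{Im}(\partial_j w\,\bar w)\,dx.
\end{equation*}
By the ellipticity \eqref{eqn-221030-0836} the first term is $\le -a\lambda\norm{\nabla w}_{L^2_x}^2$ and the second is $\le 4a\Lambda\psi^2\int\abs{x}^2\abs{w}^2$; for the last term I would use $\abs{Ax}\le\norm{A}_{L^\infty}\abs{x}$ together with Young's inequality (with weight $a\lambda$) to bound it by $a\lambda\norm{\nabla w}_{L^2_x}^2 + 4b^2\norm{A}_{L^\infty}^2\psi^2\int\abs{x}^2\abs{w}^2/(\lambda a)$, so the two $a\lambda\norm{\nabla w}^2$ contributions cancel and only a multiple $c_0\psi^2\int\abs{x}^2\abs{w}^2$ survives, $c_0 = 4a\Lambda + 4b^2\norm{A}_{L^\infty}^2/(\lambda a)$. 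The potential term equals $\int(a\,\mathrm{Re}\,V - b\,\mathrm{Im}\,V)\abs{w}^2 \le K\norm{w}_{L^2_x}^2$ (since $V$ is a multiplication operator), and the source term is $\le \sqrt{a^2+b^2}\,\norm{e^{\varphi}g}_{L^2_x}\norm{w}_{L^2_x}$. Collecting everything and taking $C(n)$ large enough that $c \ge c_0$, the term $\int(\psi' + c_0\psi^2)\abs{x}^2\abs{w}^2$ is $\le 0$, whence $\tfrac{d}{dt}\norm{w}_{L^2_x}^2 \le 2K\norm{w}_{L^2_x}^2 + 2\sqrt{a^2+b^2}\,\norm{e^{\varphi}g}_{L^2_x}\norm{w}_{L^2_x}$ and the differential inequality follows by a standard regularization ($\norm{w}\mapsto\sqrt{\norm{w}^2+\delta}$, $\delta\to 0$).

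The main obstacle is \emph{rigor rather than algebra}: since $u$ is only assumed in $L^\infty_tL^2_x\cap L^2_tH^1_x$ while $e^{\varphi}$ is unbounded — indeed $e^{\varphi(t,\cdot)}u(t)\in L^2$ for $t\in(0,1)$ is part of the conclusion — neither the differentiation of $\norm{w}_{L^2_x}^2$ nor the integrations by parts are a priori legitimate. I would therefore carry out the whole computation with the bounded smooth truncations $\varphi_N(t,x) := \psi(t)\abs{x}^2/(1+\abs{x}^2/N^2)$, for which $e^{\varphi_N}$, $\nabla\varphi_N$ and $\partial_t\varphi_N$ are bounded for each fixed $N$. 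The key point is that the geometric factors produced by the truncation are \emph{favorable}: if one retains them rather than crudely bounding $\abs{\nabla\varphi_N}\le 2\psi\abs{x}$, the pointwise bad term becomes $\psi'(t)\abs{x}^2(1+\abs{x}^2/N^2)^{-1} + c_0\,\psi(t)^2\abs{x}^2(1+\abs{x}^2/N^2)^{-4}$, which is $\le 0$ uniformly in $N$ because $\psi'=-c\psi^2$ with $c\ge c_0$ and $(1+s)^3\ge 1$. This yields the estimate for $e^{\varphi_N}u$ with $N$-independent constants; letting $N\to\infty$ with Fatou's lemma on the left and dominated/monotone convergence on the right (finiteness of the right-hand side being precisely the hypotheses $e^{\gamma\abs{x}^2}u(0)\in L^2$ and $e^{\gamma\abs{x}^2}g\in L^\infty_tL^2_x$, together with $\psi\le\gamma$) gives the lemma. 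The degenerate case $a=0$, in which the weight collapses to $1$ for $t>0$, is handled separately and reduces to the plain $L^2$ energy identity, using $\mathrm{Re}\inner{ib\,\cL u,u}=0$.
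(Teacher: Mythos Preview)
Your proposal is correct and follows essentially the same strategy as the paper: compute the evolution of $\norm{e^{\varphi}u}_{L^2_x}^2$ with a time-dependent weight $\varphi=\psi(t)\abs{x}^2$, use ellipticity to kill the dissipative $-a\lambda\norm{\nabla w}^2$ against the cross term coming from $b$ via Young's inequality, and choose $\psi$ to solve the resulting Riccati ODE so that the residual $\int(\psi'+c_0\psi^2)\abs{x}^2\abs{w}^2$ drops out. The only notable difference is the regularization device: the paper truncates with the mollified weight $\alpha(t)\bigl((\min\{\abs{x},R\})^2\ast\eta_\epsi\bigr)$ and sends $\epsi\to 0$, $R\to\infty$, whereas you use the smooth rational truncation $\psi(t)\abs{x}^2/(1+\abs{x}^2/N^2)$ and exploit the favorable factor $(1+\abs{x}^2/N^2)^{-4}\le(1+\abs{x}^2/N^2)^{-1}$ to keep the sign; both achieve the same end.
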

\begin{proof}[Proof of Lemma \ref{lem-220705-1134}]
	Let
	\begin{equation*}
		v := e^\phi u\quad\text{with}\quad\phi=\phi(t,x)
	\end{equation*}
	being a weight function to be chosen later. In the following, all the integration by parts are symbolic and will be justified once we fix $\phi$. In the proof, we write
	\begin{equation*}
		\left<\cdot,\cdot\right> = \left<\cdot,\cdot\right>_{L^2_x}, \quad \text{and} \quad e^\phi \cL e^{- \phi} = \cS + \cA
	\end{equation*}
	by the decomposition into symmetric($\cS$) and antisymmetric($\cA)$ parts with respect to $\left<\cdot,\cdot\right>_{L^2_x}$. 
    Then $v$ satisfies
	\begin{align}
	\p_t v & = (a + ib) e^\phi \cL (e^{-\phi} v) + (a +ib) e^{\phi} g + (a + ib) V v + \partial_t \phi v\\
	& = (a + ib) (\mathcal{S} + \mathcal{A}) v + (a +ib) e^{\phi} g + (a+ ib) (\mathrm{Re} V + i \mathrm{Im} V) v + \partial_t \phi v \\
	& = : \widetilde{S} v + \widetilde{A} v + (a + ib) e^{\phi} g 
	\end{align}
	where 
	\begin{align*}
		\widetilde{S} & = (a \mathcal{S} + ib \mathcal{A}) + (a \mathrm{Re} V - b \mathrm{Im} V) + \p_t \phi,\\
		\widetilde{A} & = (a \cA + ib \cS) + (a\mathrm{Im} V + b \mathrm{Re} V).
	\end{align*}
	Then, 
	\begin{align}
	\partial_t \| v \|_{L_x^2}^2 & = 2 \mathrm{Re} \inner{\widetilde{\mathcal{S}}v, v} + 2 \mathrm{Re} \inner{(a+ib)e^{\phi}g , v} . \label{eqn-220701-0100-1}
	\end{align}
	Now we bound
	\begin{equation*}
		\mathrm{Re} \inner{\widetilde{\mathcal{S}}v, v} = \mathrm{Re} \inner{(a \mathcal{S} + ib \mathcal{A})v , v} + \mathrm{Re} \inner{(a \mathrm{Re} V - b \mathrm{Im} V) v,v} + \mathrm{Re} \inner{(\partial_t \phi)v, v}
	\end{equation*}
term by term. First, we consider the inner product involving $\mathcal{S}$ . By doing integration by parts, we write 
    \begin{align}
	\mathrm{Re} \inner{\mathcal{S} v,v} & =  \inner{\mathcal{S}v,v } =  \int \partial_k (a_{kj} \partial_j v) \overline{v} \, dx + \int \partial_j \phi \partial_j \phi a_{kj} v \overline{v} \, dx \\
	& = - \int a_{kj} (\partial_j v) (\partial_j \overline{v}) \, dx + \int a_{kj} (\partial_k \phi ) (\partial_j \phi )v \overline{v} \, dx. \label{eqn-221021-0756-1}
	\end{align}
For the $\mathcal{A}$ term, similarly, we have
 	\begin{align}
	\mathrm{Re} \inner{i \mathcal{A} v,v} & = \mathrm{Im} \inner{ \mathcal{A} v,v} \\
	& = - \mathrm{Im}  \int (\partial_j \phi ) \partial_k (a_{kj} v) \overline{v} \, dx  - \mathrm{Im} \int  (\partial_k \phi ) a_{kj} (\partial_j v)  \overline{v} \, dx  - \cancel{\mathrm{Im}  \int (\partial_{kj}^2 \phi) a_{kj} v \overline{v} \, dx } \\
	& = \cancel{ \mathrm{Im}  \int a_{kj} (\partial_{kj}^2 \phi) v \overline{v} \, dx } + \mathrm{Im}  \int a_{kj} (\partial_j \phi ) v (\partial_k \overline{v}) \, dx  - \mathrm{Im}  \int (\partial_k \phi) a_{kj} (\partial_j v) \overline{v} \, dx\\
	& = 2 \mathrm{Im} \int (\partial_k \phi) a_{kj} (\partial_j v) \overline{v} \, dx , \label{eqn-221021-0756-2}
	\end{align}
	where in the last line we used 
	\begin{align}
	\left<A (\nabla \phi) v , \nabla v \right> - \left<\nabla v , A (\nabla \phi) v \right> = 2 i \mathrm{Im}\left<A \nabla \phi v , v \right>.
	\end{align}
	Combining \eqref{eqn-221021-0756-1}-\eqref{eqn-221021-0756-1}, and noting
	\begin{align}
	\lambda \leq (a_{kj}) \leq \Lambda,   
	\end{align}
	we have	
	\begin{align}
	\mathrm{Re} \inner{\widetilde{\mathcal{S}}v, v } & = \mathrm{Re}  \inner{ (a \mathcal{S} + ib \mathcal{A})v, v } + \mathrm{Re} \inner{ (a \mathrm{Re}  V - b \mathrm{Im} V)v,v} + \mathrm{Re}  \inner{\partial_k \phi v,v} \\
	& = -a \int a_{kj} (\partial_j v) (\partial_k \overline{v}) \, dx + a \int a_{kj} (\partial_k \phi) (\partial_j \phi) v \overline{v} \, dx \\
	& \quad + 2 b \mathrm{Im} \int (\partial_k \phi) a_{kj} (\partial_j v) \overline{v} \, dx + \int (a \mathrm{Re} V- b \mathrm{Im} V + \partial_k \phi) \abs{v}^2 \, dx \\
	& \leq -a \lambda \|\nabla v\|_{L^2_x}^2 + a \Lambda \|v \nabla \phi\|_{L^2_x}^2 + 2 b \mathrm{Im} \int (\partial_k \phi) a_{kj} (\partial_j v) \overline{v} \, dx + \int (a \mathrm{Re} V- b \mathrm{Im} V + \partial_k \phi) \abs{v}^2 \, dx.
	\end{align}
Furthermore,
	\begin{align}
	2 b \mathrm{Im} \int (\partial_k \phi) a_{kj} (\partial_j v) \overline{v} \, dx  
	& \leq  
	C  | b |  \Lambda^2 \int \frac{\sqrt{\lambda a}}{\sqrt{\lambda a}} \abs{\nabla \phi} \abs{\nabla v} \abs{v} \, dx \\
	& \leq \int \lambda a \abs{\nabla v}^2 + C \frac{b^2}{\lambda a} \Lambda^2 \abs{\nabla \phi}^2 \abs{v}^2 \, dx \\
	& = \lambda a \norm{\nabla v}_{L^2_x}^2 + C \frac{b^2}{\lambda a} \Lambda^2 \int \abs{\nabla \phi}^2 \abs{v}^2 \, dx
	\end{align}
	where $C=C(n)$.  
    Hence,
	\begin{align}
	\mathrm{Re} \inner{\widetilde{\mathcal{S}}v,v} & \leq \int (a \mathrm{Re} V - b \mathrm{Im}V + \partial_t \phi ) \abs{v}^2 \, dx + a \Lambda \int \abs{\nabla \phi}^2 \abs{v}^2 \, dx + C \frac{b^2}{\lambda a} \Lambda^2 \int \abs{\nabla \phi}^2 \abs{v}^2 \, dx \\
	& \leq \norm{a \mathrm{Re} V - b \mathrm{Im} V }_{L^\infty} \norm{v}_{L_x^2}^2 + \int [(a \Lambda) \abs{\nabla \phi}^2 + C \frac{b^2}{\lambda a} \Lambda^2 \abs{\nabla \phi}^2 + \partial_t \phi ] \abs{v}^2 \, dx . \label{eqn-220701-0100-2}
	\end{align}
	We are going to choose a proper $\phi$ such that
	\begin{equation}\label{eqn-220701-1244}
		(a \Lambda) \abs{\nabla \phi}^2 + C \frac{b^2}{\lambda a}  \Lambda^2 \abs{\nabla \phi}^2 + \partial_t \phi \leq 0.
	\end{equation}
	For simplicity, we choose $\phi$ in the format of
	\begin{equation*}
		\phi = \alpha(t) \left( \left( \min\{ |x|,R \} \right)^2 \ast \eta_\epsi \right) ,\quad 
		\text{where} \,\, 
		R > \epsi >0\,\,
		\text{and} \,\,\eta_\epsi\in C^\infty (B_\epsi) \,\,\text{is a usual mollifier} .
	\end{equation*}
	With such choice, the requirement \eqref{eqn-220701-1244} becomes
	\begin{align}
	    ( 4a \Lambda + C\frac{4b^2}{\lambda a} \Lambda^2  ) \alpha^2 (t) + \alpha' (t) \leq 0 ,\quad  \alpha'(t)\leq 0,
	\end{align}
for which we solve
	\begin{align}
	\begin{cases}
	\alpha'(t) = - 4 (a \Lambda + C \frac{4b^2}{\lambda a} \norm{A}_{L^\infty}^2) \alpha^2 (t) ,\\
	\alpha(0) = \gamma .
	\end{cases}
	\end{align}
	This yields
	\begin{align}
	\alpha(t)= \frac{\gamma \lambda a}{\lambda a + 4 \gamma (\lambda a^2 \Lambda + 4b^2 \norm{A}_{L^\infty}^2 C) t}.
	\end{align}
    Combining \eqref{eqn-220701-0100-1}, \eqref{eqn-220701-0100-2}, and \eqref{eqn-220701-1244}, we have
	\begin{align}
	\partial_t \norm{v}_{L^2_x}^2 \leq 2 \norm{a \mathrm{Re} V - b \mathrm{Im} V}_{L^\infty_{t,x}} \norm{v(t)}_{L^2_x}^2 + 2 \sqrt{a^2 + b^2} \norm{e^{\phi} g}_{L^2_x} \norm{v(t)}_{L^2_x},
	\end{align}
	from which,
	\begin{equation*}
		\p_t \norm{v}_{L^2_x} \leq \norm{a \mathrm{Re} V - b \mathrm{Im} V}_{L^\infty_{t,x}} \norm{v(t)}_{L^2_x} +  \sqrt{a^2 + b^2} \norm{e^{\phi} g}_{L^2_x}.
	\end{equation*}
	Hence,
	\begin{equation*}
		\norm{v(t)}_{L^2_x} \leq e^{t \norm{a \mathrm{Re} V - b \mathrm{Im} V}_{L_{t,x}^\infty} }\norm{v(0)}_{L^2_x} + \sqrt{a^2 + b^2} \int_0^t e^{ (t - s) \norm{a \mathrm{Re} V - b \mathrm{Im} V}_{L^\infty_{t,x}} }\norm{e^{\phi(s)} g(s)}_{L^2_x} \, ds.
	\end{equation*}
	Passing $\epsi\rightarrow 0, R\rightarrow \infty$, we finish the proof of Lemma \ref{lem-220705-1134}.
\end{proof}
Combining Lemma \ref{lem-220705-1134}, semigroup theories, and standard parabolic regularity theories, we have the following regularity result.
The proof of Lemma \ref{lem-220706-1211} can be found in Appendix \ref{sec AppC}.
\begin{lemma}\label{lem-220706-1211}
	Suppose $u \in C([0,1],L^2(\mathbb{R}^n))$ is a solution to \eqref{eqn-220630-1118}  with
	\begin{equation*}
	    a>0,\, b\in \bR, V=V(x) \,\,\text{being real-valued and bounded}.
	\end{equation*}
	If for some $\beta>0$ 
	\begin{equation} \label{eqn-220706-1234}
		e^{\beta |x|^2} u(t,x) \in L^2(\mathbb{R}^n)\quad \forall t \in (0,1),
	\end{equation}
	then we have $u \in L^2_{t,loc}( (  0,1  ) , H^2_{x,loc}(\mathbb{R}^n))$ and for all $\epsi>0$,
	\begin{equation*}
		 |u(t,x)| + |\nabla_x u(t,x)|= o ( e^{- (\beta - \epsi) |x|^2} ) \quad \text{as} \,\, |x|\rightarrow \infty, \quad \forall t\in (0,1).
	\end{equation*}
\end{lemma}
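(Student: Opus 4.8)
The plan is to exploit that, since $a>0$, equation \eqref{eqn-220630-1118} is uniformly parabolic: the real part of the principal symbol of $(a+ib)\cL$ equals $-a\,a_{kj}\xi_k\xi_j\le-a\lambda|\xi|^2$ by \eqref{eqn-221030-0836}, so $(a+ib)(\cL+V)$ generates an analytic semigroup on $L^2(\bR^n)$ and the classical interior $L^p$ parabolic theory applies. First I would record the qualitative interior regularity: a localized energy estimate for \eqref{eqn-220630-1118} (multiply by $\overline u$ and take the real part; since $A$ is real symmetric and $V$ real, the second- and zeroth-order contributions are real, and the dissipative term $-a\,a_{kj}\p_j v\,\overline{\p_k v}$ controls $\|\nabla u\|_{L^2}$) yields $u\in L^2_{t,loc}((0,1),H^1_x)$, and then the interior parabolic $W^{2,1}_2$ estimate upgrades this to $u\in L^2_{t,loc}((0,1),H^2_{x,loc}(\bR^n))$, which is the first assertion; in particular $u$ is regular enough on every compact subinterval of $(0,1)$ for Lemma \ref{lem-220705-1134} to be applicable. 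It then remains to prove the pointwise decay.

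Next I would establish a weighted $L^2$ bound that is uniform on a short time interval. Fix $t\in(0,1)$ and $\epsi>0$, and pick $\delta>0$ small. Apply Lemma \ref{lem-220705-1134} on $[t-\delta,t]$ with starting time $t_1:=t-\delta$, with $g\equiv 0$ and $\gamma=\beta$; this is legitimate because $e^{\beta|x|^2}u(t_1,\cdot)\in L^2$ by hypothesis. Since the resulting Gaussian exponent $\alpha(\tau)$ increases to $\beta$ as $\tau\downarrow 0$, for $\delta$ small enough we obtain
\[
\sup_{s\in[t-\delta,t]}\big\|e^{(\beta-\epsi)|x|^2}u(s,\cdot)\big\|_{L^2_x}\le e^{a\delta\|V\|_{L^\infty}}\big\|e^{\beta|x|^2}u(t_1,\cdot)\big\|_{L^2_x}=:M<\infty ,
\]
and hence $\iint_{[t-\delta,t]\times\bR^n}e^{2(\beta-\epsi)|x|^2}|u|^2\,dx\,ds\le\delta M^2<\infty$. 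The role of Lemma \ref{lem-220705-1134} here, rather than the bare hypothesis, is precisely to turn ``finite at each fixed time'' into a bound uniform on a time interval, at the price of an arbitrarily small loss in the Gaussian rate.

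Finally I would pass from this weighted $L^2$ decay to pointwise decay by a translation-and-bootstrap argument. Given $x_0$ with $|x_0|$ large, set $u_{x_0}(s,y):=u(s,x_0+y)$ on the backward parabolic cylinder $Q:=(t-\rho^2,t]\times B_\rho(x_0)$ with $\rho>0$ fixed so small that $t-\rho^2>0$ and $\rho^2<\delta$. It solves a parabolic equation whose coefficients $a_{jk}(x_0+\cdot),\,V(x_0+\cdot)$ are uniformly elliptic with bounds independent of $x_0$ (by \eqref{eqn-221030-0836} and the standing boundedness/regularity of $A$ and $V$), so the interior parabolic estimates on $Q$ carry constants independent of $x_0$. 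A standard bootstrap --- iterating the interior $W^{2,1}_q$ estimate, treating $Vu$ as an $L^q$ inhomogeneity since $V\in L^\infty$, together with parabolic Sobolev embedding, until $q>n+2$ --- puts $u_{x_0}$ in $C^1$ on the half-sized cylinder, and since parabolic cylinders include their top time slice,
\[
|u(t,x_0)|+|\nabla u(t,x_0)|\le C\|u\|_{L^2(Q)}\le C\,e^{-(\beta-\epsi)(|x_0|-\rho)^2}\Big(\iint_{[t-\rho^2,t]\times\bR^n}e^{2(\beta-\epsi)|x|^2}|u|^2\Big)^{1/2} ,
\]
with $C$ independent of $x_0$. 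By the previous step the last factor is finite; since $(|x_0|-\rho)^2=|x_0|^2-O(|x_0|)$, replacing $\epsi$ by $2\epsi$ absorbs the linear error and gives $|u(t,x_0)|+|\nabla u(t,x_0)|=o(e^{-(\beta-\epsi)|x_0|^2})$ as $|x_0|\to\infty$. As $\epsi>0$ was arbitrary, this is the claim.

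The main obstacle I anticipate is bookkeeping rather than conceptual: checking that the interior regularity really does suffice to legitimate the application of Lemma \ref{lem-220705-1134} (the $H^1_{loc}$ regularity and the integrations by parts inside its proof), and --- more importantly --- ensuring that \emph{every} constant in the local parabolic estimates of the last step is uniform under the spatial translation $x\mapsto x_0+x$. This uniformity, resting on \eqref{eqn-221030-0836} and the global control of $A$, is exactly what converts a single-point pointwise bound into a genuine decay \emph{rate}; everything else is the routine ``weighted $L^2$ decay $\Rightarrow$ pointwise decay via local parabolic regularity'' mechanism.
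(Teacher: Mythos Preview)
Your proposal is correct and follows essentially the same strategy as the paper's proof: use Lemma~\ref{lem-220705-1134} to upgrade the pointwise-in-time weighted $L^2$ hypothesis to a bound uniform on a short time interval (at the cost of an $\epsi$-loss in the Gaussian exponent), and then invoke interior parabolic regularity to pass to pointwise decay of $u$ and $\nabla u$. The one technical difference is in how the parabolic regularity step is set up. The paper uses the semigroup identity
\[
u(t)=e^{\frac{a\delta}{2}(\cL+V)}\,e^{(\frac{a\delta}{2}+ib\delta)(\cL+V)}\,u(t-\delta)
\]
to isolate a \emph{purely real} parabolic flow $s\mapsto e^{as(\cL+V)}w_0$ passing through $u(t)$ at $s=\delta/2$, and then cites standard regularity for $\p_s-a(\cL+V)$. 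You instead work directly with the complex-coefficient operator $\p_t-(a+ib)(\cL+V)$ and appeal to $W^{2,1}_q$ estimates for it; this is equally valid since $a>0$ keeps the principal symbol in the parabolic cone, but it does require the (still classical) $L^p$ theory for parabolic systems with complex coefficients rather than the textbook real scalar case. Your translation-and-bootstrap argument spells out in detail what the paper compresses into ``standard parabolic regularity theory,'' and your observation that uniformity of the local constants in $x_0$ is the crux of the decay \emph{rate} is exactly right.
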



\subsubsection{$Log$-convexity property with dissipation (Proposition \ref{prop-220704-0508})}\label{subsub-220711-3}

\begin{proof}[Proof of Proposition \ref{prop-220704-0508}]
First, applying Lemma \ref{lem-220705-1134}, we obtain a sub-Gaussian decay $e^{\beta |x|^{2-\delta}} u(t,x) \in L^2(\mathbb{R}^n)$ for all $t\in(0,1)$ and all small $\delta>0$.
From this, we can employ the integartion by parts and prove a $log$-convexity result similar to \eqref{eqn-220705-1232}, but with $e^{ \beta |x|^2}$ being replaced by $ e^{\beta |x|^{2-2\delta}} $, which in particular, implies
	\begin{equation}\label{eqn-220908-0900}
		\sup_{ t \in (0,1) } \|e^{\beta |x|^{2 - 2\delta}} u(t,x)\|_{L^2_x} \leq C \quad \text{with $C$ independent of} \,\, \delta.
	\end{equation}
	The proof is very similar to that of Lemma \ref{lem-220711}, which we sketch below. For a usual cutoff function $\eta = \eta(x) \in C^\infty_c (B_1^{\mathsf{c}})$ with $\eta = 1$ on $B_2^{\mathsf{c}}$, let
	\begin{equation*}
		f^\delta = e^{\beta |x|^{2 - \delta}} u \eta.
	\end{equation*}
	Then $f^\delta$ solves
	\begin{equation*}
		\p_t f^\delta = (a + ib) (e^{ \beta |x|^{2 - \delta}} (\cL + V) e^{ - \beta |x|^{2 - \delta}} f^\delta + h^\delta) ,
	\end{equation*}
	where
	\begin{equation*}
		h^\delta = - e^{\beta |x|^{2 - \delta}} [\cL , \eta] u \in C^\infty_c ( [0,1] \times (B_2 \setminus B_1) ).
	\end{equation*}
	Define $\cS^{\delta}$ and $\cA^{\delta}$ from the decomposition
	\begin{equation*}
		e^{\beta |x|^{2-\delta}} \cL e^{-\beta |x|^{2-\delta}} = \cS^{\delta} + \cA^{\delta}
	\end{equation*}
	into symmetric and antisymmetric parts. Direct computation shows
	\begin{align*}
		& \quad \left< [\cS^\delta, \cA^\delta] f , f \right>_{L^2_{x}}
		\\&= 
		4 \beta (2-\delta) \int a_{kj} a_{mk} |x|^{-\delta} \p_j f \p_m f \, dx+ \beta \delta O(1) \int |x|^{-\delta} |\nabla f|^2 \, dx+ \beta \int |x| |\nabla A| |x|^{-\delta} |\nabla f|^2 \, dx
		\\&\quad +
		O(1) \beta \int \left( |x|^{-\delta - 1} + |x|^{-\delta} |\nabla A| + |x|^{1-\delta} |\nabla^2 A| + |x|^{1-\delta} |\nabla A|^2 \right) |f| |\nabla f|\, dx
		\\&\quad +
		4 \beta^3 (2-\delta)^3 \int |x|^{-3 \delta} x_l x_k a_{jl} a_{kj} |f|^2 \, dx + O(1) \beta^3 \delta \int |x|^{2 - 3a} |f|^2\, dx + O(1) \beta^3 \int |x|^{3-3a}	|\nabla A| |f|^2\, dx
		\\&\quad + O(1) \beta \int \left( |x|^{-\delta - 1} |\nabla A| + |x|^{-\delta} |\nabla^2 A| + |x|^{1-\delta} |\nabla^3 A| + |x|^{-\delta - 2} + |x|^{1 - \delta} |\nabla A| |\nabla^2 A| + |x|^{-\delta}|\nabla A|^2 \right) |f|^2\, dx .
 	\end{align*}
 	Further following the computation \eqref{eqn-220508-0350-1} - \eqref{eqn-220515-1012} and noticing $f^\delta = 0$ when $|x| \leq 1$, such commutator term is nonnegative if we choose $\delta$ and $\epsi_0$ in \eqref{eqn-220512-1132} to be sufficiently small and $\beta$ sufficiently large. The rest of the proof follows from similar steps in Lemma \ref{lem-220711}. Here, the Gaussian decay in Lemma \ref{lem-220705-1134} together with the reguarity and derivative decay in Lemma \ref{lem-220706-1211} guarantee that all the integration by parts are justified. Hence, \eqref{eqn-220908-0900} is proved.
	
	From \eqref{eqn-220908-0900} and the monotone convergence theorem, we know 
	\begin{equation*}
	e^{\beta |x|^2 } u(t,x) \in L_x^2(\mathbb{R}^n) \quad \forall t \in (0,1).
	\end{equation*}
	Using Lemma \ref{lem-220706-1211}, we know $u \in L^2_{t,loc} H^2_{x,loc}$ and for all $\epsi > 0$,
	\begin{equation*}
		|u(t,x)| + |\nabla_x u(t,x)|= o ( e^{- (\beta - \epsi) |x|^2} ).
	\end{equation*}
	Hence, we can apply Lemma \ref{lem-220711} with $\beta$ being replaced by $\beta - 2 \epsi$. Finally, Proposition \ref{prop-220704-0508} follows from passing $\epsi$ to zero and another application of the monotone convergence theorem.
\end{proof}

\subsection{Proof of Theorem \ref{thm-220515-0921} and Corollary \ref{cor-220515-1117}} \label{ssec 3.2}

\begin{proof}[Proofs of Theorem \ref{thm-220515-0921}]
	Define
	\begin{equation}\label{eq u_e}
		u_\epsi (t) := e^{t \epsi (\cL + V)} u(t) ,
	\end{equation}
	to be the parabolic flow approximation of $u$. From the semigroup property
\begin{equation} \label{eqn-220706-1230}
e^{(z_1 + z_2)(\cL + V)} = e^{z_1 (\cL + V)} e^{z_2 (\cL + V)}, \quad \forall \, \mathrm{Re} z_1, \mathrm{Re} z_2 \geq 0,
\end{equation}
we have
\begin{equation}
    u_\epsi (t) = e^{t (\epsi + i) (\cL + V)} u_0,
\end{equation}
i.e., $u_\epsi$ solves
	\begin{equation*}
	\begin{cases}
	\p_t u_\epsi = (\epsi + i) (\cL + V)u_\epsi ,\\
	u_\epsi (0) = u_0 .
	\end{cases}.
	\end{equation*}
	Here we used the fact that $V=V(x)$ is real-valued.
	Furthermore, from
	\begin{equation*}
		u_\epsi (1) = e^{\epsi (\cL + V)} u(1),
	\end{equation*}
	we can apply Lemma \ref{lem-220705-1134} to obtain that for some $\beta_\epsi \in (0, \beta)$ with $\beta_\epsi \rightarrow \beta$ as $\epsi \rightarrow 0$,
	\begin{equation*}
		e^{\beta_\epsi |x|^2} u_\epsi (1,x) \in L_x^2(\mathbb{R}^n).
	\end{equation*}
	Now we apply Proposition \ref{prop-220704-0508} with $a=\epsi, b=1, g=0, \beta=\beta_\epsi$ to $u_\epsi$ to obtain \eqref{eqn-220705-1232} - \eqref{eqn-220521-0546} with $\beta , u$ being replaced by $\beta_\epsi, u_\epsi$.
	The desired estimate follows by sending $\epsi \rightarrow 0$.
	
	This completes the proofs of Theorem \ref{thm-220515-0921}. 
\end{proof}

\begin{proof}[Proof of Corollary \ref{cor-220515-1117}]
The proof is based on a ``subordination type'' inequality in Lemma \ref{lem1.Appendix} in Appendix \ref{sec AppA}. 
We only prove the inequality \eqref{cor.high.order.decay.1} since \eqref{cor.high.order.decay.2} can be derived analogously by noting that $\beta_0 \geq 1$.

We start by proving \eqref{cor.high.order.decay.1} in the case $\alpha\in (1,2)$. 
Taking $q \in (1,2)$ as the conjugate exponent of $\alpha$, multiplying both members of \eqref{eqn-220711-0221} by $e^{-\frac{(2\beta)^q}{q\kappa^q}}(2\beta)^\frac{q-2}{2}$ with $\kappa > 0$, and integrating against $\beta$ in $(\beta_0,\infty)$, we get
\begin{align}
     & \quad \int_{\mathbb{R}^n}\left(\int_{\beta_0}^{+\infty} e^{2\beta|x|^2-\frac{(2\beta)^q}{\kappa^q q}}(2\beta)^\frac{q-2}{2} d\beta\right)|u(t,x)|^2 \, dx \label{pf.cor.log.conv.eq1}\\
    & \leq C\left(\int_{\mathbb{R}^n} \left(\int_{\beta_0}^{+\infty} e^{2\beta|x|^2-\frac{(2\beta)^q}{\kappa^q q}}(2\beta)^\frac{q-2}{2} d\beta\right)|u_0(x)|^2 \, dx\right)^{1-t} \left(\int_{\mathbb{R}^n} \left(\int_{\beta_0}^{+\infty} e^{2\beta|x|^2-\frac{(2\beta)^q}{\kappa^q q}}(2\beta)^\frac{q-2}{2} d\beta\right)|u_1(x)|^2 \, dx\right)^t .       
\end{align}
By Lemma \ref{lem1.Appendix} with $r= |x|^2$ and $\alpha=p$, for every $\kappa$ with 
\begin{equation}
    \frac{\kappa^\alpha}{\alpha} >\kappa_0:=\frac{1}{\alpha}\left( 4\beta_0 \left(\frac{2}{q-2} \right)^{1/q}\right)^\alpha,
\end{equation}
we have
\begin{align}
\int_{\beta_0}^{+\infty} e^{2\beta|x|^2-\frac{(2\beta)^q}{\kappa^q q}}(2\beta)^\frac{q-2}{2} d\beta\approx_{\kappa, \alpha} e^{\frac{\kappa^\alpha  |x|^{2\alpha}}{\alpha}},
\end{align}
which, replaced in \eqref{pf.cor.log.conv.eq1}, gives the result for $\alpha\in (1,2)$. Here  the notation $f\approx_{a,b} g$ means that there exists a positive constant $C$ depending on $a,b$  such that $1/Cg(x)\leq f(x)\leq Cg(x)$.

The proof of \eqref{cor.high.order.decay.1} in the case when $\alpha=2^m$, with $m$ being an integer $m\geq 1$, follows by using the strategy of ``completing the square''. We just show how to prove the case $m=1$, since iterating this result one can further get the conclusion for any $m\geq 1$.

Multiplying $e^{-\left(\frac{\beta}{\kappa}\right)^2}$ to both sides of \eqref{eqn-220711-0221}, with $\kappa \geq \beta_0$ (recall, $\beta_0$ is as before such that \eqref{eqn-220711-0222} holds for $\beta\geq \beta_0$), and completing the square, we obtain
	\begin{align*}
		& \quad \int_{\bR^n} e^{-(\beta/\kappa-\kappa|x|^2)^2}e^{\kappa^2|x|^4}|u(t,x)|^2\,dx \\
		& \leq 
		C\left(\int_{\bR^n} e^{-(\beta/\kappa-\kappa|x|^2)^2}e^{\kappa^2|x|^4}|u(0,x)|^2\,dx\right)^{1-t}
		\left(\int_{\bR^n} e^{-(\beta/\kappa-\kappa|x|^2)^2}e^{\kappa^2|x|^4}|u(1,x)|^2\,dx\right)^{t}.
	\end{align*}
	Next we integrate both sides in $\beta$, from $\beta_0$ to $+\infty$. By H\"older's inequality and Fubini's theorem,
	\begin{equation}\label{eqn-220611-0551}
		\begin{split}
		&\quad\int_{\bR^n}e^{\kappa^2|x|^4}|u(t,x)|^2\int_{\beta_0}^\infty e^{-(\beta/\kappa-\kappa|x|^2)^2}\,d\beta dx 
		\\&\leq 
		C
		\left(\int_{\bR^n}e^{\kappa^2|x|^4}|u(0,x)|^2\int_{\beta_0}^\infty e^{-(\beta/\kappa-\kappa|x|^2)^2}\,d\beta dx\right)^{1-t}
		\left(\int_{\bR^n}e^{\kappa^2|x|^4}|u(1,x)|^2\int_{\beta_0}^\infty e^{-(\beta/\kappa-\kappa|x|^2)^2}\,d\beta dx\right)^{t}.
	\end{split}
	\end{equation}
	Taking the change of variable $\gamma=\beta/\kappa - \kappa|x|^2$, since $\kappa\geq  \beta_0$
	\begin{equation}\label{eqn-220521-0558}
		\int_{\beta_0}^\infty e^{-(\beta/\kappa-\kappa|x|^2)^2}\,d\beta = \kappa \int_{\beta_0/\kappa - \kappa |x|^2}^\infty e^{-\gamma^2}\,d\gamma
		\,\,
		\begin{cases}
			\leq \kappa \int_{-\infty}^\infty e^{-\gamma^2} = \kappa\sqrt{\pi}
			\\
			\geq \kappa \int_1^\infty e^{-\gamma^2} > \kappa/10
		\end{cases}.
	\end{equation}
	Substituting the upper (lower) bound in \eqref{eqn-220521-0558} back to the right (left)-hand side of \eqref{eqn-220611-0551}, we reach the desired estimate.
	
	We are now left with the case $\alpha\in(2^m, 2^{m+1})$ with $m\geq 1$. The strategy to be used is the same as the one used for the case $\alpha\in (1,2)$. The only difference here is that Lemma \ref{lem1.Appendix} is applied with $r=|x|^{ 2^{m+1}}$ and $p=\alpha/2^m \in (1,2)$. Corollary \ref{cor-220515-1117} is proved.
\end{proof}

\section{Carleman inequalities and lower bounds of solutions in the general case}\label{sec Carlman}

In this section, we prove a Carleman estimate with a cubic exponential weight (Lemma \ref{lem-220816-1207}), and then show a lower bound for non-trivial solutions (Theorem \ref{thm-220815-1155}). The latter will be used in Section \ref{sec Main} to prove the main theorem (Theorem \ref{thm-220816-1210}).

\begin{lemma}\label{lem-220816-1207}
    Assume $A \in C_b^3(\bR^n)$, $\varphi = \varphi(t) \in C^\infty_c(\bR)$, and $r_0>0$. There exists an $\epsi_0=\epsi_0 (n,\lambda, \Lambda)$, such that if
    \begin{equation*}
        \sup_{x\in\bR^n} |x||\nabla A| < \epsi_0,
    \end{equation*}
    then for any function
    \begin{equation}
        f  \in C^\infty_c (\bR\times B_{r_0}^{\mathsf{c}})
    \end{equation}
    and $\beta ,R$ satisfying
    \begin{equation} \label{eqn-221005-0106}
        \beta \geq \beta_1 := \max\{ \lambda^{-1} \|\varphi ''\|_{L^\infty}^{1/2} r_0^{-1}R^3 , C_1( 1+ r_0^{-1}) R^2\}, \quad R\geq 1
    \end{equation}
    with $C_1 = C_1 (n,\lambda, \Lambda, \|A\|_{C^3})$, we have
    \begin{align}
         \beta R^{-2} \int_\bR \int_{\bR^n} |\nabla f|^2 \, dxdt & + \beta^3 R^{-6}\int_\bR \int_{\bR^n} |x|^2 |f|^2\, dxdt \\
        & \leq
        \lambda^{-2} \int_\bR \int_{\bR^n} \big| e^{\beta (|x/R|^2 + \varphi(t))} (i\p_t + \cL) e^{- \beta (|x/R|^2 + \varphi(t))} f \big|^2\, dxdt
    \end{align}
\end{lemma}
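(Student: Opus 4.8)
The plan is to prove the Carleman inequality by the standard conjugation-and-commutator method, specializing the general formulas of Section~\ref{sub-220817} to the concrete weight $\phi = \phi(t,x) = \beta(|x/R|^2 + \varphi(t))$. First I would write $e^{\phi}(i\p_t + \cL)e^{-\phi} = \cS + \cA$ with $\cS,\cA$ the symmetric and antisymmetric parts from \eqref{eqn-220815-0801-1}, and expand
\[
\norm{(\cS+\cA)f}_{L^2_{t,x}}^2 = \norm{\cS f}^2 + \norm{\cA f}^2 + \inner{[\cS,\cA]f,f}_{L^2_{t,x}} \geq \inner{[\cS,\cA]f,f}_{L^2_{t,x}}.
\]
So it suffices to bound $\inner{[\cS,\cA]f,f}$ from below by the right-hand side. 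I would then plug this specific $\phi$ into the expansion $[\cS,\cA] = T_2 + T_1 + T_{0,1} + T_{0,2}$ from Section~\ref{sub-220817}, using the simplified forms \eqref{eqn-220815-0801-2}. Here $\nabla^2\phi = 2\beta R^{-2} I_n$ is constant, $\nabla^3\phi = \nabla^4\phi = 0$, $\p_t\phi = \beta\varphi'(t)$ is $x$-independent so $\nabla\p_t\phi = 0$, and $\p_{tt}\phi = \beta\varphi''(t)$. These vanishings kill most of the error terms: the $T_2$ principal term becomes $-8\beta R^{-2}a_{kj}a_{mk}\p_{mj}^2$, whose integration by parts yields $+8\beta R^{-2}\int a_{kj}a_{mk}\p_m f\,\p_j f\,dx \geq 8\lambda^2\beta R^{-2}\int|\nabla f|^2$ by ellipticity (as in \eqref{eqn-220508-0349-11}); the $T_{0,1}$ term gives $16\beta^3 R^{-6}\int a_{ml}a_{kj}x_l x_j |f|^2 \geq 16\lambda^2\beta^3 R^{-6}\int|x|^2|f|^2$; and $T_{0,2}$ contributes the harmless $\p_{tt}\phi = \beta\varphi''$ term.

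The error terms split into two groups. The ones coming from $\nabla A$: these carry factors like $\beta R^{-2}|\nabla A|$ hitting $|\nabla f|^2$, or $\beta R^{-2}|\nabla\phi||\nabla A|\sim \beta^2 R^{-4}|x||\nabla A|$ hitting $|\nabla f||f|$, etc.\ (the analogues of $II_1,II_2,I$ in \eqref{eqn-220508-0350-1}). I would control these by the smallness $\sup_{\bR^n}|x||\nabla A|<\epsi_0$ together with Young's inequality, exactly as in the proof of Lemma~\ref{lem-220711}: choosing $\epsi_0 = \epsi_0(n,\lambda,\Lambda)$ small absorbs them into a fraction of the two good terms. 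The ones coming from $\|A\|_{C^3}$ but \emph{not} requiring smallness --- terms with $|\nabla^2 A|$, $|\nabla^3 A|$, $|\nabla A|^2$ --- are bounded by $C_1 = C_1(n,\lambda,\Lambda,\|A\|_{C^3})$ and produce contributions of order $\beta^2 R^{-4}$ and $\beta R^{-2}$ against $|x|^2|f|^2$ (after Young) and, on the bounded region $B_{r_0}$ where $f$ could be small, one would \emph{not} get a $|x|^2$ weight for free; but since $\supp f \subset B_{r_0}^{\mathsf c}$ we have $|x|\geq r_0$ there, so a bare $\int|f|^2$ is dominated by $r_0^{-2}\int|x|^2|f|^2$. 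This is precisely why the threshold \eqref{eqn-221005-0106} contains the factor $(1+r_0^{-1})R^2$: requiring $\beta \geq C_1(1+r_0^{-1})R^2$ makes $\beta^3 R^{-6} \gg C_1\beta^2 R^{-4}(1+r_0^{-1})$ and $\gg C_1\beta R^{-2}r_0^{-2}$ so these $C_1$-errors are swallowed by the $\beta^3 R^{-6}\int|x|^2|f|^2$ term. Finally the time term $\beta\varphi''(t)|f|$ is controlled by $\|\varphi''\|_{L^\infty}\beta\int|f|^2 \leq \|\varphi''\|_{L^\infty}\beta r_0^{-2}\int|x|^2|f|^2$, and requiring $\beta \geq \lambda^{-1}\|\varphi''\|_{L^\infty}^{1/2}r_0^{-1}R^3$ (i.e.\ $\beta^2 \geq \lambda^{-2}\|\varphi''\|_{L^\infty}r_0^{-2}R^6$, so $\lambda^2\beta^3 R^{-6} \geq \|\varphi''\|_{L^\infty}\beta r_0^{-2}$) absorbs it. Collecting, $\inner{[\cS,\cA]f,f} \geq 2\lambda^2\beta R^{-2}\int|\nabla f|^2 + 2\lambda^2\beta^3 R^{-6}\int|x|^2|f|^2$, which after dividing by $\lambda^2$ and reabsorbing constants gives the claimed bound with constant $\lambda^{-2}$ on the right.

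The main obstacle I anticipate is \emph{bookkeeping the $R$-powers consistently} across all error terms so that the three conditions in \eqref{eqn-221005-0106} --- and only those --- suffice. Each derivative of $\phi$ in $x$ costs $R^{-1}$ and the good terms sit at $\beta R^{-2}$ and $\beta^3 R^{-6}$; one must check that no error term needs a worse-than-$R^2$ lower bound on $\beta$ or an $r_0$-dependence not already present in $\beta_1$. A secondary subtlety: since $A$ is not assumed to have a small $C^3$-norm, only $|x||\nabla A|$ small, the terms like $\beta R^{-2}|\nabla A|\int|\nabla f|^2$ must be handled by smallness of $|x||\nabla A|$ combined with $|x|\geq r_0$ (writing $|\nabla A| \leq r_0^{-1}|x||\nabla A| \leq r_0^{-1}\epsi_0$ on $\supp f$) rather than by largeness of $\beta$; I would be careful that this is legitimate and that the resulting $r_0^{-1}$ is consistent with $\beta_1$. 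Everything else is routine integration by parts and Young's inequality, mirroring \eqref{eqn-220508-0350-1}--\eqref{eqn-220515-1028-1}.
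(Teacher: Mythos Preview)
Your proposal is correct and follows essentially the same approach as the paper: conjugate, expand $\|(\cS+\cA)f\|^2\geq\langle[\cS,\cA]f,f\rangle$, specialize the commutator formulas of Section~\ref{sub-220817} to $\phi=\beta(|x/R|^2+\varphi(t))$, absorb the $|x||\nabla A|$ errors by smallness of $\epsi_0$, and absorb the remaining $\|A\|_{C^3}$ and $\varphi''$ errors using $|x|\geq r_0$ on $\supp f$ together with the threshold \eqref{eqn-221005-0106}. The only minor remark is that your ``secondary subtlety'' about a bare $\beta R^{-2}|\nabla A|\int|\nabla f|^2$ term does not in fact arise---after integration by parts the $|\nabla f|^2$ errors already carry a factor $|x||\nabla A|$, while the standalone $|\nabla A|$ appears only against $|\nabla f||f|$ and goes to the $|f|^2$ side after Young---so no $r_0^{-1}$ trick is needed at that particular step.
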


\begin{proof}[Proof of Lemma \ref{lem-220816-1207}]
Let $\phi = \phi (t,x)$.  For $\cL = \partial_{k} (a_{kj} \partial_{j} )$ with $a_{kj} = a_{kj} (x)$, we decompose 
\begin{align}
e^{\phi} (i \partial_{t} + \cL) e^{-\phi} =:  \cS + \cA,
\end{align}
where $\cS$ and $\cA$ are symmetric and anti-symmetric operators with respect to $\inner{ \cdot, \cdot}_{L_{t,x}^2}$. 

Now we plug in $\phi = \beta (\abs{x/R}^2 + \varphi(t))$. Using the formula in Section \ref{sub-220817}, noting
\begin{equation*}
\partial_{j} \phi  = \frac{2 \beta}{R^2} x_j , \,\, \partial_{kj}^2 \phi  = \frac{2\beta}{R^2} \delta_{kj}, \,\, \partial_{t} \phi  = \beta \varphi' , \,\, \partial_{tt} \phi  = \beta \varphi'',
\end{equation*}
and
\begin{equation*}
    \nabla^3 \phi = \nabla \partial_{t} \phi = 0,
\end{equation*}
we have
\begin{align}
[\cS, \cA] & = - \frac{8\beta}{R^2} a_{kj} a_{mk} \partial_{mj}^2 - \frac{8 \beta}{R^2} a_{kj} (\partial_{k} a_{ml}) x_l \partial_{mj}^2 + \frac{4 \beta}{R^2} a_{ml} (\partial_{m} a_{kj}) x_l \partial_{kj}^2 \\
& \quad + O(1) \frac{\beta}{R^2} (\abs{\nabla A} + \abs{x} \abs{\nabla^2 A} + \abs{x} \abs{\nabla A}^2) \nabla \\
& \quad + 32 \frac{\beta^3}{R^6} a_{kj} a_{kl} x_l x_j + 16 \frac{\beta^3}{R^6} a_{ml} (\partial_{m} a_{kj}) x_k x_j x_l \\
& \quad + \beta \varphi''  + O(1) \frac{\beta}{R^2} (\abs{\nabla^2 A} + \abs{x} \abs{\nabla A} \abs{\nabla^2 A} + \abs{x} \abs{\nabla^3 A} + \abs{\nabla A}^2).
\end{align}
Hence,
\begin{align}
 \| e^{\phi} & ( i \p_t + \cL ) e^{-\phi} f \|_{L^2_{t,x}}^2
 =
\| (\cS + \cA) f \|_{L^2_{t,x}}^2
\\& \geq 
\inner{[\cS , \cA]f, f}_{L_{t,x}^2} 
\\ & = \frac{8\beta}{R^2} \iint a_{kj} a_{mk} \partial_{m} f \partial_{j} \bar{f}\, dxdt + O (1) \frac{\beta}{R^2} \iint \abs{x} \abs{\nabla A} \abs{\nabla f}^2\, dxdt \\
& \quad + O (1) \frac{\beta}{R^2} \iint (\abs{\nabla A} + \abs{x} \abs{\nabla A}^2 + \abs{x} \abs{\nabla^2 A}) \abs{\nabla f} \abs{f} \, dxdt\\
& \quad + 32 \frac{\beta^3}{R^6} \iint a_{kj} a_{kl} x_l x_k \abs{f}^2 \, dxdt + O (1) \frac{\beta^3}{R^6} \iint \abs{x}^3 \abs{\nabla A} \abs{f}^2\, dxdt + \iint \beta \varphi'' \abs{f}^2\, dxdt \\
& \quad + O (1) \frac{\beta}{R^2} \iint (\abs{\nabla^2 A}  + \abs{x} \abs{\nabla A} \abs{\nabla^2 A} + \abs{x} \abs{\nabla^3 A} + \abs{\nabla A}^2 + \abs{x}^2 \abs{\nabla^2 A}^2 + \abs{x}^2 \abs{\nabla A}^4) \abs{f}^2\, dxdt\\
& \geq
7\beta R^{-2} \lambda^2 \iint |\nabla f|^2\, dxdt + O (1) \beta R^{-2} \iint \abs{x} \abs{\nabla A} \abs{\nabla f}^2\, dxdt\\
& \quad + 32 \beta^3 R^{-6} \lambda^2 \iint |x|^2\abs{f}^2 \, dxdt + O (1) \beta^3 R^{-6} \iint \abs{x}^3 \abs{\nabla A} \abs{f}^2 \,dx dt+ \beta \iint \varphi'' \abs{f}^2 \, dxdt\\
& \quad + O (1) \beta R^{-2} \iint (\abs{\nabla^2 A}  + \abs{x} \abs{\nabla^3 A} + \abs{\nabla A}^2 + \abs{x}^2 \abs{\nabla^2 A}^2 + \abs{x}^2 \abs{\nabla A}^4) \abs{f}^2 \, dxdt .
\end{align}
Here we used
\begin{equation*}
    \iint \abs{x} \abs{\nabla A} \abs{\nabla^2 A} \abs{f}^2\, dxdt \leq \frac{1}{2} \iint \abs{x}^2 \abs{\nabla^2 A}^2 \abs{f}^2\, dxdt + \frac{1}{2} \iint \abs{\nabla A}^2 \abs{f}^2\, dxdt,
\end{equation*}
\begin{equation*}
    \iint (\abs{\nabla A} + \abs{x} \abs{\nabla A}^2 + \abs{x} \abs{\nabla^2 A}) \abs{\nabla f} \abs{f}\, dxdt \leq \delta \iint\abs{\nabla f}^2 + \frac{C}{\delta} \iint(\abs{\nabla A}^2 + \abs{x}^2 \abs{\nabla A}^4 + \abs{x}^2 \abs{\nabla^2 A}^2) \abs{f}^2\,dxdt,
\end{equation*}
and the following inequality coming from the ellipticity
\begin{equation*}
    a_{kj}a_{mk}\xi_m \xi_j \geq \lambda^2 |\xi|^2 \quad \forall \xi\in \bR^n,
\end{equation*}
then chose $\delta$ to be small enough.
Furthermore, choosing $\epsi_0$ in \eqref{eqn-220512-1132} to be small enough, we have
\begin{align}
\inner{[\cS , \cA]f, f}_{L_{t,x}^2} & \geq 4 \beta R^{-2} \lambda^2 \iint \abs{\nabla f}^2\,dx dt + 16 \beta^3 R^{-6} \lambda^2 \iint \abs{x}^2 \abs{f}^2\,dx dt \\
& \quad +\underbrace{ \beta \iint \varphi'' \abs{f}^2 \,dx dt + O (1) \beta R^{-2} \iint (\abs{\nabla^2 A} + \abs{x} \abs{\nabla^3 A} + \abs{\nabla A}^2 + \abs{x}^2 \abs{\nabla^2 A}^2) \abs{f}^2\,dx dt }_{I} .
\end{align}
Hence, if on $\supp (f)$
\begin{align}
8 \beta^3 R^{-6} \lambda^2 \abs{x}^2 \geq \beta \norm{\varphi''}_{L^{\infty}} + O (1) \beta R^{-2} (\abs{\nabla^2 A} + \abs{x} \abs{\nabla^3 A} + \abs{\nabla A}^2 + \abs{x}^2 \abs{\nabla^2 A}^2),
\end{align}
the term $I$ can be absorbed. Recall $\supp (f) \subset \{\abs{x} \geq r_0\}$, it suffices to assume
\begin{align}
\beta^2 R^{-6} \lambda^2 r_0^2 \geq \norm{\varphi''}_{L_x^{\infty}}
\end{align}
and
\begin{align}
\beta^2 R^{-4} \geq C \parenthese{\frac{\norm{\nabla^2 A}_{L_x^{\infty}}}{r_0^2} + \frac{\norm{\nabla^3 A}_{L_x^{\infty}}}{r_0}  + \frac{\norm{\nabla A}_{L_x^{\infty}}^2}{r_0^2}  + \norm{\nabla^2 A}_{L_x^{\infty}}^2 }
\end{align}
where $C =C(n , \lambda, \Lambda)$. Lemma \ref{lem-220816-1207} is proved.
\end{proof}

Next, we prove lower bounds for non-trivial solutions.
\begin{theorem}\label{thm-220815-1155}
  Let $u\in L^\infty ([0,1], L^2(\mathbb{R}^n)) \cap L^2_{loc}((0,1),H^1(\mathbb{R}^n))$ be a solution of
    \begin{equation*}
        \p_t  = i( \cL + V) u,
    \end{equation*}
    where $a_{jk}$ satisfies the assumptions of Lemma \ref{lem-220816-1207}, $V \in  L^\infty(\bR\times\bR^n,\bC)$. Denote by $M_1:=\|V\|_{L^\infty}$.
    
    Furthermore, let $E_1, E_2, R_0, \epsi$ be the numbers such that
    \begin{equation}\label{thm-220815-1155-eq0}
        \int_{ 1/8 }^{ 7/8 }\int_{\mathbb{R}^n}(|u|^2+|\nabla u|^2)(t,x) \, dt dx\leq E_1^2<\infty
    \end{equation}
    and
    \begin{equation} \label{thm-220815-1155-eq2}
        \int_{ 1/4 }^{ 3/4 }\int_{B_{R_0}\setminus B_{2\varepsilon}}|u|^2(t,x)\, dt dx\geq E_2^2,
      \end{equation}
    then there exist positive constants $R_1=R_1(n, \lambda , \Lambda,  \norm{A}_{C^3} , \varepsilon , M_1,E_1 , E_2 , R_0)$, $C_0=C_0(\lambda , \varepsilon)$,  and $C =C (n, \lambda , \Lambda, \norm{A}_{C^3} ,\varepsilon , M_1,E_1 , E_2 , R_0)$, such that, for any $R > R_1$,
    \begin{equation}\label{thm-220815-1155-eq1}
        \delta(R) := \int_{1/8}^{7/8} \int_{B_R\setminus B_{R-1}} (|u|^2 + |\nabla u|^2)(t,x) \,dxdt \geq C \,e^{-C_0R^3}.
    \end{equation}
\end{theorem}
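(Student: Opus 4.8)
The plan is to run a standard Carleman-to-lower-bound localization argument based on the cubic-weight Carleman inequality in Lemma \ref{lem-220816-1207}. First I would fix cutoff functions: a spatial cutoff $\chi = \chi(x)$ which is $0$ on $B_{\varepsilon}$ and $1$ on $B_{2\varepsilon}^{\mathsf c}$, and a temporal cutoff $\varphi = \varphi(t) \in C^\infty_c((0,1))$ with $\varphi = 1$ on $[1/8, 7/8]$; the quadratic-in-$t$ part of the weight $|x/R|^2 + \varphi(t)$ in Lemma \ref{lem-220816-1207} is designed so that $\varphi''$ can be absorbed for $\beta$ large. Then set $f := \chi(x)\varphi_1(t) u$ for a suitable smooth temporal cutoff $\varphi_1$ supported in $(0,1)$ and equal to $1$ on $[1/8,7/8]$; note $f \in C^\infty_c(\bR \times B_\varepsilon^{\mathsf c})$ after a density/regularity argument using $u \in L^2_{loc}((0,1), H^1)$. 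The point is that $(i\p_t + \cL)f = i(\p_t\varphi_1)\chi u + \varphi_1[\cL,\chi]u - \varphi_1\chi V u$, so $(i\p_t + \cL)f + \varphi_1\chi Vu$ is supported where either $\chi$ is non-constant (i.e.\ in $B_{2\varepsilon}\setminus B_\varepsilon$) or $\varphi_1$ is non-constant (i.e.\ $t$ near $0$ or $1$, where we use the global bound $E_1$ — actually we should take $\varphi_1$ supported slightly inside so that $\p_t\varphi_1$ is supported in $[0,1/8]\cup[7/8,1]$ and use $E_1$ there; a cleaner choice uses the energy estimate on $[1/8,7/8]$, so let me instead take $\varphi_1 = 1$ on $[1/4,3/4]$ and supported in $(1/8,7/8)$ so the commutator-in-$t$ error lives where \eqref{thm-220815-1155-eq0} controls $u$).

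Next I would apply Lemma \ref{lem-220816-1207} to $f$ with $r_0 = \varepsilon$ and $\beta = \beta_1 = \beta_1(R)$ chosen at its minimal admissible value $\asymp R^3$ (so $e^{\beta\cdot} \asymp e^{CR^3}$ on the relevant regions). On the left-hand side, restrict the integral to the region where $\chi\varphi_1 = 1$ and $x \in B_{R_0}$, $t \in [1/4,3/4]$; there $f = u$, the weight $e^{2\beta(|x/R|^2 + \varphi)}$ is bounded below by a positive constant (since $|x/R| \le R_0/R$ is bounded and $\varphi$ is bounded below), and by \eqref{thm-220815-1155-eq2} this piece is $\gtrsim E_2^2$. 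On the right-hand side, split the source term: the contribution of the potential, $\|e^{\beta(\cdot)}\varphi_1\chi Vu\|_{L^2}^2 \le M_1^2 \|e^{\beta(\cdot)}f\|_{L^2}^2$, is absorbed into the left-hand side for $\beta \ge \beta_1$ large (using the $\beta^3 R^{-6}|x|^2|f|^2$ term, since $|x| \ge \varepsilon$ on $\supp f$, which beats $M_1^2|f|^2$); the commutator terms $[\cL,\chi]u$ and $(\p_t\varphi_1)u$ are supported in $(B_{2\varepsilon}\setminus B_\varepsilon) \cup ([1/8,7/8]\times\bR^n)$ — wait, the spatial commutator lives in a \emph{fixed} bounded region $B_{2\varepsilon}$ where the weight is $\le e^{2\beta(4\varepsilon^2/R^2 + \|\varphi\|_\infty)} \le e^{CR^3}$ for $\beta \asymp R^3$, and there $\|[\cL,\chi]u\|_{L^2}^2 + \|(\p_t\varphi_1)u\|_{L^2}^2 \lesssim E_1^2$ by \eqref{thm-220815-1155-eq0} plus $H^1$ regularity. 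The temporal commutator is where I must be careful: if $\p_t\varphi_1$ is supported near the endpoints of $(1/8,7/8)$, the weight there is still $\le e^{CR^3}$ since $\varphi$ and $|x/R|$ contribute boundedly only when $x$ is bounded — but $u$ is not compactly supported in $x$! So I should instead multiply by $\chi$ first: the temporal-commutator error is $(\p_t\varphi_1)\chi u$, whose weighted $L^2$ norm involves $e^{2\beta|x/R|^2}$ over all $x$, which is \emph{not} bounded. The fix is to bound $\delta(R)$ differently — actually $\delta(R)$ is exactly the weighted norm of $f$ over the shell $B_R\setminus B_{R-1}$, and on the rest of $\{|x|\ge 2\varepsilon\}$ we need no cutoff at all if we already know $u$ has fast spatial decay; but here we only assume $u \in L^\infty L^2$.

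Hence the main obstacle, and the reason $\delta(R)$ appears on the left of \eqref{thm-220815-1155-eq1}, is the outer spatial cutoff: one must actually use a cutoff $\chi_R$ that is $1$ on $B_{R-1} \setminus B_{2\varepsilon}$ and $0$ outside $B_R$ (and outside $B_\varepsilon$), so that $f = \chi_R \varphi_1 u \in C^\infty_c$ genuinely. Then $(i\p_t+\cL)f$ has an extra commutator $\varphi_1[\cL,\chi_R]u$ supported precisely in the shell $B_R \setminus B_{R-1}$, where the weight is $\asymp e^{2\beta|x/R|^2} \asymp e^{2\beta} \asymp e^{CR^3}$, and where $\|[\cL,\chi_R]u\|^2 \lesssim \delta(R)$ by definition \eqref{thm-220815-1155-eq1}. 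Collecting: the Carleman inequality gives, schematically, $c\,E_2^2 \le \lambda^{-2}\big(M_1^2\|e^{\beta(\cdot)}f\|^2 + C E_1^2 e^{CR^3} + C e^{CR^3}\delta(R)\big)$; absorbing the potential term and rearranging yields $\delta(R) \ge c\, e^{-CR^3}(E_2^2 - C'E_1^2 e^{-cR^3}\cdot e^{CR^3}\dots)$ — I must double-check the competition of exponents, but since the $E_1$ term carries the same $e^{CR^3}$ weight as the $\delta(R)$ term while the $E_2$ lower bound is weight-free, dividing through by $e^{CR^3}$ and choosing $R_1$ large enough that lower-order terms are controlled gives $\delta(R) \ge C e^{-C_0 R^3}$ with $C_0 = C_0(\lambda,\varepsilon)$ coming from $2\beta \asymp 2\lambda^{-1}\|\varphi''\|^{1/2}\varepsilon^{-1}R^3$ in \eqref{eqn-221005-0106}. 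The delicate points to verify in the full write-up are: (i) justifying the application of the Carleman inequality to $f = \chi_R\varphi_1 u$ which is only $H^1$, via mollification and the fact that the inequality is closed under $H^1_{loc}$ limits on compact sets; (ii) the exact bookkeeping showing the $E_1$ and $\delta(R)$ error terms both come with weight $\lesssim e^{C R^3}$ while the left side gain is $\gtrsim E_2^2$ with weight $\gtrsim 1$; and (iii) checking that with $\beta$ at its minimal value $\asymp R^3$, the $\beta^3 R^{-6}|x|^2$ term indeed dominates $M_1^2$ on $\{|x|\ge 2\varepsilon\}$ so the potential is absorbed — this needs $R_1$ depending on $M_1$ and $\varepsilon$, consistent with the statement.
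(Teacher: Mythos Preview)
Your overall architecture is right --- localize, apply the Carleman estimate of Lemma \ref{lem-220816-1207}, and compare the $E_2$ lower bound against the commutator errors --- but the bookkeeping of exponential weights does not close with your choice of cutoffs. You take the Carleman weight $\varphi$ with $\varphi=1$ on $[1/8,7/8]$ and a \emph{separate} temporal cutoff $\varphi_1$ supported in $(1/8,7/8)$ with $\varphi_1=1$ on $[1/4,3/4]$. On the lower-bound region $[1/4,3/4]\times(B_{R_0}\setminus B_{2\varepsilon})$ the weight is $e^{2\beta(|x/R|^2+\varphi)}\approx e^{2\beta}$ (not ``weight-free''), while the temporal-commutator error $(\partial_t\varphi_1)\chi_R u$ is supported in $\big((1/8,1/4)\cup(3/4,7/8)\big)\times B_R$, where $\varphi=1$ and $|x/R|^2\leq 1$, so its weighted contribution is $\lesssim e^{4\beta}E_1^2$. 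Since $e^{4\beta}\gg e^{2\beta}$, this error swamps the $E_2^2$ term for every large $\beta$, and no choice of $R_1$ rescues the inequality. Your claim that ``the $E_2$ lower bound is weight-free'' is exactly where the argument breaks.

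The paper avoids this by not using an independent temporal cutoff at all: it takes $\varphi=3$ on $[1/4,3/4]$ and replaces your $\varphi_1$ by $\theta(\psi)$ with $\psi=|x/R|^2+\varphi(t)$ and $\theta$ a cutoff of $[1,\infty)$ onto $[2,\infty)$. Then every time-derivative error carries the factor $\theta'(\psi)$, which is supported in $\{1\leq\psi\leq 2\}$, so automatically the weight there satisfies $e^{2\beta\psi}\leq e^{4\beta}$; meanwhile on $[1/4,3/4]\times B_{R_0}$ one has $\psi\geq 3$, giving the lower bound weight $\geq e^{6\beta}$. This built-in gap $e^{6\beta}/e^{4\beta}=e^{2\beta}$ is what allows the $E_1$ errors (and the inner-cutoff error $I_3$) to be absorbed, leaving only the outer-shell term $e^{8\beta}\delta(R)$ on the right. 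If you want to keep a separate $\varphi_1$, you must at minimum arrange $\varphi\equiv 0$ on $\operatorname{supp}(\partial_t\varphi_1)$ and $\varphi\geq 3$ on a subinterval of $\{\varphi_1=1\}$; the $\theta(\psi)$ device does this automatically and more cleanly.
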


\begin{proof}[Proof of Theorem \ref{thm-220815-1155}]
We start by defining a cutoff function in time 
\begin{align}
    \varphi = \varphi (t) \in C_c^\infty( (1/8, 7/8)), \quad \text{with} \,\, \varphi = 3 \,\,\text{on}\,\, [1/4,3/4].
\end{align}
Furthermore, define $\theta, \theta^{(R)}, \theta_{(\varepsilon)} \in C^{\infty}(\mathbb{R}^n)$:
\begin{align}
\theta(r)=\left\{\begin{array}{ll}
    0 &  r\leq 1  \\
    1 & r\geq 2 
\end{array}\right., 
\quad  
\theta^{(R)} (x)  = 
\begin{cases}
1, & x \in B_{R-1}, \\
0, & x \in B_R^{\mathsf{c}} . 
\end{cases}
\quad 
\theta_{(\varepsilon)} (x) = 
\begin{cases}
0, & x \in B_{\varepsilon}, \\
1, & x \in B_{2 \varepsilon}^{\mathsf{c}}  .
\end{cases}
\end{align}

As usual, we require
\begin{equation*}
    |\varphi'| + |\theta'| + |\nabla \theta^{(R)}| + |\nabla^2 \theta^{(R)}| \leq C,\quad |\theta_{(\epsi)}'|\leq \frac{C}{\epsi}.
\end{equation*}
Write also
\begin{align}
\psi(t,x) = \abs{x/R}^2 + \varphi (t) \left( = \frac{\phi (t,x)}{\beta} \right),
\end{align}
and let
\begin{align}
g = \theta (\psi (t,x)) \theta^{(R)}(x) \theta_{(\varepsilon)} (x) u(t,x) = \theta \parenthese{\abs{x/R}^2 + \varphi (t)} \theta^{(R)} \theta_{(\varepsilon)} u.
\end{align}
We get the following properties of $g$ and of its support.
\begin{itemize}
\item 
From the supports of  $\theta^{(R)}$ and $\theta_{(\varepsilon)}$, 
\begin{equation} \label{eqn-221005-1217-1}
    \supp (g) \subset [0,1] \times (B_R \setminus B_{\varepsilon}).
\end{equation}
\item
The support of the function $\theta (\psi)$ implies that
\begin{align}
\supp (g) \subset \{ \psi (t,x) \geq 1\}.
\end{align}
In particular, when $t \in [0, 1/8] \cup [7/8 , 1]$, $\varphi = 0$, which gives  $\psi = \abs{x/R}^2 \leq 1$ in $( [0, 1/8] \cup [7/8 , 1] ) \times B_R$. Hence, combined with \eqref{eqn-221005-1217-1},
\begin{equation*}
    g =0 \quad \text{for} \,\, t \in [0, 1/8] \cup [7/8 ,1].
\end{equation*} 
In fact, $\theta (\psi)$ is a cutoff in time such that 
\begin{equation*}
    \supp (\theta' (\psi)) \subset \{ 1 \leq \psi \leq 2\},
\end{equation*}
which gives a nice control of time cutoff terms in Carleman estimates. Also, we extend $g$ by zero for $t \notin [0,1]$. This way the extension, that we still denote by $g$, has compact support in $\bR \times \bR^n$. 

\item
When $t \in [1/4, 3/4]$, $\varphi = 3$ and
\begin{align}
\psi = \abs{x/R}^2 + \varphi \geq \abs{x/R}^2  + 3 \geq 3,
\end{align}
which implies $\theta (\psi) =1$ in $[1/4, 3/4]$. This, in particular, gives
\begin{equation}\label{eqn-221005-1223}
g = u \quad \text{ on } \quad [1/4, 3/4] \times (B_{R-1} \setminus B_{2 \varepsilon}).
\end{equation}
\end{itemize}
\medskip

With these properties at hand, we now apply the quadratic Carleman estimate in Lemma \ref{lem-220816-1207} to $f = e^{\phi} g$. 
Below we shall use the notation $LHSC, RHSC$ for the left and the right-hand side of the Carleman estimate in Lemma \ref{lem-220816-1207} applied on $f$, respectively.
Then, for $\beta_1$ defined in \eqref{eqn-221005-0106} with $r_0 = \epsi$, if
\begin{equation}\label{eqn-221005-1224}
    \beta \geq \beta_1,\quad R -1 \geq R_0,
\end{equation}
we have
\begin{align}
LHSC 
& \geq 
\beta^3 R^{-6} \int_{\bR} \int_{\bR^n} \abs{x}^2 \abs{f}^2 \, dxdt 
\left(= 
\beta^3 R^{-6} \int_{\bR} \int_{B_{\epsi}^{\mathsf{c}}} |x|^2 e^{2\phi}|g|^2 \,dxdt \right) \\
& = \frac{1}{2} \beta^3 R^{-6} \int_{\bR} \int_{\bR^n} \abs{x}^2 \abs{f}^2 \, dxdt  + \frac{1}{2} \beta^3 R^{-6} \int_{\bR} \int_{\bR^n} \abs{x}^2 \abs{f}^2 \, dxdt \\
& \geq 
\frac{1}{2} \beta^3 R^{-6} \varepsilon^2 \int_{\bR} \int_{\bR^n} e^{2 \phi} \abs{g}^2 \, dxdt + 2 \beta^3 R^{-6} \varepsilon^2 \int_{ 3/4 }^{ 1/4 } \int_{B_{R_0} \setminus B_{2\varepsilon}} e^{2\beta (4 \frac{\varepsilon^2}{R^2} +3)} \abs{u}^2 \, dxdt \\
& \geq
\frac{1}{2} \beta^3 R^{-6} \varepsilon^2 \int_{\bR} \int_{\bR^n} e^{2 \phi} \abs{g}^2 \, dxdt + 2 \beta^3 R^{-6} \varepsilon^2 e^{2 \beta (4 \frac{\varepsilon^2}{R^2} +3)} E_2^2 \label{eqn-1310-1609}.
\end{align}
Here we used \eqref{eqn-221005-1217-1}, \eqref{eqn-221005-1223}, and \eqref{thm-220815-1155-eq2}.

To estimate the RHSC we first observe that
\begin{align}
(i \partial_t + \cL) g & = (i\partial_t + \cL) (\theta \theta^{(R)} \theta_{(\varepsilon)} u) \\
& = - V g + [i \partial_t + \cL, \theta \theta^{(R)} \theta_{(\varepsilon)}]u,
\end{align}
which splits the RHSC into two terms. For the $Vg$ term, we have
\begin{align}
\int_0^1 \int_{\bR^n} e^{2 \phi} \abs{Vg}^2  \, dxdt\leq \norm{V}_{L^{\infty}}^2 \int_0^1 \int_{\bR^n} e^{2 \phi} \abs{g}^2  \, dxdt.
\end{align}

For the commutator term, we split it into four pieces. For simplicity, in the following we write $\theta$ for $\theta(\psi)$.
\begin{align}
[i \partial_t  + \cL , \theta \theta^{(R)} \theta_{(\varepsilon)}]u & = \underbrace{i \theta' \varphi' \theta^{(R)} \theta_{(\varepsilon)} u + O(1) \frac{\abs{x}}{R^2} \abs{\theta'} (\abs{\nabla A} \theta^{(R)} \theta_{(\varepsilon)} \abs{u} + \abs{\nabla (\theta^{(R)} \theta_{(\varepsilon)} u)}) }_{I_1} \\
& \quad + O(1)  \underbrace{(\frac{1}{R^2} \abs{\theta'} + \frac{\abs{x}^2}{R^4} \abs{\theta''}) \theta^{(R)} \theta_{(\varepsilon)} \abs{u} }_{I_2} \\
& \quad + O(1) \underbrace{ \abs{\nabla \theta_{(\varepsilon)}} \theta (\abs{\nabla A} \theta^{(R)} \abs{u} + \abs{\nabla (\theta^{(R)} u)}) + O(1) \abs{\nabla^2 \theta_{(\varepsilon)}} \theta \theta^{(R)} \abs{u} }_{I_3}\\
& \quad + \underbrace{  \theta \theta_{(\epsi)} [\cL, \theta^{(R)}] u}_{I_4}.\label{eqn-1910-1546}
\end{align}
Note that $I_1$ and $I_2$ are terms with at least one derivative hitting on $\theta (\psi)$, that in $I_3$ no derivative hits on $\theta$ and at least one derivative  hits on $\theta_{(\varepsilon)}$, and that term $I_4$ has derivatives only hitting on $\theta^{(R)}$ but not on $\theta$ nor $\theta_{(\varepsilon)}$.

We first bound $I_1 $ and $I_2$. Since
\begin{align}
\supp (\theta' (\psi)) \subset \{ 1 \leq \psi \leq 2 \} , \implies \phi = \beta \psi \in [\beta, 2 \beta]
\end{align}
and
\begin{align}
\supp (g) \subset \{ t \in [ 1/8 , 7/8 ], \quad \abs{x/R} \leq 1 \} .
\end{align}
Recall \eqref{thm-220815-1155-eq0}, we get
\begin{align}
\int_0^1 \int_{\bR^n} e^{2\phi} \abs{I_1 + I_2}^2 \, dxdt \leq C \int_{ 1/8 }^{ 7/8 } \int_{\bR^n} e^{4 \beta} (\abs{u}^2 + \abs{\nabla u}^2) \, dxdt  \leq C_1 e^{4\beta} E_1^2,
\end{align}
where $C_1 = C_1 (n , \lambda , \Lambda, \norm{\nabla A}_{L_x^{\infty}} , \varepsilon)$.

Next we bound $I_3$. From $\supp (\nabla \theta_{(\varepsilon)}) \subset B_{2\varepsilon} \setminus B_{\varepsilon}$ and $\varphi \leq 3$,
\begin{align}
\int_0^1 \int_{\bR^n} e^{2 \phi} \abs{I_3}^2 \, dxdt  \leq C_2 e^{2 \beta (4 (\frac{\varepsilon}{R})^2 + 3)} \int_{ 1/8 }^{ 7/8 } \int_{B_{2\varepsilon}\setminus B_\varepsilon}(\abs{u}^2 + \abs{\nabla u}^2) \, dxdt  \leq C_2 e^{2 \beta \abs{4 (\frac{\varepsilon}{R})^2 +3}} E_1^2,
\end{align}
where  $C_2 = C_2 (n, \lambda , \Lambda, \norm{\nabla A}_{L_x^{\infty}} , \varepsilon)$.

Finally, for the last term $I_4$, we use that
\begin{align}
e^{\beta (\abs{x/R}^2 + \varphi) } \leq e^{\beta(1 + 3)} \leq e^{4 \beta},\quad \text{in}\,\, [0,1] \times B_R \supset \supp(g),     
\end{align}
which gives 
\begin{align}
\int_0^1 \int_{\bR^n} e^{2\phi} \abs{I_4}^2 \, dxdt  \leq C_3 e^{8 \beta} \int_{ 1/8 }^{ 7/8 } \int_{B_R \setminus B_{R-1}} (\abs{u}^2 + \abs{\nabla u}^2) \, dxdt  = C_3 e^{8 \beta} \delta(R),
\end{align}
with $C_3 = C_3 (n, \lambda , \Lambda, \norm{\nabla A}_{L_x^{\infty}} )$.

Combining all the calculations above, we have
\begin{align}
& \quad \frac{1}{2} \beta^3 R^{-6} \varepsilon^2 \int_0^1 \int_{\bR^n} e^{2\phi} \abs{g}^2 \, dxdt + 2 \beta^3 R^{-6} \varepsilon^2 e^{2 \beta (4 (\frac{\varepsilon}{R})^2 + 3)} E_2^2 \\
& \leq \lambda^{-2} \left( \norm{V}_{L_x^{\infty}}^2 \int_0^1 \int_{\bR^n} e^{2\phi} \abs{g}^2  \, dxdt+  C_1 e^{4 \beta} E_1^2 +  C_2 e^{2 \beta ( 4 (\frac{\varepsilon}{R})^2 +3 ) } E_2^2 +  C_3 e^{8 \beta} \delta (R) \right).
\end{align}

Finally observe that, if besides \eqref{eqn-221005-1224}, we also require that $\beta$ and $R$ satisfy
\begin{equation} \label{eqn-221005-0102}
\beta^3 R^{-6} > 2 \lambda^{-2} \epsi^{-2} \norm{V}_{L_x^{\infty}}^2, 
\quad
\beta^3 R^{-6}\varepsilon^2 E_2^2> \lambda^{-2} ( C_2E_2^2 +  C_1 E_1^2 ),
\end{equation}
then all except the $\delta(R)$ term on the right-hand side can be absorbed, from which
\begin{align}
\delta (R) \geq (\beta^3 R^{-6}\varepsilon^2E^2_2- \lambda^{-2} C_2E_2^2- \lambda^{-2} C_1 E_1^2)\,e^{-2\beta}.
\end{align}

For \eqref{eqn-221005-1224} and \eqref{eqn-221005-0102}, it suffices to choose
\begin{equation*}
    \beta = \frac{ 10 \|\varphi''\|_{L^\infty} }{\lambda \epsi} R^3,
\end{equation*}
which means $C_0 = 10\|\varphi''\|_{L^\infty}/(\lambda\epsi)$, and restrict
\begin{equation*}
    R \geq R_1 := C_4 (n , \lambda , \Lambda,  \norm{A}_{C^3} , M_1, \|\varphi''\|_{L^\infty}, \varepsilon ,  E_1 , E_2)+R_0 +1,
\end{equation*}
with $C_4$ sufficiently large.
This concludes the proof of  Theorem \ref{thm-220815-1155}. 
\end{proof}

\section{Proof of Theorem \ref{thm-220816-1210}}\label{sec Main}

We include the proof of Theorem \ref{thm-220816-1210} in this section.
\begin{proof}
We prove Theorem \ref{thm-220816-1210} by contradiction. Suppose that $u\not\equiv 0$, then there exist some $R_0>0$ sufficiently large and $\epsi>0$ sufficiently small (might both depend on $u$), such that
\begin{equation*}
    \int_{1/4}^{3/4} \int_{B_{R_0} \setminus B_{2\epsi}} |u|^2 \,dxdt \in (0,\infty) .
\end{equation*}
Furthermore, from Theorem \ref{thm-220515-0921},
\begin{equation*}
    \int_{1/8}^{7/8} \int_{\bR^n} (|u|^2 + |\nabla u|^2) \,dxdt < \infty,
\end{equation*}
hence, applying Theorem \ref{thm-220815-1155}, we have the lower bound
\begin{equation}\label{eqn-1309-119}
	\int_{1/8}^{7/8}\int_{B_R\setminus B_{R-1}} |u|^2 + |\nabla u|^2 \,dxdt \geq C_1e^{-C_0R^3},
\end{equation}
with the same $C_0$ chosen as in Theorem \ref{thm-220815-1155}. 
On the other hand, from Corollary \ref{cor-220515-1117} with $\alpha = 3/2$ and the decay properties of $u_0, u_1$, for $\kappa > \kappa_0$, we have
\begin{equation*}
	\int_0^1 \int_{\bR^n} e^{\kappa|x|^3}\big(|u(t,x)|^2 + t(1-t)|\nabla u(t,x)|\big)\,dxdt < \infty,
\end{equation*}
implying that
\begin{equation*}
	\lim_{R\rightarrow \infty} e^{\kappa R^3} \left(\int_{1/8}^{7/8} \int_{B_{R+1}\setminus B_R} |u(t,x)|^2 + |\nabla u(t,x)|^2 \,dxdt \right) = 0.
\end{equation*}
Choosing $\kappa$ large enough, we reach a contradiction with \eqref{eqn-1309-119}. Now Theorem \ref{thm-220815-1155} is proved.
\end{proof}

\section{Proof of Theorem \ref{thm-221005-0218}} \label{sec-221005-0114} 
In this section, we focus on the case when the matrix coefficients $a_{kj}$ are translation-invariant in one spatial variable.  Without loss of generality, we assume such space variable to be $x_1$, and we write a point in $\bR^n$ as
\begin{equation*}
	x = (x_1,x')\in \bR\times\bR^{n-1}.
\end{equation*}
\begin{assumption}\label{ass-220517-1100}
Let
	\begin{equation*}
		A=A(x')=
		\begin{pmatrix}
			\overline{a_{11}} & 0\\0&\widetilde{A}(x')
		\end{pmatrix},
	\end{equation*}
where $\overline{a_{11}}>0$ is a constant, and $\widetilde{A}\in C^3(\bR^{n-1})$ is a symmetric $(n-1)\times(n-1)$ matrix .
\end{assumption}
Later we will impose smallness conditions on $|x'||\nabla_{x'}\widetilde{A}|$. It is worth mentioning that we cannot require the previous decay assumption \eqref{eqn-221104-0447} anymore, since
\begin{equation*}
	\forall x'\in\bR^{n-1}, |x_1||\nabla_{x'}A(x')|\,\,\text{bounded as}\,\,|x_1|\rightarrow\infty \quad \text{implies} \quad \nabla_{x'}A\equiv 0, \quad \text{from which}\,\, A\equiv A(0).
\end{equation*}
Later, in the proof of Theorem \ref{thm-221005-0218}, we will show that if $A$ is as in \eqref{eq Structure}, then it can always be reduced in the form as in Assumption \ref{ass-220517-1100}. For this reason, up to the proof of Theorem \ref{thm-221005-0218} we shall work with $A$ as in Assumption \ref{ass-220517-1100}.

The proof of Theorem \ref{thm-221005-0218} includes the following ingredients:
\begin{itemize}
    \item A $log$-convexity result in Section \ref{sec-221104-0525}; 
    \item A ``better'' Carleman estimate in Section \ref{sec-221104-0526}, from which we derive a ``sharp'' lower bound;
    \item A contradiction argument in Section \ref{sec-221104-0527}.
\end{itemize}

\subsection{$log$-convexity} \label{sec-221104-0525}
The results in Theorem \ref{thm-220515-0921} turn out to be still true under Assumption \ref{ass-220517-1100}.
\begin{theorem}\label{thm-sharp-log}
    Let $u\in C([0,1], L^2(\mathbb{R}^n))$ be a solution to \eqref{eqn-220312-0505} with $\mathcal{L}$ defined as in \eqref{eqn-220517-0913} satisfying Assumption \ref{ass-220517-1100}. Let also $V$ be real-valued and such that $M_1:=\|V\|_{L^\infty}<\infty$.
    	Then there exist a small enough $\epsi_0 = \epsi_0(n,\lambda, \Lambda) >0$ and a large enough $\beta_2 = \beta_2(n,\lambda, \Lambda, \|A\|_{C^3})$, such that if 

    \begin{equation} \label{eqn-221104-1154}
	\sup_{x'\in \bR^{n-1}}|x'||\nabla_{x'}\widetilde{A}|\leq\epsi_0
    \end{equation}
    and
	\begin{equation}\label{thm-sharp-log-eqn-1}
		e^{\beta|x|^2}u(0,x), \, e^{\beta|x|^2}u(1,x) \in L^2(\bR^n), \quad \text{for some} \,\, \beta>\beta_2,
	\end{equation}
then we have 
\begin{equation}\label{thm-sharp-log-eqn-1.1}
		\|e^{\beta|x|^2}u(t,x)\|_{L^2_x}^2 
		\leq
		C e^{ M_1^2 } (\|e^{\beta|x|^2}u(0,x)\|_{L^2_x}^2)^{1-t} (\|e^{\beta|x|^2}u(1,x)\|_{L^2_x}^2)^t,
	\end{equation}
  and
	\begin{equation}\label{thm-sharp-log-eqn-1.2}
	\begin{split}
	&\beta\|\sqrt{t(1-t)}e^{\beta|x|^2}|\nabla u|\|_{L^2_{t,x}}^2 + \beta^3\|\sqrt{t(1-t)}e^{\beta|x|^2}|xu|\|_{L^2_{t,x}}^2 
	\\&\leq 
	 Ce^{M_1^2} ( \|e^{\beta|x|^2}u(0,x)\|^2_{L^2_{x}} + \|e^{\beta|x|^2}u(1,x)\|^2_{L^2_{x}} ).
	\end{split}
	\end{equation}
	Here $C$ is an absolute constant.
	
	In particular, when $A=I_n$, we can take $\beta_2=0$.
	
\end{theorem}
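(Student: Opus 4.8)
The plan is to mimic the proof of Theorem~\ref{thm-220515-0921} almost verbatim, since the only structural change is that the leading matrix $A$ now has the block form of Assumption~\ref{ass-220517-1100} and the smallness condition \eqref{eqn-221104-1154} is imposed only on $|x'||\nabla_{x'}\widetilde{A}|$ rather than on $|x||\nabla A|$. As in Section~\ref{sec-220517-1106}, I would first pass to the dissipative approximation $u_\epsi = e^{t\epsi(\cL+V)}u$, which solves $\p_t u_\epsi = (\epsi + i)(\cL + V)u_\epsi$; apply Lemma~\ref{lem-220705-1134} to propagate a (slightly weaker) Gaussian decay of $u_\epsi(1,\cdot)$ from that of $u(1,\cdot)$; then establish the analogue of Proposition~\ref{prop-220704-0508} for the block-structured $A$; and finally send $\epsi\to 0$. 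The regularity/decay inputs (Lemma~\ref{lem-220706-1211}) and the Poincar\'e-type inequality (Lemma~\ref{lem-220515-0957}) are coefficient-robust and carry over unchanged, so the only genuine work is redoing the commutator lower bound \eqref{eqn-220515-0938}.

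The key step is the analogue of the computation \eqref{eqn.comm.convexity}--\eqref{eqn-220515-1028-1}. Here I would write $\cL = \overline{a_{11}}\p_1^2 + \widetilde{\cL}$ with $\widetilde{\cL} = \p_j(\widetilde{a}_{jk}\p_k)$ acting in $x'$, and use that $\widetilde{A}$ depends only on $x'$ while $\overline{a_{11}}$ is constant. In the expansion of $[\cS,\cA]$ with weight $\phi = \beta|x|^2 = \beta(x_1^2 + |x'|^2)$, all the terms involving derivatives of $A$ only produce $\nabla_{x'}\widetilde{A}$, $\nabla^2_{x'}\widetilde{A}$, $\nabla^3_{x'}\widetilde{A}$ (no $x_1$-derivatives appear), so the ``error'' terms are controlled by $|x'||\nabla_{x'}\widetilde{A}|$, $|x'||\nabla^2_{x'}\widetilde{A}|$, etc., exactly as in \eqref{eqn-220508-0350-1} but with $|x|$ replaced by $|x'|$ in front of the $\widetilde{A}$-derivatives. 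The principal positive terms, coming from $-8\beta a_{kj}a_{mk}\delta_{kl}\p^2_{mj}$ and $32\beta^3 a_{kj}a_{kl}x_lx_j$, remain bounded below by $\lambda^2|\nabla f|^2$ and $\lambda^2|x|^2|f|^2$ via the full ellipticity \eqref{eqn-221030-0836} (the block structure does not weaken $A^2 \geq \lambda^2 I$). Hence choosing $\epsi_0$ small so that $C\Lambda|x'||\nabla_{x'}\widetilde{A}|\leq \lambda^2$ and $\beta_2 = \beta_2(n,\lambda,\Lambda,\|A\|_{C^3})$ large (absorbing the $\|\widetilde A\|_{C^3}$-dependent lower-order terms exactly as in \eqref{eqn-220515-1011}--\eqref{eqn-220906-0255}, using Lemma~\ref{lem-220515-0957}) yields the same convexity estimate $\left<[\cS,\cA]f,f\right> \geq 5\beta\lambda^2\|\nabla f\|^2 + 30\beta^3\lambda^2\|xf\|^2 - H(t)$.

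Once this commutator bound is in place, the rest is a copy of the proof of Lemma~\ref{lem-220711}: the algebraic identities \eqref{eqn-220609-1105}--\eqref{eqn-220704-0846} only use the symmetric/antisymmetric decomposition and are insensitive to the precise form of $A$; they yield $\log$-convexity of $H(t)e^{-2\widetilde M(t) + t^2(a^2+b^2)(1+M_1^2+M_2^2)}$, hence \eqref{thm-sharp-log-eqn-1.1}, and the time-weighted integration-by-parts argument \eqref{eqn-220521-0509}--\eqref{eqn220521-0508-1} gives \eqref{thm-sharp-log-eqn-1.2}. Finally, for the dissipative-case bootstrap (the $e^{\beta|x|^{2-2\delta}}$ a priori estimate of Proposition~\ref{prop-220704-0508}) one repeats the same commutator computation with the sub-Gaussian weight $\beta|x|^{2-\delta}$, where again only $|x'|$-powers multiply the $\widetilde{A}$-derivatives, and the extra $\beta\delta O(1)$ terms are absorbed by taking $\delta$ small. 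Sending $\epsi\to 0$ then $\delta\to 0$ and invoking monotone convergence as in Section~\ref{ssec 3.2} completes the proof; when $A = I_n$ there are no error terms at all and one may take $\beta_2 = 0$.

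The main obstacle I anticipate is purely bookkeeping rather than conceptual: verifying that in the commutator expansion \eqref{eqn-220815-0801-2} every occurrence of $\nabla A$ is in fact $\nabla_{x'}\widetilde A$ (so that multiplying by $|x|$ can be downgraded to multiplying by $|x'|$, which is what \eqref{eqn-221104-1154} controls), and that no uncontrolled term of the form $x_1\,(\text{derivative of }A)$ or $|x_1|\,|\nabla_{x'}\widetilde A|$ survives. This follows from $A = A(x')$ and $\p_1 A = 0$, but it must be checked term by term against the explicit lists for $T_1$ and $T_{0,2}$; once that is done, all remaining estimates are literal transcriptions of Section~\ref{sec-220517-1106}.
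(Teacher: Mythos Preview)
Your proposal is correct and follows essentially the same approach as the paper: both recognize that the proof of Theorem~\ref{thm-220515-0921} carries over verbatim except for the commutator lower bound \eqref{eqn-220515-1012}, and both reduce that step to checking that under Assumption~\ref{ass-220517-1100} every factor of $x_l$ multiplying a derivative of $A$ in the expansion of $[\cS,\cA]$ has $l\geq 2$ (so that $|x|$ downgrades to $|x'|$). The paper makes this explicit via the observation $a_{kj}(\p_k a_{ml})x_l = 0$ whenever any of $j,k,m,l$ equals $1$, which is exactly the term-by-term check you anticipate; once that is verified, the rest is indeed a literal transcription of Section~\ref{sec-220517-1106}.
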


\begin{proof}[Sketch of the proof of Theorem \ref{thm-sharp-log}] Recall that in Section \ref{sec-220517-1106}, the proof of the log-convexity property is based on a lower bound for $[\cS,\cA]$ and on the use of a viscosity argument (see Remark \ref{rmk-220711}). See the detailed strategy in Remark \ref{rmk-1103-1427}. Actually the only place where two different types of assumptions on $a_{jk}$ play a role lies in the the commutator estimate \eqref{eqn-220515-1012}, which we give a proof below.
    
    First notice that, taking the weight function $\phi=\beta |x|^2$, Assumption \ref{ass-220517-1100} implies each summand
    \begin{align}
    a_{kj}(\p_{k}a_{ml})\p_l\phi= 2a_{kj}(\p_{k}a_{ml})x_l=0,\quad \text{if at least one of $j, k, m, l$ equals $1$}.
     \label{eqn-1131024}
    \end{align}
Hence, the formulas in Section \ref{sub-220817}, specifically formulas in \eqref{eqn-1103-1042}, show that 
    \begin{align}
    [\cS,\cA]&=T_2+T_1+T_{0,1}+T_{0,2},
    \end{align}
   where $T_2,T_1,T_{0,1},T_{0,2}$ can be written as
   \begin{align}
   T_2&=-8\beta\int a_{kj}a_{mk}\p^2_{mj}-8\beta
   \sum_{j,k,l,m\geq 2}a_{kj}(\p_k a_{ml})x_l\p^2_{mj}+ 4\beta \sum_{j,k,l,m\geq 2}a_{ml}(\p_m a_{kj})x_l\p^2_{kj},\\
       T_1&=O(1)\beta(|\tA||\nabla_{x'}\tA|+|x'||\nabla_{x'}\tA|^2+|\tA||x'||\nabla_{x'}^2\tA|)\nabla,\\
       T_{0,1}&= 32\beta^3 a_{ml}a_{kj}x_l\,x_j\delta_{km}
        +16\beta^3\sum_{j,m,l\geq 2}a_{ml}(\p_m a_{kj})x_l x_k x_j,\\
        T_{0,2}&= O(1) \beta \big(|A||x'||\nabla_{x'}^3\tA|+|A||\nabla_{x'}^2\tA| +|\nabla_{x'}\tA|^2+|x'||\nabla_{x'}\tA||\nabla_{x'}^2\tA|\big).\label{eqn-1103-1144} 
   \end{align}
   Here and below, the usual Einstein convention of summation (summing from $1$ to $n$) is used if the range is not specified.
 Now, using \eqref{eqn-1103-1144} we can follow the steps in the proof of Lemma \ref{lem-220711} to conclude \eqref{eqn-220515-1012}. Indeed,
    \eqref{eqn-220508-0350-1} - \eqref{eqn-220515-0938}
    become
\begin{align}
		\left<[\cS,\cA]f, f\right>_{L_x^2}
		&= 
		8\beta\int a_{kj}a_{mk}\p_m f\p_j f \,dx+ \sum_{m,l\geq 2} \int \left(8\beta a_{kj}(\p_k a_{ml})x_l\p_m f \p_j f - 4\beta a_{ml}x_l (\p_m a_{kj})\p_k f \p_j f\right) \,dx \\
		&\quad 
		-O(1)\beta\int\big(
		|\tA||\nabla_{x'}\tA|+|x'||\nabla_{x'}\tA|^2+|\tA||x'||\nabla_{x'}^2\tA|
		\big)|\nabla f||f| \,dx \\
		&\quad +
		32\beta^3 \int a_{kj}a_{kl}x_lx_j|f|^2 \,dx + \sum_{m,l\geq 2} \int 16\beta^3  a_{ml}x_l\p_m(a_{kj})x_kx_j|f|^2\,dx  \\
		&\quad - O(1)\beta \int\big(|A||x'||\nabla_{x'}^2\tA|+|A||\nabla_{x'}\tA|+|A||\nabla_{x'}^2\tA| +|\nabla_{x'}\tA|^2\Big)|f|^2 \,dx
		\\&\geq \beta(8\lambda^2 - C|x'||\nabla_{x'}\tA|-C\delta)\int |\nabla f|^2 \,dx + \beta^3(32\lambda^2 - C\epsi_0)\int |xf|^2 \,dx
		\\&\quad - O(1)\frac{1}{\delta}\beta\int\big(
		|\tA||\nabla_{x'}\tA|+|x'||\nabla_{x'}\tA|^2+|\tA||x'||\nabla_{x'}^2\tA|
		\big)|f|^2 \,dx
		\\&\quad - O(1)\beta \int\big(|A||x'||\nabla_{x'}^2\tA|+|A||\nabla_{x'}\tA|+|A||\nabla_{x'}^2\tA| +|\nabla_{x'}\tA|^2\big)|f|^2 \,dx ,\\
		&\geq \beta(8\lambda^2 - C\epsi_0-C\delta) \|\nabla f\|_{L_x^2}^2+\int \beta^3\Big(32\lambda^2 -\frac{C}{\beta^2}( 1+\delta^{-1}) (|A||\nabla_{x'}^2\tA|+
	+|\nabla_{x'}\tA|^2 )\Big)|xf|^2\,dx
	\\
		&\quad -C\beta (1+\delta^{-1})\int \big(|\tA||\nabla_{x'}\tA|+|\nabla^2_{x'}\tA| \big)|f|^2\,dx, \label{eqn-150922-1620}
\end{align}
where the factor $|x_1\nabla A|$ does not appear in the leading terms due to Assumption \ref{ass-220517-1100}. Finally, choosing $\varepsilon_0$ and $\delta$ small enough we get \eqref{eqn-220515-1012}. 

The rest of the proof remains the same with Section \ref{sec-220915-0640}, and hence, is omitted.

\end{proof}

\subsection{``Better'' Carleman estimate and ``sharp'' lower bound} \label{sec-221104-0526}

Next, we prove a Carleman inequality with an ``$x_1$-translated'' weight.

\begin{lemma}\label{lem-220516-1205} 
	Let $\cL$ be the operator in \eqref{eqn-220517-0913} with $a_{kj}$ satisfying Assumption \ref{ass-220517-1100}. Let $R>1$ and $\varphi=\varphi(t) \in C^\infty_c(\bR)$. 
	Then there exists a large constant $c_0= c_0(n, \lambda, \Lambda, \|A\|_{C^3} ,\|\varphi'\|_{L^\infty}, \|\varphi''\|_{L^\infty} )>0$, such that if \eqref{eqn-221104-1154} is satisfied, then for any $R \geq 1$, $f\in C^\infty_c(\bR\times\bR^n)$ with
	\begin{equation}\label{eqn-220513-1138}
		\operatorname{supp}(f)\subset \{|x/R + \varphi(t)\vec{e}_1|\geq 1\},
	\end{equation}
	and 
	\begin{equation} \label{eqn-221012-0209}
	    \beta \geq \beta_3:=c_0 R^2,
	\end{equation}
	we have
	\begin{equation}
	    \frac{\beta}{R^2}\|\nabla_x f\|_{L^2_{t,x}}^2 + \frac{\beta^3}{R^6}\||x/R+\varphi\vec{e}_1|f\|_{L^2_{t,x}}^2
		\leq
		C\|e^{\beta|x/R + \varphi\vec{e}_1|^2}(i\p_t + \cL)e^{-\beta|x/R+\varphi\vec{e}_1|^2}f\|^2_{L^2_{t,x}}.
	\end{equation}
	Here $\vec{e}_1$ is the unit vector $(1,0,\ldots,0)$.
\end{lemma}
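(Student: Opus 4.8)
The strategy mirrors the proof of Lemma \ref{lem-220816-1207}, but now with the shifted weight $\phi = \beta\,|x/R + \varphi(t)\vec{e}_1|^2$. First I would decompose $e^{\phi}(i\p_t + \cL)e^{-\phi} = \cS + \cA$ into its symmetric and antisymmetric parts with respect to $\inner{\cdot,\cdot}_{L^2_{t,x}}$, using the general formulas from Section \ref{sub-220817}. The point of the $x_1$-translation is that the derivatives of $\phi$ pick up a shift only in the $x_1$-slot:
\begin{equation*}
    \p_j\phi = \frac{2\beta}{R}\Bigl(\frac{x_j}{R} + \varphi\,\delta_{j1}\Bigr),\quad \p_{kj}^2\phi = \frac{2\beta}{R^2}\delta_{kj},\quad \p_t\phi = 2\beta\varphi'\Bigl(\frac{x_1}{R}+\varphi\Bigr),\quad \nabla^3\phi = 0,
\end{equation*}
while $\p_{tt}\phi = 2\beta(\varphi')^2 + 2\beta\varphi''(x_1/R+\varphi)$. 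I would then compute $\inner{[\cS,\cA]f,f}_{L^2_{t,x}} \le \|e^\phi(i\p_t+\cL)e^{-\phi}f\|_{L^2_{t,x}}^2$ and extract the positive leading terms. The crucial structural input is Assumption \ref{ass-220517-1100}: since $a_{11}$ is constant and $\widetilde A = \widetilde A(x')$, every term of the form $a_{kj}(\p_k a_{ml})(\p_l\phi)$ vanishes whenever any index equals $1$ — in particular the dangerous term $a_{kj}(\p_k a_{ml})\varphi\,\delta_{l1}$ never appears, because it would require $l=1$. This is exactly the cancellation \eqref{eqn-1131024} already exploited in Theorem \ref{thm-sharp-log}, and it is what makes the translated weight admissible. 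After the ellipticity bound $a_{kj}a_{mk}\xi_m\xi_j \ge \lambda^2|\xi|^2$, the leading positive contributions are $c\,\beta R^{-2}|\nabla f|^2$ and $c\,\beta^3 R^{-6}|x/R + \varphi\vec e_1|^2|f|^2$.

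The error terms fall into three groups, to be absorbed as in Lemma \ref{lem-220816-1207}: (i) terms carrying a factor $|x'||\nabla_{x'}\widetilde A|$ — controlled by choosing $\epsi_0$ small in \eqref{eqn-221104-1154}, noting that only $x'$-derivatives of $A$ occur and $|x'| \le |x|$ on the relevant region; (ii) lower-order terms with $\|\widetilde A\|_{C^3}$ and extra powers of $|x|^{-1}$, absorbed by the $\beta^3 R^{-6}|x/R+\varphi\vec e_1|^2|f|^2$ term once $\beta \ge c_0 R^2$ with $c_0$ large (this uses the support condition \eqref{eqn-220513-1138}, which forces $|x/R + \varphi\vec e_1| \ge 1$, so that the cubic-in-$\beta$ term dominates the $\beta$-linear ones); (iii) the new time terms $\beta\,\p_{tt}\phi\,|f|^2 = (2\beta(\varphi')^2 + 2\beta\varphi''(x_1/R+\varphi))|f|^2$ together with the first-order time term $-4i(\p_t\p_l\phi)a_{ml}\p_m$ from $T_1$. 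These are bounded by $C\beta(\|\varphi'\|_{L^\infty}^2 + \|\varphi''\|_{L^\infty})(1 + |x/R+\varphi\vec e_1|)|f|^2$ plus a small multiple of $\beta R^{-2}|\nabla f|^2$, and hence are again absorbed into the two leading terms provided $c_0$ is chosen depending on $\|\varphi'\|_{L^\infty}, \|\varphi''\|_{L^\infty}$ — which is precisely the dependence recorded in the statement. One should double-check that $|x/R+\varphi\vec e_1|\ge 1$ on $\supp f$ lets one replace ``$1$'' by ``$|x/R+\varphi\vec e_1|^2$'' wherever a bare constant appears, exactly as $r_0$ was used in Lemma \ref{lem-220816-1207}.

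I expect the main obstacle to be bookkeeping rather than conceptual: one must carefully verify that the translation in $x_1$ does not spoil the commutator structure, i.e. that the potentially problematic cross-terms between $\varphi\vec e_1$ and $\nabla_{x'}\widetilde A$ genuinely vanish by Assumption \ref{ass-220517-1100} and do not merely become small. A secondary technical point is handling the first-order term $-4i(\p_t\p_l\phi)a_{ml}\p_m = -\tfrac{8i\beta\varphi'}{R}a_{m1}\p_m$ (with $a_{m1} = \overline{a_{11}}\delta_{m1}$, so it reduces to $-\tfrac{8i\beta\varphi'\,\overline{a_{11}}}{R}\p_1$): this is a genuine first-order operator, not a zeroth-order multiplier, so it must be paired against $f$ via Cauchy–Schwarz as $C\beta R^{-1}\|\varphi'\|_{L^\infty}\|\p_1 f\|_{L^2}\|f\|_{L^2} \le \tfrac{1}{10}\beta R^{-2}\|\nabla f\|_{L^2}^2 + C\beta\|\varphi'\|_{L^\infty}^2 R^0\|f\|_{L^2}^2$ and the resulting $\|f\|^2$ term absorbed into the cubic term using $\beta \ge c_0 R^2$. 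Once all errors are absorbed, setting $c_0$ large enough (in terms of $n,\lambda,\Lambda,\|A\|_{C^3},\|\varphi'\|_{L^\infty},\|\varphi''\|_{L^\infty}$) yields the claimed inequality.
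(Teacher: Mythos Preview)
Your proposal is correct and follows essentially the same approach as the paper: decompose into $\cS+\cA$ using the formulas from Section \ref{sub-220817}, exploit Assumption \ref{ass-220517-1100} to kill every term in which an index equals $1$ (so the $\varphi\delta_{l1}$ shifts disappear from all $\nabla A$-terms), then absorb the remaining errors using the smallness condition \eqref{eqn-221104-1154}, a Cauchy--Schwarz split for the first-order term $-8i\beta R^{-1}\varphi'\,\overline{a_{11}}\,\p_1$, and the support condition together with $\beta\ge c_0R^2$. The only refinement worth noting is that where you write ``$|x'|\le|x|$'', the paper more precisely uses $|x'/R|\le|x/R+\varphi(t)\vec e_1|$ (the translation leaves $x'$ untouched), which is what lets you compare the $|x'|$-weighted $\|\widetilde A\|_{C^3}$ errors directly against the leading $|x/R+\varphi\vec e_1|^2$ term.
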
 

\begin{proof}
    Denote
    \begin{equation*}
        \phi = \phi(t,x) = \beta |x/R + \varphi(t) \vec{e}_1|^2.
    \end{equation*}
For all $j,k,l\in\{1,\ldots,n\}$, Assumption \ref{ass-220517-1100} and the choice of $\phi$ imply
\begin{align} \label{eqn-1103-1750}
    a_{j1}=a_{1j}=a_{11}\delta_{j1}, \quad \p_ka_{l1}&=\p_ka_{1l}=0, \quad \p_1 a_{kl}=0,
\end{align}
and
\begin{align}        
\p^3_{jkl}\phi=0, \quad \p^2_{jk}\phi=2\beta R^{-2}\delta_{jk}, \quad \p^2_{jt}\phi=2\beta R^{-1}\varphi'\delta_{j1}.
\end{align}
Hence,
    \begin{align}
        \p_l a_{jk}&=0 \quad \text{if at least one of $j, k, l$ equals $1$},\\
        a_{ml} (\p_m a_{kj})&=0 \quad  \text{if at least one of $j, k, m, l$ equals $1$},\\
        a_{ml} (\p_m a_{kj})(\frac{x_l}{R} +  \varphi(t) \delta_{l1})&=0 \quad \text{if at least one of $j, k, m, l$ equals $1$}.
        \label{eqn-1103-1749}
    \end{align}
    In particular, there is no $\p_{x_1}$ or $(x_1\cdot)$ in every term of the form $a_{ml} (\p_m a_{kj})(\frac{x_l}{R} +  \varphi(t) \delta_{l1})$.
    Now, formulas \eqref{eqn-1103-1042}  in Section \ref{sub-220817} and \eqref{eqn-1103-1750} -- \eqref{eqn-1103-1749}  lead to
    \begin{align}
        [\cS, \cA] = T_2 + T_1 + T_{0,1} + T_{0,2},
    \end{align}
    where
    \begin{align}
        T_2
        &= 
        - \frac{8\beta}{R^2} a_{kj} a_{mk} \p^2_{mj} - \sum_{j,k,m,l\geq 2} \frac{8\beta}{R} a_{kj} (\p_k a_{ml}) (\frac{x_l}{R} +  \cancel{ \varphi(t) \delta_{l1} } ) \p^2_{mj} + \sum_{j,k,m,l\geq 2} \frac{4\beta}{R} a_{ml} (\p_m a_{kj}) (\frac{x_l}{R} + \cancel{ \varphi(t)\delta_{l1} })\p^2_{kj}
        \\ T_1 &=
        -8i\frac{\beta}{R}\varphi' a_{11}\p_1 + O(1) \frac{\beta}{R^2}\left( |\nabla_{x'} \tilde{A}| +  |x'| |\nabla_{x'} \tA|^2 + |x'| |\nabla^2_{x'}\tA|) \right)\nabla
        \\ T_{0,1} &=
        \frac{32 \beta^3}{R^4} a_{kl} a_{kj} ( \frac{x_l}{R} + \varphi(t) \delta_{l1} ) ( \frac{x_j}{R} + \varphi(t) \delta_{j1} ) 
        \\&\quad
        + \sum_{j,k,m,l \geq 2} \frac{16 \beta^3}{R^3} a_{ml} (\p_m a_{kj}) ( \frac{x_l}{R} + \cancel{\varphi(t) \delta_{l1}} ) ( \frac{x_k}{R} + \cancel{\varphi(t) \delta_{k1}} ) ( \frac{x_j}{R} + \cancel{\varphi(t) \delta_{j1}} )\\
        &=\frac{32 \beta^3}{R^4} a_{kl} a_{kj} ( \frac{x_l}{R} + \varphi(t) \delta_{l1} ) ( \frac{x_j}{R} + \varphi(t) \delta_{j1} ) + O(1)\frac{\beta^3}{R^6}( |\nabla_{x'}\tA||x'|^3 )
        \\ T_{0,2} &=
        2\frac{\beta}{R} (\frac{x_1}{R} + \varphi) \varphi'' + 2\frac{\beta}{R} (\varphi')^2 \\
        &\quad + O(1) \frac{\beta}{R} \big( \left|\frac{x'}{R}\right||\nabla_{x'}^3\tA| + |\nabla_{x'}^2\tA| + |\nabla_{x'}\tA|^2 + \left|\frac{x'}{R}\right||\nabla_{x'}\tA||\nabla_{x'}^2\tA|\big).
\end{align}
To bound the commutator $[\cS,\cA]$ we proceed as in \eqref{eqn-150922-1620} with suitable adjustments. Since 
\begin{equation}
    R \geq 1, \quad |x'/R| \leq |x/R+\varphi(t)e_1| \,\, \forall (x,t), \quad \text{and} \quad  |x/R+\varphi(t)e_1|\geq 1 \,\,  \text{on} \,\,\mathrm{supp}f,
\end{equation} 
what we get is
\begin{align}
    &\left< [\cS, \cA] f , f \right>_{L^2_{t,x}} \\
    &\geq \beta R^{-2}(8\lambda^2-C\varepsilon_0-C  \delta )\|\nabla f\|^2_{L^2_{t, x}}
     - \frac{C}{\delta}\beta  ( \|\varphi'\|_{L^\infty}^2 + \|\tilde{A}\|_{C^2}^2 ) \| |x/R+\varphi(t)e_1| f\|_{L^2_{t,x}}^2 
    \\
    &\quad + \frac{\beta^3}{R^4}(32\lambda^2-C\varepsilon_0 ) \| |x/R+\varphi(t)e_1|f\|^2_{L^2_{t,x}}-\frac{\beta}{R}C(\|\tilde A\|_{C^3},\|\varphi'\|_{L^\infty},\|\varphi''\|_{L^\infty})\| |x/R+\varphi(t)e_1|f\|^2_{L^2_{t,x}}.
\end{align}
Hence, by choosing $\varepsilon_0$ and  $\delta$ small enough,
\begin{align}
&\left< [\cS, \cA] f , f \right>_{L^2_{t,x}}  \\
&\quad \geq 7\beta R^{-2}\|\nabla f\|^2_{L^2_{tx}}+  \beta^3 R^{-4}\big( 31 \lambda^2-C(\|\tilde A\|_{C^3},\|\varphi'\|_{L^\infty},\|\varphi''\|_{L^\infty})(\beta^{-2} R^4 + \beta^{-2}R^3) \big)\| |x/R+\varphi(t)e_1|f\|^2_{L^2_{t,x}}.
\end{align}
Finally, as in the proof of Lemma \ref{lem-220816-1207}, to have the conclusion it suffices to assume $\beta\geq c_0R^2$ with a sufficiently large $c_0 = c_0 (n,\lambda,\Lambda, \|\tilde A\|_{C^3}, \|\varphi'\|, \|\varphi''\|)$, and  with $R\geq 1$. This completes the proof.
\end{proof}
Compared to Lemma \ref{lem-220816-1207}, the ``better'' weight in Lemma \ref{lem-220516-1205} leads to the following sharp lower bound.

\begin{theorem}\label{thm-220513-1127}
 Let $u\in L^\infty L^2 \cap L^2 H^1$ be a solution of \eqref{eqn-220312-0505} with $a_{kj}$ satisfying Assumption \ref{ass-220517-1100} and $V \in L^\infty([0,1]\times\mathbb{R}^n, \mathbb{R})$, and denote by $M_1:=\|V\|_{L^\infty}$.
 
    Furthermore, let $E_1, E_2, R_0$ be the numbers such that
    \begin{equation}\label{thm-150922-1644-eq0}
        \int_{1/8}^{7/8} \int_{\mathbb{R}^n} \left(|u|^2+|\nabla u|^2\right)(t,x) \,dxdt\leq E_1^2<\infty
    \end{equation}
    and
    \begin{equation} \label{thm-150922-1644-eq2}
        \int_{1/4}^{3/4}\int_{B_{R_0}}|u|^2(t,x) \,dxdt \geq E_2^2 > 0,
      \end{equation}
    then there exist some positive constants $R_1=R_1(n, \lambda , \Lambda, \norm{\tilde{A}}_{C^3} , M_1,E_1 , E_2 , R_0)$,  $C_0 = C_0(n, \lambda, \Lambda, \|\tilde{A}\|_{C^3} )$, and $C_1=C_1(n, \lambda , \Lambda, \norm{\tilde{A}}_{C^3} ,M_1,E_1 , E_2 , R_0)$, such that, for any $R > R_1$,
    \begin{equation}\label{thm-150922-1644-eq1}
        \delta(R) := \int_{1/8}^{7/8} \int_{B_R\setminus B_{R-1}} |u|^2 + |\nabla u|^2 \,dxdt \geq C_1\,e^{-C_0R^2}.
    \end{equation}
\end{theorem}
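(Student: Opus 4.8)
The plan is to mirror the proof of Theorem~\ref{thm-220815-1155}, replacing the cubic-weight Carleman inequality of Lemma~\ref{lem-220816-1207} by the ``$x_1$-translated'' quadratic-weight estimate of Lemma~\ref{lem-220516-1205}. The essential point is that Lemma~\ref{lem-220516-1205} only requires $\beta\geq\beta_3=c_0R^2$ rather than $\beta\gtrsim R^3$, so the localization loss will be of the form $e^{O(\beta)}=e^{O(R^2)}$, which is exactly the Gaussian rate in \eqref{thm-150922-1644-eq1}. A minor simplification over the general case is that no inner cutoff near the origin is needed: the translated weight obeys $|x/R+\varphi(t)\vec{e}_1|\geq 1$ on the relevant support, so there is no degeneracy of the leading positive term to be excised, which is why the hypothesis \eqref{thm-150922-1644-eq2} is posed on $B_{R_0}$ rather than on an annulus.

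Concretely, I would fix a time cutoff $\varphi\in C^\infty_c((1/8,7/8))$ with $0\le\varphi\le 3$ and $\varphi\equiv 3$ on $[1/4,3/4]$, a smooth $\theta$ with $\theta(r)=0$ for $r\le 1$ and $\theta(r)=1$ for $r\ge 2$, and a spatial cutoff $\theta^{(R)}$ equal to $1$ on $B_{R-1}$, supported in $B_R$, with $|\nabla\theta^{(R)}|+|\nabla^2\theta^{(R)}|\le C$, and set $g:=\theta(|x/R+\varphi(t)\vec{e}_1|^2)\,\theta^{(R)}(x)\,u(t,x)$ (extended by zero off $[0,1]$; a routine mollification lets one apply Lemma~\ref{lem-220516-1205} to $f:=e^{\phi}g$ with $\phi=\beta|x/R+\varphi(t)\vec{e}_1|^2$). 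One then checks the analogues of the properties used in Theorem~\ref{thm-220815-1155}: $\supp(g)\subset([0,1]\times B_R)\cap\{|x/R+\varphi\vec{e}_1|\ge 1\}$, so the support hypothesis of Lemma~\ref{lem-220516-1205} holds; for $t\in[0,1/8]\cup[7/8,1]$ one has $\varphi=0$ and $|x/R|\le 1$ on $B_R$, hence $g\equiv 0$; and for $R>R_0$, on $[1/4,3/4]\times B_{R_0}$ one has $|x/R+3\vec{e}_1|>2$ and $\theta^{(R)}=1$, so $g=u$. Crucially, on each region where $u$ (rather than its tail in $B_R\setminus B_{R-1}$) enters, the exponent stays controlled: $\phi\le 16\beta$ on $[1/4,3/4]\times B_{R_0}$ and on $\supp(\nabla\theta^{(R)})$, $\phi\ge 4\beta$ on $[1/4,3/4]\times B_{R_0}$, while $\phi\in[\beta,2\beta]$ on $\supp(\theta'(|x/R+\varphi\vec{e}_1|^2))$, all uniformly in $R$.

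Then I would run the absorption argument exactly as before. The left side of the Carleman inequality is bounded below by $\beta^3R^{-6}\||x/R+\varphi\vec{e}_1|f\|_{L^2_{t,x}}^2$, which splits, using $|x/R+\varphi\vec{e}_1|\ge 1$ on $\supp(f)$ together with $g=u$ and $\phi\ge 4\beta$ on $[1/4,3/4]\times B_{R_0}$, into $\tfrac12\beta^3R^{-6}\|e^{\phi}g\|_{L^2_{t,x}}^2$ plus $c\,\beta^3R^{-6}e^{8\beta}E_2^2$. On the right side, $(i\partial_t+\cL)g=-Vg+[i\partial_t+\cL,\theta\theta^{(R)}]u$: the potential term is $\le M_1^2\|e^{\phi}g\|_{L^2_{t,x}}^2$; the commutator terms in which a derivative lands on $\theta(|x/R+\varphi\vec{e}_1|^2)$ are supported in $\{1\le|x/R+\varphi\vec{e}_1|^2\le 2\}\cap([1/8,7/8]\times B_R)$ and contribute $\le Ce^{4\beta}E_1^2$ by \eqref{thm-150922-1644-eq0}; and the terms with derivatives only on $\theta^{(R)}$ are supported in $[1/8,7/8]\times(B_R\setminus B_{R-1})$, contributing $\le Ce^{32\beta}\delta(R)$. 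Taking $\beta=c_0R^2$ with $c_0$ large (so that $\beta\ge\beta_3$, $\tfrac12\beta^3R^{-6}\ge C\lambda^{-2}M_1^2$, and, for $R\ge R_1$, the term $c\,\beta^3R^{-6}e^{8\beta}E_2^2$ dominates $C\lambda^{-2}e^{4\beta}E_1^2$), one absorbs the $\|e^{\phi}g\|^2$ and $E_1$ terms and is left with $c'\beta^3R^{-6}e^{8\beta}E_2^2\le Ce^{32\beta}\delta(R)$, i.e. $\delta(R)\ge c''E_2^2\,e^{-24\beta}=c''E_2^2\,e^{-24c_0R^2}$. This gives \eqref{thm-150922-1644-eq1} with $C_0=24c_0$ and $C_1=c''E_2^2$, after enlarging $R_1$ as needed to meet all the constraints above.

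Since the hard analytic content — a Carleman estimate with a weight that loses only $\beta\sim R^2$ — is already furnished by Lemma~\ref{lem-220516-1205}, I do not expect a genuine obstacle; the one step requiring care is the bookkeeping for the translated weight, namely verifying that $\phi=\beta|x/R+\varphi\vec{e}_1|^2$ remains $O(\beta)$ on every support appearing in the commutator estimate (which it does because $\varphi$ is bounded and $|x|\le CR$ there), so that the constants multiplying $E_1^2$, $E_2^2$, $\delta(R)$ carry only harmless $e^{O(\beta)}$ factors. This $R^2$-scaling, traceable to the time dependence being built into the quadratic form $|x/R+\varphi\vec{e}_1|^2$ rather than added on as in $|x/R|^2+\varphi(t)$ (so that $\partial_{tt}\phi$ no longer forces $\beta\gtrsim R^3$), is precisely what upgrades the absolute lower bound from the cubic rate $e^{-C_0R^3}$ of Theorem~\ref{thm-220815-1155} to the sharp Gaussian rate $e^{-C_0R^2}$.
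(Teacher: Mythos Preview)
Your proposal is correct and follows essentially the same route as the paper's own proof: build the localizer $g=\theta(|x/R+\varphi\vec{e}_1|^2)\theta^{(R)}u$ (no inner cutoff $\theta_{(\varepsilon)}$ is needed here, exactly as you note), verify the support properties, apply Lemma~\ref{lem-220516-1205} with $\beta=c_0R^2$, and absorb the potential and commutator terms using the bounds $\phi\le 16\beta$ on $\supp(g)$, $\phi\in[\beta,2\beta]$ on $\supp(\theta')$, and $\phi\ge 4\beta$ on $[1/4,3/4]\times B_{R_0}$. The only differences from the paper are cosmetic---you feed $|x/R+\varphi\vec{e}_1|^2$ into $\theta$ rather than $|x/R+\varphi\vec{e}_1|$, and your bookkeeping yields $C_0=24c_0$ versus the paper's $28C_0$---which do not affect the argument.
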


\begin{proof}[Proof of Theorem \ref{thm-220513-1127}]
We define the cutoff functions $\varphi=\varphi(t)$ and $\theta, \theta^{(R)} = \theta, \theta^{(R)} (x)$ as in the proof of Theorem \ref{thm-220815-1155}, and
\begin{equation*}
	g(t,x):=\theta^{(R)}(x)\theta(x/R+\varphi(t)\vec{e}_1)u(t,x).
\end{equation*}
Note that $\theta_{(\epsi)}$ is not needed here.
Clearly, from $\theta^{(R)}$ and $\theta$,
\begin{equation*}
	\operatorname{supp}(g)\subset \{|x|\leq R\}\cap\{|x/R + \varphi(t)\vec{e}_1|\geq 1\}.
\end{equation*}
Furthermore, when $t\in [0,1/8]\cup[7/8,1]$, $\varphi=0$. Hence, for any $|x|\leq R$ and every $t\in [0,1/8]\cup[7/8,1]$,
\begin{equation*}
	|x/R + \varphi(t)\vec{e}_1|= |x/R|\leq 1 \implies \theta(x/R + \varphi(t)\vec{e}_1)=0.
\end{equation*}
From all the above,
\begin{equation}\label{eqn-220523-0134}
	\operatorname{supp}(g)\subset \big([1/8 , 7/8] \times B_R \big) \cap \{|x/R + \varphi(t)\vec{e}_1|\geq 1\}.
\end{equation}
Also, the previous considerations allow us to apply Lemma \ref{lem-220516-1205} to $e^{\beta|x/R+\varphi\vec{e}_1|^2} g(t,x)$ to obtain: for all $\beta\geq \beta_3$, with $\beta_3$ as in \eqref{eqn-221012-0209},
\begin{equation}\label{eqn-220516-1216}
		\frac{\beta}{R^2}\|\nabla_x e^{\beta|x/R+\varphi\vec{e}_1|^2} g\|_{L^2_{t,x}}^2 + \frac{\beta^3}{R^6}\|e^{\beta|x/R+\varphi\vec{e}_1|^2}|x/R+\varphi\vec{e}_1| g\|_{L^2_{t,x}}^2
		\leq
		C\|e^{\beta|x/R + \varphi\vec{e}_1|^2}(i\p_t + \cL) g\|^2_{L^2_{t,x}}.
\end{equation}
Next, we bound the left and right-hand side of the Carleman estimate \eqref{eqn-220516-1216}, denoted, for short, by LHSC and RHSC respectively.

We start with LHSC, which we bound from below as in the proof of Theorem \ref{thm-220815-1155}. Suppose $R \geq R_0 + 1$. For any point
\begin{align}
(t,x)\in [1/4 , 3/4]\times B_{R_0} (\subset [1/4 , 3/4]\times B_{R-1}),
\end{align}
from the construction, we have 
\begin{equation*}
\theta^{(R)}(x)=1\quad \text{and}\quad	\varphi (t,x) = 3,\implies |x/R+\varphi(t)\vec{e}_1|\geq 3 - |x/R|\geq 2 \implies \theta(x/R+\varphi(t)\vec{e}_1)=1,
\end{equation*}
from which
\begin{equation*}
	g(t,x)=u(t,x) \quad \forall (t,x)\in [1/4 , 3/4]\times B_{R_0}.
\end{equation*}
Then, by \eqref{thm-150922-1644-eq2} and the properties of $\mathrm{supp}(g)$, we get
\begin{align}
    LHSC\geq \frac 1 2  \beta^3 R^{-6}\| |x/R+\varphi\vec{e}_1|e^{|x/R+\varphi\vec{e}_1|^2}g\|_{L^2_{t,x}}^2+\frac 1 2 \beta^3 R^{-6} E_2^2\geq \frac 1 2  \beta^3 R^{-6}\|e^{|x/R+\varphi\vec{e}_1|^2}g\|_{L^2_{t,x}}^2+\frac 1 2 \beta^3 R^{-6} E_2^2.
\end{align}

To estimate from above the term RHSC we follow the proof of Theorem \ref{thm-220815-1155}. By repeating the same steps in that proof, 
we get
\begin{align}
    RHSC\leq \|V\|^2_{L^\infty} \|e^{|x/R+\varphi\vec{e}_1|^2}g\|_{L^2_{t,x}}^2 + C\int_0^1\int_{\mathbb{R}^n}(|I_1|^2+|I_2|^2+|I_3|^2+|I_4|^2)\,dxdt,
\end{align}
where $I_j$, $j=1,\ldots,4,$ are as in \eqref{eqn-1910-1546} with $\theta_{(\varepsilon)}$ simply replaced by $1$ (recall, we do not have the cutoff $\theta_{(\varepsilon)}$ in $g$ here). This gives, in particular, that $I_3\equiv 0$. Now,  since $\theta'(x/R+\varphi\vec{e}_1)\neq 0$ if and only if $1<|x/R+\varphi\vec{e}_1|< 2$ and $t\in (1/8,7/8)$, we get
\begin{align}
 \int_0^1\int_{\mathbb{R}^n}(|I_1|^2+|I_2|^2)\,dxdt   \leq C e^{4\beta} E_1^2.
\end{align}
Also, since $|x/R+\varphi\vec{e}_1|^2\leq 16$ on $\mathrm{supp}(g)$, we have
\begin{align}
 \int_0^1\int_{\mathbb{R}^n}|I_4|^2\,dxdt   \leq C e^{32\beta} \delta(R).
\end{align}

Putting together the estimates for LHSC and RHSC, and arguing as in the end of the proof  of Theorem \ref{thm-220815-1155}, that is  by choosing $\beta=C_0 R^2$, with $C_0\geq c_0$ ($c_0$ as in Lemma  \ref{lem-220516-1205}, so that $\beta\geq \beta_3$), and $R$ sufficiently large, we find
$$\delta(R)\geq C_1 e^{-28C_0 R^2}.$$
Note finally that the power $R^2$ (instead of $R^3$ of the general case) in the lower bound for $\delta(R)$ is allowed by the choice $\beta=C_0 R^2$.
\end{proof}

\subsection{Proof of uniqueness} \label{sec-221104-0527}
\begin{proof}[Proof of Theorem \ref{thm-221005-0218}]
	Let us first make some reduction. Take the change of variables
	\begin{equation*}
		\begin{cases}
			y_1 = y_1(x_1)\\
			y'=x'
		\end{cases}\quad\text{with}\quad \frac{dy_1}{dx_1} = \frac{1}{\sqrt{a_{11}(x_1)}}
	\end{equation*}
and the gauge transform
\begin{equation*}
	v(t,y)=e^{\psi}u(t,x(y)),
\end{equation*}
where
\begin{equation*}
	\psi = \psi(y_1) = -\frac{1}{4}\int_0^{y_1}\frac{a_{11}'(s)}{a_{11}(s)}\,ds.
\end{equation*}
One could check that $v$ satisfies
\begin{equation*}
	\left(\p_t + i\p_{y_k}(b_{kj}\p_{y_j}\cdot)\right)v = \left(V + (\p_{y_1}\psi)^2 + \p^2_{y_1y_1}\psi \right)v,
\end{equation*}
where
\begin{equation*}
	B = (b_{kj})_{n\times n} = \begin{pmatrix}
		1&\\&\widetilde{A}(y')
	\end{pmatrix}
\end{equation*}
verifies Assumption \ref{ass-220517-1100}, and
\begin{equation*}
	V(x(y)) - (\p_{y_1}\psi)^2 - \p^2_{y_1y_1}\psi \in L^\infty_{y}.
\end{equation*}
See \cite{MR4332459}. Since
\begin{equation*}
	e^{\psi(y_1)}\leq e^{\frac{1}{4}\|a_{11}'\|_{L^\infty} (\inf\{a_{11}\})^{-1}|y_1|},
\end{equation*}
we still have
\begin{equation}\label{eqn-220524-0709}
	e^{C|y|^2}|u(0,y)|,\,\, e^{C|y|^2}|u(1,y)|\in L^2_y(\bR^n), \,\,\text{for some large $C$ depending only on} \,\, \beta \,\,\text{and} \,\, a_{11}.
	\end{equation}
	The rest of the proof stays the same as in Theorem \ref{thm-220816-1210}, with an application of Theorem \ref{thm-220513-1127} in place of that of Theorem \ref{thm-220815-1155}.
Theorem  \ref{thm-221005-0218} is proved.
\end{proof}

\appendix
\section{A subordination inequality}\label{sec AppA}
\textbf{Notations}. We shall use the notation $f\approx g$ to indicate that there exists a positive constant $C$ such that $1/Cg(x)\leq f(x)\leq Cg(x)$ for every $x$ in a suitable set. When the aforementioned constant depends on some parameters, say $a,b$, we shall often just use the symbol $\approx_{a,b}$. Also  we say $f (x)\lesssim g(x)$, if $f(x) \leq C g(x)$. 

\begin{lemma}\label{lem1.Appendix}
Let $p\in(1,2)$, $\lambda_0>0$, and $q$ the conjugate exponent of $p$, that is such that $1/p+1/q=1$. Then, for any $\kappa>2\lambda_0\left( \frac{2}{q-2}\right)^{\frac{1}{q}}$ and $r>0$,
\begin{equation*}
    \int_{\lambda_0}^{+\infty} e^{\lambda r-\frac{\lambda^q}{q\kappa^q}}\lambda^{\frac{q-2}{2}} \, d\lambda\approx_{\kappa,p}e^{\frac{\kappa^p r^p}{p}}.
\end{equation*}
\end{lemma}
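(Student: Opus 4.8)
The plan is to exploit the hypothesis on $\kappa$ to discard the lower endpoint $\lambda_0$, and then to estimate $\int_0^{\infty} e^{\lambda r-\lambda^q/(q\kappa^q)}\lambda^{(q-2)/2}\,d\lambda$ by a scaling substitution that puts it into a standard Laplace form concentrated at the critical point of the exponent. For the first step, set $F(\lambda):=e^{\lambda r-\lambda^q/(q\kappa^q)}\lambda^{(q-2)/2}$. Since $q>2$, the function $\log F(\lambda)=\lambda r-\lambda^q/(q\kappa^q)+\tfrac{q-2}{2}\log\lambda$ has $(\log F)''<0$ on $(0,\infty)$, so $F$ is unimodal with a unique maximizer $\lambda_{\max}$. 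A one-line computation shows that at $\lambda_0^\ast:=\kappa\big(\tfrac{q-2}{2}\big)^{1/q}$ one has $(\log F)'(\lambda_0^\ast)=r>0$, hence $\lambda_{\max}>\lambda_0^\ast$; moreover the hypothesis $\kappa>2\lambda_0(2/(q-2))^{1/q}$ is precisely $\lambda_0^\ast>2\lambda_0$. Thus $F$ is nondecreasing on $[0,2\lambda_0]$, so $\int_0^{\lambda_0}F\le \lambda_0 F(\lambda_0)\le\int_{\lambda_0}^{2\lambda_0}F\le\int_{\lambda_0}^{\infty}F$, whence $\int_{\lambda_0}^{\infty}F\le\int_0^{\infty}F\le 2\int_{\lambda_0}^{\infty}F$. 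It therefore suffices to prove $\int_0^{\infty} F(\lambda)\,d\lambda\approx_{\kappa,p}e^{\kappa^p r^p/p}$.

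For the scaling step, using $q/(q-1)=p$ and $(p-1)q=p$, substitute $\lambda=\lambda_\ast s$ with $\lambda_\ast:=\kappa^p r^{p-1}$ and write $N:=\kappa^p r^p$ and $h(s):=s-s^q/q$. Then a direct calculation (together with the identity $\lambda_\ast^{q/2}=\kappa^{q/2}N^{1/2}$, which is exactly $1/p+1/q=1$) gives
\[
\int_0^{\infty} F(\lambda)\,d\lambda=\kappa^{q/2}N^{1/2}e^{N/p}\int_0^{\infty} e^{N(h(s)-1/p)}s^{(q-2)/2}\,ds ,
\]
since $h(1)=1-1/q=1/p$ is the maximum of $h$. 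As $e^{N/p}=e^{\kappa^p r^p/p}$, it remains to show that $J(N):=N^{1/2}\int_0^{\infty} e^{N(h(s)-1/p)}s^{(q-2)/2}\,ds$ is bounded above and below by positive constants depending only on $q$, \emph{uniformly in $N\in(0,\infty)$}.

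The estimate of $J$ rests on elementary properties of $h$: it is increasing on $[0,1]$ and decreasing on $[1,\infty)$; by Taylor expansion at $s=1$ ($h'(1)=0$, $h''(1)=-(q-1)$) together with compactness, $h(s)-1/p\le -c_1(s-1)^2$ on $[0,s_\ast]$ and $h(s)-1/p\ge -C_1(s-1)^2$ on $[\tfrac12,\tfrac32]$ for suitable $c_1,C_1>0$ depending on $q$, where $s_\ast:=(2q)^{1/(q-1)}>1$; and $h(s)\le -s^q/(2q)$ for $s\ge s_\ast$. For the upper bound, split the $s$-integral at $s_\ast$: on $[0,s_\ast]$ the quadratic bound yields $\int_0^{s_\ast}\le s_\ast^{(q-2)/2}\int_{\bR}e^{-c_1Nt^2}\,dt=CN^{-1/2}$, and on $[s_\ast,\infty)$ the bound $h(s)-1/p\le -s^q/(2q)$ and the substitution $u=Ns^q/(2q)$ give $\int_{s_\ast}^{\infty}\le CN^{-1/2}$; hence $J(N)\le C(q)$. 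For the lower bound, when $N\ge 1$ restrict to $[\tfrac12,\tfrac32]$ and use $h(s)-1/p\ge -C_1(s-1)^2$ to get $\int_0^{\infty}\ge c\int_{-1/2}^{1/2}e^{-C_1Nt^2}\,dt\ge c'N^{-1/2}$; when $0<N<1$ use $h(s)\ge -s^q/q$ and again $u=Ns^q/q$ to get $\int_0^{\infty}\ge e^{-N/p}\int_0^{\infty}e^{-Ns^q/q}s^{(q-2)/2}\,ds=c(q)e^{-N/p}N^{-1/2}\ge c'(q)N^{-1/2}$. In both cases $J(N)\ge c(q)>0$, and combining the three steps proves the lemma.

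The main difficulty is the uniformity of the constants in $J(N)$ all the way down to $N\to0^+$ (equivalently, for all $r>0$, including very small $r$). In that regime the mass of the integral no longer concentrates near the interior critical point $s=1$ but spreads out over $[0,\infty)$; one must see that it nevertheless scales like $N^{-1/2}$, which here is produced by the Gaussian-type decay $e^{-Ns^q/q}$ of the tail rather than by Laplace's method at an interior maximum. Everything else is routine calculus, the algebraic identities $q/(q-1)=p$, $(p-1)q=p$, $1/p+1/q=1$ (which are exactly what make $h(1)=1/p$, so that the sharp exponent $\kappa^p r^p/p$ appears — this is the Young-inequality content in disguise), and the convexity bookkeeping in the first step.
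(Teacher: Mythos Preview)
Your proof is correct. The reduction from $\int_{\lambda_0}^\infty$ to $\int_0^\infty$ via the log-concavity of $F$ and the observation that the hypothesis on $\kappa$ forces $F$ to be increasing on $[0,2\lambda_0]$ is exactly the argument the paper uses. Where you diverge is in the core estimate $\int_0^\infty F(\lambda)\,d\lambda\approx_{\kappa,p}e^{\kappa^p r^p/p}$: the paper obtains this by quoting Proposition~1 of \cite{EKPV_JLMS} (which gives $\int_{\bR} e^{\lambda r-|\lambda|^q/q}|\lambda|^{(q-2)/2}\,d\lambda\approx_p e^{r^p/p}$) and then performing the trivial restriction to $\bR_+$ and the scaling $\lambda\mapsto\lambda\kappa$. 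You instead give a self-contained proof: the substitution $\lambda=\kappa^p r^{p-1}s$ reduces everything to the uniform two-sided bound on $J(N)$, which you establish by a Laplace-method argument near $s=1$ for $N$ large and by a direct tail computation for $N$ small. The paper's route is shorter on the page because the analytic work is outsourced to the citation; your route is longer but makes the paper independent of \cite{EKPV_JLMS} for this lemma and, in particular, makes transparent why the equivalence constant is of order $\kappa^{q/2}$ (which also appears in the paper's proof after the change of variables). Both arguments ultimately hinge on the same Young-inequality identity $\max_{s>0}(s-s^q/q)=1/p$, so the difference is one of presentation rather than of idea.
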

\begin{proof}[Proof of Lemma \ref{lem1.Appendix}]
By Proposition 1 in \cite{EKPV_JLMS} (see also the comments after Proposition 1 in \cite{EKPV_JLMS}), for $p\in (1,2)$, $q$ conjugate exponent of $p$, and $r>0$,   \begin{equation}
    \int_{\mathbb{R}} \underbrace{e^{\lambda r-\frac{|\lambda|^q}{q}}|\lambda|^{\frac{q-2}{2}}}_{I_q(r,\lambda)} \, d\lambda\approx_p e^{\frac{ r^p}{p}},
\end{equation}
where the constant realizing the equivalence $\approx_p$ is  $C=C(p)>1$.
Now, since 
\begin{align}
\int_{\mathbb{R}_-}I_q(r,\lambda) \, d\lambda\leq \int_{\mathbb{R}_+}I_q(r,\lambda) \, d\lambda  ,
\end{align}
we have
\begin{equation*}
   \int_{\mathbb{R}_+}I_q(r,\lambda) \, d\lambda \approx_p  e^{\frac{ r^p}{p}}.
\end{equation*}
Consequently, for any given $\kappa>0$, the change of variable $\lambda'= \lambda\kappa$ yields
\begin{equation*}
   e^{\frac{\kappa^p r^p}{p}}\approx_p \int_{\mathbb{R}_+}I_q(\kappa r,\lambda) \, d\lambda = \kappa^{-1} \int_{\mathbb{R}_+} I_q(\kappa r,\lambda'/\kappa) \, d\lambda'=\kappa^{-q/2}\int_{\mathbb{R}_+} e^{\lambda' r-\frac{\lambda'^q}{q\kappa^q}}\lambda'^{\frac{q-2}{2}} \, d\lambda',
\end{equation*}
that is
\begin{equation*}
    \int_{0}^{+\infty} e^{\lambda r-\frac{\lambda^q}{q\kappa^q}}\lambda^{\frac{q-2}{2}} \, d\lambda\approx_{\kappa,p}e^{\frac{\kappa^p r^p}{p}},
\end{equation*}
where the constant realizing the equivalence is $C(p)\kappa^{q/2}$ with $C(p)>1$.

To conclude the proof we just have to show that for any fixed $\lambda_0>0$
\begin{equation}\label{pf.lem1.Appendix.eq1}
    \int_{0}^{+\infty} e^{\lambda r-\frac{\lambda^q}{q\kappa^q}}\lambda^{\frac{q-2}{2}} \, d\lambda\approx
    \int_{\lambda_0}^{+\infty} e^{\lambda r-\frac{\lambda^q}{q\kappa^q}}\lambda^{\frac{q-2}{2}} \, d\lambda.
    \end{equation}
    Since the relation $LHS$ of \eqref{pf.lem1.Appendix.eq1} $\geq RHS$ of \eqref{pf.lem1.Appendix.eq1} is trivial, we are then left to show
\begin{equation}\label{pf.lem1.Appendix.eq2}
    LHS \text{ of \eqref{pf.lem1.Appendix.eq1}} \lesssim RHS  \text{ of \eqref{pf.lem1.Appendix.eq1}}
    \end{equation}
We now denote by 
\begin{align}
\tilde{I}_{q,r,\kappa}(\lambda):= e^{\lambda r-\frac{\lambda^q}{q\kappa^q}}\lambda^{\frac{q-2}{2}}.
\end{align}
From
\begin{equation*}
    \frac{d}{d\lambda}\tilde{I}_{q,r,\kappa} = ( r - \frac{\lambda^{q-1}}{\kappa^q} + \frac{q-2}{2}\lambda^{-1} ) \tilde{I}_{q,r,\kappa},
\end{equation*}
it is not hard to see that for every fixed $\lambda_0>0$ and every $\kappa \geq 2\lambda_0 \left(\frac{2}{q-2}\right)^{1/q}$ (recall that $p\in (1,2)$, therefore $q>2$), the function $\tilde{I}_{q,r,\kappa}$ (as a function of $\lambda$) is increasing in $(0,2\lambda_0)$. Hence,
\begin{align*}
    LHS \text{ of \eqref{pf.lem1.Appendix.eq1}}&= \int_{0}^{\lambda_0} \tilde{I}_{q,r,\kappa}(\lambda)  \, d\lambda+\int_{\lambda_0}^\infty \tilde{I}_{q,r,\kappa}(\lambda)  \, d\lambda\\
    &\leq \int_{\lambda_0}^{2\lambda_0} \tilde{I}_{q,r,\kappa}(\lambda)  \, d\lambda+\int_{\lambda_0}^\infty \tilde{I}_{q,r,\kappa}(\lambda)  \, d\lambda\\
    &\leq 2 \, RHS  \text{ of \eqref{pf.lem1.Appendix.eq1}},
\end{align*}
which proves \eqref{pf.lem1.Appendix.eq2} and concludes the proof of Lemma \ref{lem1.Appendix}. 
    \end{proof}
\section{Proof of Lemma \ref{lem-220515-0957}}\label{sec AppB}

\begin{proof}[Proof of Lemma \ref{lem-220515-0957}]
	Let $g(x)=(4r^2-|x|^2)f(x)$. Then,
	\begin{equation*}
	g \in H^1(B_{2r}),\quad g=0\,\,\text{on}\,\,\p B_{2r}.
	\end{equation*}
	Applying the Poincar\'e inequality to $g$ on $B_{2r}$, we obtain
	\begin{align*}
	\int_{B_{2r}} |g|^2 \, dx
	&\leq
	C r^2 \int_{B_{2r}} |\nabla g|^2 \, dx
	=
	C r^2 \int_{B_{2r}} |\nabla((4r^2-|x|^2)f)|^2 \, dx
	\\&=
	4C r^2 \int_{B_{2r}} |xf|^2  \, dx + C r^2 \int_{B_{2r}} (4r^2 - |x|^2)^2 |\nabla f|^2 \, dx
	\leq
	4C r^2 \int_{B_{2r}} |xf|^2  \, dx + 16C r^6 \int_{B_{2r}} |\nabla f|^2 \, dx.
	\end{align*}
	Since $4r^2 - |x|^2\geq 3r^2$ for $x\in B_r$,
	\begin{equation*}
	\int_{B_{2r}} |g|^2 \, dx = \int_{B_{2r}} |(4r^2-|x|^2)f(x)|^2 \, dx \geq 9 r^4 \int_{B_r} |f|^2 \, dx.
	\end{equation*}
	Combining the estimates above, Lemma \ref{lem-220515-0957} is proved.
\end{proof}

\section{Proof of Lemma \ref{lem-220706-1211}}\label{sec AppC}

\begin{proof}[Proof of Lemma \ref{lem-220706-1211}]
	From the semigroup property (cf. \cite{MR710486}), we rewrite
	\begin{equation*}
		u (t) = e^{t (a + ib) (\cL + V)} u_0 = e^{a \delta/2 (\cL + V)} e^{(a \delta/2 + ib \delta) (\cL + V)} e^{(t - \delta) (a + ib) (\cL + V)} u_0,
	\end{equation*}
	where $\delta \in (0,t)$ is a small enough number to be chosen later. From \eqref{eqn-220706-1234}, we have 
	\begin{equation*}
		e^{\beta |x|^2} e^{(t - \delta) (a + ib) (\cL + V)} u_0 \in L^2_x.
	\end{equation*}
	Now we apply Lemma \ref{lem-220705-1134} with $\gamma = \beta$, $u(0)$ being replaced by $e^{(t - \delta) (a + ib) (\cL + V)} u_0$, and $g = 0$. By choosing $\delta >0$ small enough according to $\epsi$, we have
	\begin{equation*}
		\| e^{(\beta - \epsi/2) |x|^2} (e^{a s (\cL + V)} e^{(a \delta/2 + ib \delta) (\cL + V)}) e^{(t - \delta) (a + ib) (\cL + V)} u_0 \|_{L^2_x} \leq C \quad \forall s\in (0 , \delta).
	\end{equation*}
	Now the conclusion follows from the standard parabolic regularity theory for the operator
	\begin{equation*}
		\p_t - a (\cL + V).
	\end{equation*}
\end{proof}

\bibliographystyle{plain}
\bibliography{biblio}

\end{document}